\theoremstyle{plain}
\newtheorem{theorem}{Theorem}[section]
\newtheorem{prop}[theorem]{Proposition}
\newtheorem{lemma}[theorem]{Lemma}
\newtheorem{corollary}[theorem]{Corollary}
\theoremstyle{definition}
\newtheorem{definition}[theorem]{Definition}
\theoremstyle{remark}
\newtheorem{remark}[theorem]{Remark}
\newtheorem{example}[theorem]{Example}
\numberwithin{equation}{section}
\def\@setcopyright{}
\def\serieslogo@{}
\begin{document}


\author{Tobias Pecher}
\address{Emmy-Noether-Zentrum, Department Mathematik, Bismarckstrasse 1 $\!\!\frac{1}{2}$, 91054 Erlangen, Germany}
\email{pecher@mi.uni-erlangen.de}

\title[Skew multiplicity-free Modules]{Classification of Skew Multiplicity-free Modules}

\begin{abstract}
Let $G$ be a reductive group defined over $\CC$ with a finite dimensional representation $V$. The action of $G$ is said to be skew multiplicity-free (SMF) if the exterior algebra $\bigwedge V$ contains no irreducible representation of $G$ with multiplicity $> 1$. This paper is concerned with a classification of such representations.  
\end{abstract}




\date{\today}

\maketitle

\section{Introduction}

The notion of a skew multiplicity-free (SMF) action was introduced in \cite{Ho1}, where a main aim is the interpretation of invariant theory in terms of (arbitrary) multiplicity-free actions. Classic results, like the first and second fundamental theorems of invariant theory for example, can be traced back to such actions. In this setting, SMF spaces can be regarded as a model to describe ``skew'' invariants - for instance the algebra structure of $\bigwedge(\CC^n \otimes \CC^l \oplus (\CC^n)^* \otimes \CC^m)^{\GL_n}$.

SMF spaces naturally appear as the cohomology of nilradicals $\mathfrak{n}$ of semisimple Lie algebras. It is shown in~\cite{Ko} that the cohomology groups decompose multiplicity-free under the action of the Levi subalgebra. In the cases where $\mathfrak{n}$ is abelian or a Heisenberg Lie algebra the cohomology can be computed via $\bigwedge \mathfrak{n}$. 

A distinguishing fact for SMF spaces is that the algebra of $G$-invariant differential operators $\mathcal{PD}(V)^G$ on such a $V$ admits a canonical basis. Thus it is possible to formulate analogs of the famous Capelli identity, resp. Capelli operators in this setting. Some particular results on the spectral functions on those spaces are carried out in~\cite{We}.

SMF modules are skew-symmetric counterparts of multiplicity-free spaces, i.e. those for which $S(V)$ has no multiplicities. These spaces are well-investigated: They are classified due to~\cite{Ka}, \cite{BR} and~\cite{Lea} and can be characterized as those spaces that admit a dense orbit under a Borel subgroup of $G$ (i.e. they are {\it spherical}). Furthermore, one knows that a representation of a group $G$ is multiplicity-free if and only if this holds for the identity component of $G$, see e.g.~\cite{BJLR}. Unfortunately, an analogous result for exterior algebras is wrong. For instance, the action of $\CC^*\times\SL_3\times\OG_{2n}$ on $\CC^3\otimes\CC^{2n}$ is SMF while for $\CC^*\times\SL_3\times\SO_{2n}$ this is false. 

A classification of SMF modules in the case of simple $G$ and irreducible $V$ can be found in~\cite{Ho1}. An attempt towards finding a more uniform classification argument was made in \cite{St3}. Our aim is to extend these results to arbitrary
reductive groups and arbitrary (finite dimensional) representations. However, we will restrict ourselves to the subclass of so-called {\it saturated} representations, i.e. those with a sufficient number of torus factors. Also will make no attempt to classify SMF representations of arbitrary (nonconnected) groups but we will give some important examples in this case.
Finding all SMF representations for these groups is a nontrivial task and also a classification of SMF representations for finite groups may be interesting in its own right.

We give a short overview of the article: In Section~\ref{sec:basics}, basic definitions together with a necessary condition on skew multiplicity-free modules are recalled, as well as the classification of simple SMF modules. Furthermore, we generalize a necessary condition on skew multiplicity-free modules.

Section~\ref{sec:dual} deals with centralizers of reductive group actions. Here, we give a characterization of SMF modules $V$ in terms of Clifford algebras: They are exactly those spaces for which $Cl(V\oplus V^*)^G$ is commutative (Proposition~\ref{prop:clifford}). We also introduce some well-known instances of dual reductive pairs on exterior algebras. As we learned from the referee, they play an important role in the classification of SMF modules.

The irreducible SMF representations are treated in Section~\ref{sec:irred}. These actions will be mainly traced back to an instance of a dual reductive pair together with well-known multiplicity-free decomposition rules.

Section~\ref{sec:reducible} is concerned with the reducible case. Here, there arise some technical difficulties, why we will consider only saturated actions. We prove that a representation $V_1\oplus V_2$ is SMF if and only if $V_1\oplus V_2^*$ has this property by using the characterization given in Proposition~\ref{prop:clifford}. In Proposition~\ref{prop:subgraph} we give some sort of a monotonicity-criterion for SMF spaces that reduces the amount of necessary calculations tremendously.

A discussion of nonsaturated actions is held in Section~\ref{sec:non_saturated}. Here, we do not give a real classification but rather a recipe for finding an arbitrary SMF module.

Unfortunately, the lack of geometric descriptions for SMF spaces makes some case-by-case calculations inevitable in both the irreducible and the reducible case. For a better readability this is done in Appendix~\ref{app:irred}, resp.~\ref{app:reducible}. Although much of primary calculations in this paper had been done by computer algebra systems LiE and SCHUR, all results occurring in the tables can be easily verified by hand.

An a posteriori observation of the classification is that (considering only the infinite series of SMF spaces) there is a strong correspondence to the series of usual multiplicity-free representations via the Langlands dual group. Thus, it may be worthwhile to ask if one could formulate a skew analog of the spherical criterion.

\section{Preliminaries} \label{sec:basics}

Consider a reductive group $G$ acting on a complex (finite dimensional) vector space $V$. 

\begin{definition} \label{def:SMF}
We say that $(G,V)$ is {\it skew multiplicity-free (SMF)} if the exterior algebra $\bigwedge V$ is a multiplicity-free $G$ module, i.e. if $\dim\Hom_G(\Gamma, \bigwedge V) \leq 1$ for all irreducible representations $\Gamma$ of $G$.
\end{definition}

Assume that $V = V_1\oplus\dots V_l$ is a decomposition into irreducible subspaces for $G$. We say that the action is {\it saturated}, if $G$ is of the form 

\begin{equation} \label{eqn:saturated}
G = (\CC^*)^l \times G_1 \times\dots\times G_k = (\CC^*)^l \times [G,G],
\end{equation}
where $G_i$ is a simple group and the $j$th torus factor acts by scalar multiplication on $V_j$ and trivially on all other irreducible summands $V_i$.

\begin{remark} \label{rem:saturated}
Note that for a saturated $G$ action on $V$ the SMF property is equivalent to the fact that for all integers $0 \leq r_j \leq \dim(V_j)$ the subspaces

\[ \bigwedge^{r_1} V_1 \otimes \dots \otimes \bigwedge^{r_l} V_l \]
of $\bigwedge V$ are multiplicity-free as a $[G,G]$ module. In particular, if $V$ is irreducible this means that it suffices to show that each power $\bigwedge^k V$ is multiplicity-free for the semisimple part of $G$. 

Obviously, a SMF module $(G,V)$ may lose this property after a removal of torus factors of $G$. In fact, if $G$ is connected semisimple then $(G,V)$ is never SMF since both $\bigwedge^0 V$ and $\bigwedge^{\dim(V)}V$ give the trivial representation of $G = [G,G]$.
\end{remark}

\begin{remark}
Our classification actually concerns pairs $(G,V)$ with $G$ semisimple and $V$ finite dimensional, such that $G \subseteq \GL(V)$ and $G$ acting multiplicity-free on $\bigwedge V$. If $G' \rightarrow G$ is a surjective homomorphism, then the representation $(G', V)$ is also SMF. Nevertheless, the automorphism group induced by $G'$ and $G$ in $\Aut(\bigwedge V)$ are the same.
\end{remark}

\begin{definition} \label{def:geom_equiv}
Two representations $(G,\rho,V)$ and $(G',\rho',V')$ are said to be {\itshape geometrically equivalent} if there exists an isomorphism $\psi:V \stackrel{\sim}{\longrightarrow} V'$ such that for the induced isomorphism  $\GL(\psi): \GL(V) \rightarrow \GL(V')$ one has $\GL(\psi)(\rho(G))=\rho'(G')$. 

We write  $(G,\rho,V) \sim (G',\rho',V')$ for a pair of geometrically equivalent representations or, if the underlying homomorphisms are obvious, $(G,V)\sim (G',V')$.
\end{definition}

Note that $(G,V)$ and $(G',V)$ are geometrically equivalent whenever $G' \rightarrow G$ is a surjective homomorphism. 
For example, let $T$ be a maximal torus inside $G$ and $w_0$ be the longest element in the Weyl group of $G$. Then the automorphism on the character group $X(T)$ that sends $\lambda$ to $-w_0(\lambda)$ determines an automorphism $\varphi$ of $G$, such that the pullback of any representation $(G,V)$ via $\varphi$ yields the dual representation $(G,V^*)$. Moreover, any two representations of $G$ that differ by an automorphism of the Dynkin diagram of the semisimple part of $G$ are geometrically equivalent. Note also that this concept is compatible with products, i.e. for $(G,V)\sim (G',V')$ and $(H,W)\sim(H',W')$ we also have $(G\times H, V\otimes W)\sim(G'\times H',V'\otimes W')$.

By the above we can assume that $G$ is given by a product of a number of $\CC^*$-factors together with a semisimple group since any reductive group can be covered that way. 

\begin{remark}
Some notational conventions for representations: The irreducible representation of a reductive group $G$ of highest weight $\omega_1$, where $\omega_1$ is the first fundamental weight, is also denoted by $G$. (For a classical group $G \subseteq \GL_n\CC$ this is always the natural representation on $\CC^n$). Similar notations are used for plethysms, e.g $\bigwedge^2 G$, which stands for the action on the second exterior power of $G$. 
The spin representations of $\Spin_n\CC$ are denoted by $\Delta_n$. (For $n$ even $\Delta_n = \Delta_n^+ \oplus \Delta_n^-$ decomposes into the irreducible half-spin representations.)
\end{remark}

Let $G = \CC^* \times [G,G]$ with $[G,G]$ being simple and connected. We are now ready to state the classification of irreducible modules for these groups $G$. The labels in the list serve as a reference for later purposes.

\begin{theorem} \label{thm:howeslist}

Let $G$ be as above. The following list exhausts all irreducible SMF representations up to geometric equivalence.

\[ \begin{array}{|l l|l l|l l|} \hline
(A1^n) & \SL_n~(n \geq 2)           & (B1^n) & \SO_{2n+1}~(n\geq 3) & (C1^n) & \Sp_{2n}~(n\geq 2) \\
(A2^n) & S^2\SL_n~(n\geq 2)         & (B2)   & \Delta_7             & (C2)   & \bigwedge^2_0\Sp_4 \\
(Ak)   & S^k\SL_2~(k=3,\dots,6)     & (B3)   & \Delta_9             & (C3)   & \bigwedge^3_0\Sp_6 \\
(A7)   & S^3\SL_3                   & (D1^n) & \SO_{2n} ~(n \geq 4) & (G)    & G_2                \\
(A8^n) & \bigwedge^2\SL_n~(n\geq 4) & (D2)   & \Delta_{10}^+        & (E6)   & E_6                \\
(A9)   & \bigwedge^3\SL_6           & (D3)   & \Delta_{12}^+        & (E7)   & E_7                \\ \hline
\end{array} \]

\end{theorem}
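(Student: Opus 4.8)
The plan is to reduce the statement, via Remark~\ref{rem:saturated}, to a purely semisimple question. Since $V$ is irreducible and the action is saturated (a single central $\CC^*$ scaling $V$), the module $\bigwedge V$ is multiplicity-free for $G$ if and only if every graded piece $\bigwedge^k V$ is multiplicity-free for $G' = [G,G]$, because the central torus separates the distinct degrees $k$. So the task becomes: determine all irreducible representations $V$ of a connected simple group $G'$ for which $\bigwedge^k V$ is multiplicity-free for every $k$, and show that up to geometric equivalence (Definition~\ref{def:geom_equiv}, which lets me identify $V$ with $V^*$ and absorb Dynkin-diagram automorphisms) these are exactly the entries of the table. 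I would prove the two directions separately: first, that each listed $V$ is SMF, and second, that no other $V$ is.

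For the first direction I would treat the infinite families through instances of dual reductive pairs on exterior algebras (Section~\ref{sec:dual}, Section~\ref{sec:irred}) and the attendant multiplicity-free decomposition rules. The decompositions of $\bigwedge(\CC^N \otimes \CC^m)$ under the dual pairs $(\GL_N,\GL_m)$, $(\OG_N,\Sp_{2m})$ and $(\Sp_{2N},\OG_m)$ are known to be multiplicity-free; specializing $m=1$ yields the natural-representation cases $(A1^n)$, $(B1^n)$, $(C1^n)$, $(D1^n)$, for which in fact each $\bigwedge^k$ is either a single fundamental module or a sum of distinct primitive pieces. The families $(A2^n)$ and $(A8^n)$ would follow from the corresponding multiplicity-free decompositions of $\bigwedge(S^2\CC^n)$ and $\bigwedge(\bigwedge^2\CC^n)$. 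The remaining spin, exceptional and low-rank sporadic entries $(B2)$--$(B3)$, $(C2)$--$(C3)$, $(D2)$--$(D3)$, $(G)$, $(E6)$, $(E7)$ and the $\SL_2,\SL_3,\SL_6$ cases I would verify by explicitly decomposing each $\bigwedge^k V$; these are small enough to do by hand, and I would collect them in Appendix~\ref{app:irred}.

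The second direction is where the real work lies. Here I would invoke the necessary condition recalled and generalized in Section~\ref{sec:basics}. Its numerical core is the observation that, when $(G,V)$ is SMF, $\dim(\bigwedge V\otimes\bigwedge V^*)^G$ equals the number of irreducible constituents of $\bigwedge V$ (equivalently, $Cl(V\oplus V^*)^G$ is commutative, Proposition~\ref{prop:clifford}); comparing this count against dimension growth forces the highest weight of $V$ to be severely constrained, so that in each type only a few fundamental weights, low multiples thereof, and a handful of sporadic weights can survive — a large representation already produces a repeated constituent in some middle power $\bigwedge^k V$. This leaves finitely many families and finitely many sporadic candidates; I would then eliminate every candidate not on the list by exhibiting a concrete $k$ and a $G'$-irreducible occurring in $\bigwedge^k V$ with multiplicity $\geq 2$, again deferring these computations to Appendix~\ref{app:irred}. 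Geometric equivalence is used throughout to avoid listing $V$ and $V^*$ (and their images under diagram automorphisms) twice.

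The main obstacle is that, unlike the multiplicity-free classification of $S(V)$, no geometric (sphericity) criterion is available for the SMF property, so completeness in the second direction cannot be read off from a dense-orbit condition and must instead rest on the numerical necessary condition together with case-by-case elimination. A secondary difficulty is that the positive verifications for the spin and exceptional entries do not descend from a single uniform branching rule and require either an ad hoc exceptional dual pair or a direct character computation. Finally, one must keep the bookkeeping of geometric equivalence precise, since it is exactly what makes the stated list finite.
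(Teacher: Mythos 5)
The first thing to note is that the paper does not prove this theorem at all: its entire proof is the citation ``See~\cite[Theorem 4.7.1]{Ho1}'', i.e. the classification for simple $[G,G]$ is imported from Howe's work as known input, and the paper's own contributions (Sections~\ref{sec:irred}, \ref{sec:reducible} and the appendices) are built on top of it. So your attempt to reconstruct the classification from scratch is by construction a different route from the paper's. Judged on its own terms, your outline has the right overall shape — reduction to $[G,G]$ via the grading (Remark~\ref{rem:saturated}), dual pairs and explicit decompositions for sufficiency, a necessary condition plus case-by-case elimination for exhaustion — and this is roughly how the argument in~\cite{Ho1} actually runs. But it falls short of a proof precisely in the step that carries all the content.

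Concretely, the gap is in your second direction. You assert that comparing $\dim(\bigwedge V\otimes\bigwedge V^*)^G$ with ``dimension growth'' forces the highest weight of $V$ to be severely constrained, leaving finitely many candidates. No such bound is derived: you do not say what grows, at what rate, or how the comparison eliminates, say, $S^3\SL_4$ or $\Delta_{11}$ or the adjoint representation of $E_8$, while keeping $\bigwedge^3\SL_6$ and $\Delta_{12}^+$. For a classification theorem this finiteness-and-elimination step \emph{is} the theorem; everything else is bookkeeping. The tool that actually makes such an induction run is the Levi restriction lemma (Lemma~\ref{lemm:levi}, generalizing Howe's): once a representation indexed by a partition $\lambda$ fails to be SMF in rank $n$, it fails in all higher ranks, which is what reduces each type to finitely many explicit checks. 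Your sketch never invokes it, and without it (or an explicit numerical bound, which you do not supply) the claim that only ``finitely many families and finitely many sporadic candidates'' remain is unsupported. A secondary error: the dual pairs you name for the orthogonal and symplectic natural representations, $(\OG_N,\Sp_{2m})$ and $(\Sp_{2N},\OG_m)$, are the pairs acting on the \emph{symmetric} algebra $S(\CC^N\otimes\CC^m)$; on the exterior algebra the correct pairs are $(\OG_N,\mathfrak{so}_{2m})$ and $(\Sp_{2N},\mathfrak{sp}_{2m})$ (cf. Proposition~\ref{prop:duality_construction}). This slip does not derail the sufficiency direction — for $m=1$ the natural representations can be checked directly — but it indicates that the mechanism you plan to rely on is not the one that exists.
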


\begin{proof} See~\cite[Theorem 4.7.1]{Ho1}.
\end{proof}

Note that the above remarks about geometrically equivalent representations imply that also $\GL_n$, $\bigwedge^2\GL_n$, $S^2\GL_n$, etc. are SMF. Note also that e.g. $\bigwedge^2_0\Sp_4 \sim \SO_5$ and $\bigwedge^2\SL_4 \sim \SO_6$ by the exceptional isomorphisms. \\

In~\cite{Ho1} it is also proved that the restriction of an irreducible SMF module $(G,V)$ to a Levi subgroup $L$ of $G$ remains SMF. This fact can also be generalized to reducible modules.

\begin{lemma} \label{lemm:levi}
Let $V = \bigoplus_{i=1}^n V_i$ be a decomposition of a $G$ module into irreducible subspaces and denote by $P=LU$ a parabolic subgroup of $G$, where $L$ is a Levi subgroup and $U$ unipotent. If $V$ is SMF then for any choice of irreducible $L$ modules $W_i \subseteq {\rm Res}^G_L(V_i)$ the direct sum \[ W:= \bigoplus_{i=1}^n W_i, \] must also be SMF. 
\end{lemma}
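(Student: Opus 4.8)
The plan is to reduce the statement to the clean fact that the $\mathfrak{u}$-invariants of $\bigwedge V$ form a multiplicity-free $L$-module, and then to exhibit $\bigwedge W$ inside these invariants. Fix a cocharacter $\lambda\colon\CC^*\to G$ with $L=Z_G(\lambda)$ and $P=LU$, so that $\mathfrak{g}=\mathfrak{u}^-\oplus\mathfrak{l}\oplus\mathfrak{u}$ is the $\lambda$-weight decomposition with $\mathfrak{u}$ the sum of the strictly positive weight spaces. Writing ${\rm Res}^G_L$ aside for a moment and decomposing $\bigwedge V=\bigoplus_\nu\Gamma_\nu$ with each $G$-irreducible $\Gamma_\nu$ occurring at most once (this is the SMF hypothesis), Kostant's theorem identifies the degree-zero Lie algebra cohomology $H^0(\mathfrak{u},\Gamma_\nu)=\Gamma_\nu^{\mathfrak{u}}$ with the single irreducible $L$-module $\sigma_\nu$ of highest weight $\nu$. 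Since $\nu$ is recovered from $\sigma_\nu$, distinct $\nu$ give distinct $L$-modules, so $(\bigwedge V)^{\mathfrak{u}}=\bigoplus_\nu\sigma_\nu$ is multiplicity-free as an $L$-module; the same holds for the opposite parabolic, i.e. for $(\bigwedge V)^{\mathfrak{u}^-}$.

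Next I would use the tensor factorisation. Splitting each $V_i=W_i\oplus V_i'$ into the chosen constituent and its complement and restricting to $L$ gives
\[
{\rm Res}^G_L\textstyle\bigwedge V=\Big(\bigotimes_i\bigwedge W_i\Big)\otimes\Big(\bigotimes_i\bigwedge V_i'\Big)=\bigwedge W\otimes\bigwedge V',
\]
so $\bigwedge W$ is an $L$-direct summand of ${\rm Res}^G_L\bigwedge V$ (take degree $0$ in $\bigwedge V'$). If each $W_i$ is the constituent of $V_i$ of highest $\lambda$-weight, then $\mathfrak{u}\cdot W_i=0$, hence $\mathfrak{u}$ annihilates $\bigwedge W$ (it acts by derivations), so $\bigwedge W\subseteq(\bigwedge V)^{\mathfrak{u}}$. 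Being a submodule of a multiplicity-free module, $\bigwedge W$ is then multiplicity-free and $(L,W)$ is SMF. The lowest-$\lambda$-weight constituents are handled identically using $\mathfrak{u}^-$ and the opposite parabolic, which has the same Levi $L$.

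To pass from these extremal choices towards an arbitrary one, I would invoke transitivity of the construction: for an intermediate Levi $L\subseteq M\subseteq G$ one may first restrict $(G,V)$ to $M$, choosing an irreducible $M$-constituent $W_i^M\supseteq W_i$, and then restrict $(M,\bigoplus_iW_i^M)$ to $L$. Iterating through a chain of maximal parabolics (removing one simple root at a time) and taking an extremal constituent at each stage realises $W$ as an \emph{iterated extremal} choice, to which the previous paragraph applies step by step; this already disposes of a large class of constituents and lets one argue by induction on $\dim G-\dim L$.

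The main obstacle is that not every irreducible $L$-constituent is extremal, nor even iterated-extremal: a constituent whose central $Z(L)^\circ$-character lies in the interior of the weight polytope of $V_i$ (for instance one supported on a zero weight) is never of highest $\lambda$-weight for any parabolic with Levi $L$, so the embedding into $\mathfrak{u}$-invariants is unavailable. The point where the argument must do real work is exactly here, and this is where the saturation hypothesis is essential: because each $V_i$ is scaled by its own torus factor, the grading of $\bigwedge W$ by the multidegree $(r_1,\dots,r_l)$ is detected by the central torus of $L$, so by Remark~\ref{rem:saturated} it suffices to prove multiplicity-freeness of each $\bigwedge^{r_1}W_1\otimes\dots\otimes\bigwedge^{r_l}W_l$ for $[L,L]$ separately. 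Within such a fixed multidegree the residual comparison can be made against the $[G,G]$-multiplicity-freeness of $\bigwedge^{r_1}V_1\otimes\dots\otimes\bigwedge^{r_l}V_l$, whose restriction contains $\bigwedge^{r_1}W_1\otimes\dots\otimes\bigwedge^{r_l}W_l$ as the pure diagonal summand. Carrying out this bookkeeping — matching any would-be repeated $[L,L]$-constituent of the diagonal piece to a repetition forced in the ambient $G$-module — is the delicate step that I expect to absorb most of the effort, and it is precisely the place where the interior constituents are brought under control.
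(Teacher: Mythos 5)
Your Kostant-theorem paragraph is correct, but it proves the lemma only for the extremal choices (those $W_i$ with $\mathfrak{u}\cdot W_i=0$) and, via the chain of intermediate Levis, for iterated-extremal ones. You identify the remaining obstacle yourself — interior constituents, such as a $W_i$ supported on the zero $\lambda$-weight — but what you offer for them is not a proof. The appeal to saturation is unavailable: the lemma carries no saturation hypothesis and is used in the paper for arbitrary SMF pairs $(G,V)$, so an argument that needs saturation proves a strictly weaker statement. Worse, the concluding ``bookkeeping'' step is circular: knowing that $\bigwedge^{r_1}W_1\otimes\dots\otimes\bigwedge^{r_l}W_l$ sits inside the restriction of the multiplicity-free $[G,G]$-module $\bigwedge^{r_1}V_1\otimes\dots\otimes\bigwedge^{r_l}V_l$ gives nothing by itself, because restriction to a subgroup is exactly the operation that creates multiplicities; ``matching any would-be repeated $[L,L]$-constituent to a repetition forced in the ambient $G$-module'' is a restatement of the claim, not an argument. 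What is missing is a mechanism that converts a repeated $L$-highest weight vector in $\bigwedge W$ into a repeated $G$-highest weight vector in $\bigwedge V$, valid for \emph{every} choice of the $W_i$.

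The paper supplies precisely such a mechanism, and it makes the extremal/interior distinction irrelevant. For an arbitrary choice of $W_i$, let $\left< W_i\right>_P$ be the $P$-submodule of $V_i$ generated by $W_i$; since the center of $L$ acts on the irreducible $W_i$ by a single $\lambda$-weight and $\mathfrak{u}$ strictly raises that weight, one has $\left< W_i\right>_P = W_i\oplus W_i^+$ with $W_i^+$ a $P$-stable complement. Put $p=\sum_i\dim W_i^+$ and let $t$ span the $P$-stable line $X=\bigwedge^p\bigl(\bigoplus_i W_i^+\bigr)$, on which $B\subseteq P$ acts by a character $\lambda(b)=\det(b|_X)$. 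If $\nu\in\bigwedge^r W$ is a $B_L$-highest weight vector of weight $\varphi$, then writing $b=ul\in B$ one gets $u(\nu)=\nu+\mu$ where every term of $\mu$ contains a factor from $\bigoplus_i W_i^+$, hence $\mu\wedge t=0$ and $b(\nu\wedge t)=(\lambda+\varphi)(b)\,\nu\wedge t$. Since wedging with $t$ is injective on $\bigwedge W$, two independent $B_L$-highest weight vectors of the same weight $\varphi$ in $\bigwedge^r W$ produce two independent $B$-highest weight vectors of weight $\lambda+\varphi$ in $\bigwedge^{r+p}V$, contradicting the SMF property of $(G,V)$. Note that nowhere does one need $\mathfrak{u}W_i=0$: the complement $W_i^+$, swallowed into the fixed top wedge $t$, absorbs the entire $\mathfrak{u}$-action. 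Replacing your second and third paragraphs by this construction closes the gap (and makes the Kostant paragraph unnecessary).
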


\begin{proof} We slightly generalize the arguments of the original proof. Let $B$ the Borel subgroup of $G$ that is contained in $P$. Denote by $\left< W_i \right >_P$ the $P$-invariant subspace of $V_i$ generated by $W_i$, then we have a decomposition of vector spaces
\[ \left< W_i \right>_P = W_i \oplus W_i^+ \]
with $W_i^+$ being a $P$-invariant complement to $W_i$ in $\left< W_i \right>_P$. The subspace
\[ X:= \bigwedge^{p} \left( \bigoplus_{i=1}^n W_i^+ \right) \] 
with $p = \dim(W_1^+) + \dots + \dim(W_n^+)$ is one-dimensional. Hence, every element $b \in B \subseteq P$ acts as multiplication by $\lambda(b) := \det(b|_X)$ on $X$.

Choose any non-zero vector $t \in X$. Now take a highest weight vector $\nu \in \left( \bigwedge^r W \right)$ for $L$ of weight $\varphi$, i.e. $l(\nu) = \varphi(l) \cdot \nu$ for $l \in B_L$. We claim that $\nu \wedge t \in \bigwedge^{r + p} V$ is a highest weight vector for $G$ of weight $\lambda + \varphi$: 

Let $b\in B \subseteq P$ with $b = ul$ for $u \in U$, $l\in L$. Since $U \subseteq B$, we have $l = u^{-1}\cdot b \in B$ and so $l \in B_L = B \cap L$. Furthermore, $u(\nu) = \nu + \mu$ where $\mu$ is some element in $\bigwedge (W_1^+ \oplus\dots\oplus W_n^+)$. Thus, 
%
%
%
\begin{eqnarray*}
b(\nu\wedge t) &=& \varphi(l) \cdot u(\nu) \wedge b (t) \\
 &=& \lambda(b)\cdot \varphi(b) \cdot u(\nu) \wedge t \\
 &=& (\lambda+\varphi)(b) \cdot \nu\wedge t.
\end{eqnarray*}
This means, if the dimension of $(\bigwedge^r W)^{B_L,\varphi}$ is greater than one, the same is true for $(\bigwedge^{r + p} V)^{B,\lambda+\varphi}$. \end{proof}

\begin{corollary} \label{cor:products}
Let $G$ be a reductive group with simple factors $G_i$ together with an irreducible SMF representation $V = V_1 \otimes \dots \otimes V_k$. Then, $(G_i, V_i)$ is an irreducible SMF representation for all $1 \leq i \leq k$.
\end{corollary}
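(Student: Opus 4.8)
The plan is to deduce the statement from the Levi restriction Lemma~\ref{lemm:levi} applied to the single irreducible $G$-module $V$, by choosing a parabolic that keeps the factor $G_i$ intact while shrinking every other simple factor to a maximal torus. Throughout write $G = \CC^* \times G_1 \times \dots \times G_k$ (the central torus supplying the scalars that make $V$ saturated), so that $V = V_1 \otimes \dots \otimes V_k$ is irreducible and each $G_j$ acts only on the factor $V_j$.

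Fix an index $i$. First I would choose, inside each $G_j$ with $j \neq i$, a Borel subgroup $B_j$ with maximal torus $T_j$, and take the parabolic $P = \CC^* \times B_1 \times \dots \times G_i \times \dots \times B_k$ of $G$ (with $G_i$ in the $i$th slot). Its Levi is $L = \CC^* \times T_1 \times \dots \times G_i \times \dots \times T_k$. In each $V_j$ with $j \neq i$ pick a nonzero weight line $\CC v_j$ for $T_j$; since a weight line is automatically a $T_j$-submodule, the subspace
\[ W := \CC v_1 \otimes \dots \otimes V_i \otimes \dots \otimes \CC v_k \subseteq {\rm Res}^G_L V \]
is an irreducible $L$-submodule, and as a $G_i$-module it is isomorphic to $V_i$ (the remaining factors being one-dimensional and $G_i$-trivial).

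Next I would invoke Lemma~\ref{lemm:levi} in the degenerate case of a single summand ($n = 1$, with $V_1 = V$ and $W_1 = W$): since $V$ is SMF, so is $(L, W)$. It remains to translate this back to $G_i$. The torus part $\CC^* \times \prod_{j \neq i} T_j$ of $L$ acts on $W$ by a single character in each exterior degree, so for every $r$ the space $\bigwedge^r W$ decomposes into $L$-irreducibles exactly as $\bigwedge^r V_i$ decomposes into $G_i$-irreducibles (tensored by that character). Hence $(L, W)$ being SMF forces each $\bigwedge^r V_i$ to be multiplicity-free as a $G_i$-module, which by Remark~\ref{rem:saturated} is precisely the assertion that $(\CC^* \times G_i, V_i)$ is SMF.

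The construction is essentially bookkeeping once the right parabolic is fixed; the only point requiring care is the final compatibility check, namely that collapsing the central and peripheral tori to scalars neither merges nor splits the $G_i$-isotypic components of $\bigwedge V_i$, so that multiplicity-freeness transfers verbatim between $L$ and $G_i$. Choosing a single weight line (rather than all of $V_j$) in the peripheral factors is exactly what guarantees that this torus acts by scalars, so I expect no genuine obstacle beyond stating the reduction cleanly.
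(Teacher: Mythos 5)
Your proposal is correct and takes essentially the same route as the paper: the paper's own (two-line) proof likewise invokes Lemma~\ref{lemm:levi}, observing that $G_i$ times a torus is a Levi component of $G$ and that the restriction of $V$ to it contains a copy of $V_i$. Your version merely spells out what the paper leaves implicit --- the explicit parabolic, the weight-line construction of $W$, and the torus-character bookkeeping needed to transfer multiplicity-freeness from $L$ back to $G_i$.
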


\begin{proof}
Up to a torus, every $G_i$ is a Levi component of $G$ and the restriction of $V$ to $G_i$ contains a copy of $V_i$. \end{proof}

A further application of the above Lemma is the following: For a classical group $G$ of rank $n$ every irreducible representation $V^{(n)}_{\lambda}$ of $G$ can be indexed by a certain partition $\lambda$ (see e.g. \cite{FH}). For $n \geq 3$ there is always a parabolic subgroup of $G$ such that the semisimple part of its Levi component is isomorphic to a simple group $G'$ of rank $n-1$ and of the same type as $G$. 
If $n$ is sufficiently large (compared to the number $\ell(\lambda)$ of boxes in the first column of $\lambda$), there is always $V^{(n-1)}_{\lambda} \subseteq V^{(n)}_{\lambda}$ as a $G'$ submodule. So once we have checked that $V^{(n)}_{\lambda}$ is not SMF, the same holds for all $V^{(m)}_{\lambda}$ with $m > n$. For example, this applies to the representations $V^{(n)}_{(k)} = S^k\SL_n$. \bigskip

\section{Centralizing actions} \label{sec:dual}

Given a reductive group $G$ acting on a vector space $W$, one can consider its centralizer algebra inside $\End(W)$ consisting of those endomorphisms that commute with all operators of $G$. This centralizer algebra plays a crucial role in our considerations since the multiplicities of the irreducible subrepresentations of $G$ in $W$ are given by the dimensions of the irreducible subrepresentations of this centralizer algebra. More precisely, one has the following

\begin{theorem}[Double commutant theorem] \label{thm:double-commutant}
Let $\mathcal{A} \subseteq \End(W)$ be a semisimple algebra and let $\mathcal{B} = \mathcal{A}' = \{ b \in \End(V): ba = ab ~for~all~ a\in\mathcal{A} \}$. Then, $\mathcal{B}$ is semisimple and $\mathcal{B}' = \mathcal{A}$. Moreover, as a joint $\mathcal{A}\otimes \mathcal{B}$ module one has

\[ W = \bigoplus_i V_i \otimes U_i, \]
where the $V_i$ (resp. $U_i$) are pairwise non-isomorphic representations of $\mathcal{A}$ (resp. $\mathcal{B}$).
\end{theorem}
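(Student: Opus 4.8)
The plan is to invoke the Artin--Wedderburn structure theory for semisimple algebras over the algebraically closed field $\CC$, from which the whole statement follows by a bookkeeping argument with Schur's lemma. First I would use the semisimplicity of $\mathcal{A}$ to decompose $W$ as an $\mathcal{A}$-module into its isotypic components. Writing $V_1, \dots, V_r$ for the pairwise non-isomorphic irreducible $\mathcal{A}$-modules occurring in $W$ and setting $U_i := \Hom_{\mathcal{A}}(V_i, W)$ for the multiplicity space of $V_i$, the canonical evaluation map gives an $\mathcal{A}$-module isomorphism
\[ W \cong \bigoplus_{i=1}^r V_i \otimes U_i, \]
where $\mathcal{A}$ acts only on the first tensor factors. (Faithfulness of $\mathcal{A}\subseteq\End(W)$ guarantees that every simple factor of $\mathcal{A}$ actually appears, so these $V_i$ are all of its irreducibles relevant here.) This already produces the claimed decomposition; what remains is to identify the $U_i$ as the irreducible $\mathcal{B}$-modules and to prove $\mathcal{B}' = \mathcal{A}$.

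The second step is to compute $\mathcal{B} = \mathcal{A}' = \End_{\mathcal{A}}(W)$. Since $\CC$ is algebraically closed, Schur's lemma gives $\Hom_{\mathcal{A}}(V_i, V_j) = \delta_{ij}\CC$, so any $\mathcal{A}$-equivariant endomorphism must preserve each isotypic summand and act as the identity on the $V_i$-factor. Hence
\[ \mathcal{B} = \End_{\mathcal{A}}(W) \cong \prod_{i=1}^r \End(U_i). \]
This displays $\mathcal{B}$ as a product of full matrix algebras, so it is semisimple, and it exhibits the multiplicity spaces $U_i$ as exactly the pairwise non-isomorphic irreducible $\mathcal{B}$-modules. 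The same decomposition $W \cong \bigoplus_i V_i \otimes U_i$ is now read as a joint $\mathcal{A} \otimes \mathcal{B}$-module, with $\mathcal{B}$ acting on the second factors.

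For the double-centralizer equality I would simply run the first two steps again with the roles of $\mathcal{A}$ and $\mathcal{B}$ interchanged. Viewing $W$ as a $\mathcal{B}$-module with isotypic decomposition having the $V_i$ as multiplicity spaces, the identical Schur computation yields $\mathcal{B}' = \End_{\mathcal{B}}(W) \cong \prod_i \End(V_i)$. The inclusion $\mathcal{A} \subseteq \mathcal{B}'$ is clear, and the point is a dimension count: both sides have dimension $\sum_i (\dim V_i)^2$, and since $\mathcal{A}$ acts on each $V_i$ through the full matrix algebra $\End(V_i)$ (again by Artin--Wedderburn), this forces $\mathcal{A} = \mathcal{B}'$.

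The main obstacle is really this last equality $\mathcal{B}' = \mathcal{A}$: one must know not only that $\mathcal{A}$ centralizes $\mathcal{B}$ but that it surjects onto the full endomorphism algebra of each isotypic block, which is exactly the content of semisimplicity together with algebraic closedness. Everything else is the formal manipulation of isotypic decompositions and Schur's lemma; the only care needed is to ensure the $V_i$ are honestly pairwise non-isomorphic, so that no off-diagonal $\Hom$-spaces contribute to the centralizer.
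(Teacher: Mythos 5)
Your proof is correct and takes essentially the same route as the paper, which sketches exactly this argument (Wedderburn decomposition $\mathcal{A} \simeq \bigoplus_i \End(V_i)$, then the "key idea" of showing $\mathcal{B} \simeq \bigoplus_i \End(U_i)$) and defers the details to Goodman--Wallach. You have simply filled in what the cited reference supplies: the Schur-lemma computation of the commutant on each isotypic block and the dimension count establishing $\mathcal{B}' = \mathcal{A}$.
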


\begin{proof} It follows from the semisimplicity of $\mathcal{A}$ and from Wedderburn theory that

\[ \mathcal{A} \simeq \bigoplus_i \End(V_i). \]

Now, the key idea is to show that $\mathcal{B} \simeq \bigoplus_i \End(U_i)$. For a complete proof see e.g. ~\cite[Theorem 3.3.7, p. 137]{GW}. 
\end{proof}

In this situation, we will call $(\mathcal{A},\mathcal{B})$ a {\it (reductive) dual pair} on $W$. Similarly, if $\mathcal{A}$
and/or $\mathcal{B}$ are generated by, e.g. group actions, we say that those groups form a dual pair on $W$. \bigskip

We will use Theorem~\ref{thm:double-commutant} to derive a characterization of SMF modules in terms of Clifford algebras. This will be useful for the treatment of reducible modules in Section~\ref{sec:reducible}. Let $Cl(V \oplus V^*)$ denote the Clifford algebra on a vector space $V \oplus V^*$ equipped with the canonical symmetric pairing $\left< \cdot,\cdot \right>$ given by

\begin{equation} \label{eqn:symmetric-pairing}
\left< (v,\lambda), (w, \mu) \right> := \lambda(w) + \mu(v),~~ v,w \in V,~ \lambda,\mu \in V^*.
\end{equation}

An action of a group $G$ on $V$ extends naturally to an action on $Cl(V \oplus V^*)$ and hence we can speak about the invariants $Cl(V \oplus V^*)^G$. 

\begin{prop} \label{prop:clifford}
$(G,V)$ is SMF if and only if $Cl(V\oplus V^*)^G$ is commutative.
\end{prop}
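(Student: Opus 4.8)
The plan is to identify $Cl(V\oplus V^*)$ with $\End(\bigwedge V)$ by means of the spinor representation, to check that this identification is $G$-equivariant when $\End(\bigwedge V)$ carries the conjugation action, and then to invoke the general principle that an endomorphism algebra is commutative exactly when the underlying module is multiplicity-free. Once the first (geometric) step is in place, the equivalence becomes a formal consequence of the double commutant theorem.

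First I would realise $\bigwedge V$ as the spinor module for $Cl(W)$, where $W = V\oplus V^*$. Both $V$ and $V^*$ are isotropic for the pairing~\eqref{eqn:symmetric-pairing}, which restricts to the natural perfect pairing between them. I let $W$ act on $\bigwedge V$ by sending $v\in V$ to exterior multiplication $\omega\mapsto v\wedge\omega$ and $\lambda\in V^*$ to the contraction $\iota_\lambda$. A direct check shows these operators satisfy precisely the Clifford relations coming from~\eqref{eqn:symmetric-pairing}: the anticommutators among the $v$'s and among the $\lambda$'s vanish (matching the isotropy of $V$ and $V^*$), while $v\wedge\iota_\lambda + \iota_\lambda(v\wedge-) = \lambda(v)\cdot\mathrm{id}$ matches $\langle v,\lambda\rangle = \lambda(v)$. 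Hence the action extends to an algebra homomorphism $\gamma\colon Cl(W)\to\End(\bigwedge V)$. Since $W$ is even-dimensional with a nondegenerate form, $Cl(W)$ is simple, so $\gamma$ is injective; and the dimension count $\dim Cl(W)=2^{\dim W}=(2^{\dim V})^2=\dim\End(\bigwedge V)$ forces $\gamma$ to be an isomorphism.

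Next I would verify that $\gamma$ is $G$-equivariant. The action of $G$ on $W$ is induced from $V$ (with the contragredient action on $V^*$) and preserves the pairing, while on $\bigwedge V$ it is the natural action $\rho(g)=\bigwedge(g)$. Equivariance of exterior multiplication is immediate, and equivariance of contraction, $\rho(g)\,\iota_\lambda\,\rho(g)^{-1}=\iota_{g\cdot\lambda}$, is the standard compatibility of the interior product with the dual action. Thus $\gamma(g\cdot x)=\rho(g)\,\gamma(x)\,\rho(g)^{-1}$ on the generators $x\in W$, and therefore on all of $Cl(W)$. Consequently $a\in Cl(W)$ is $G$-invariant if and only if $\gamma(a)$ commutes with every $\rho(g)$, so $\gamma$ restricts to an algebra isomorphism
\[ Cl(V\oplus V^*)^G \;\cong\; \End_G\Big(\bigwedge V\Big). \]

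Finally I would conclude via Theorem~\ref{thm:double-commutant}, applied to the algebra generated by $\rho(G)$ inside $\End(\bigwedge V)$: its centraliser is exactly $\End_G(\bigwedge V)$, and writing $\bigwedge V=\bigoplus_i V_i\otimes U_i$ one gets $\End_G(\bigwedge V)\cong\bigoplus_i\End(U_i)$, where $\dim U_i$ is the multiplicity of the irreducible $V_i$. This algebra is commutative if and only if every $\dim U_i\le 1$, i.e. if and only if $\bigwedge V$ is multiplicity-free, which by Definition~\ref{def:SMF} means precisely that $(G,V)$ is SMF. I expect the main obstacle to be the first part—correctly setting up the spinor identification and checking that Clifford multiplication (in particular the contraction $\iota_\lambda$) is $G$-equivariant; after that, everything reduces to the Wedderburn decomposition of the centraliser.
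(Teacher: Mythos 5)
Your proposal is correct and takes essentially the same route as the paper: identify $Cl(V\oplus V^*)$ with $\End(\bigwedge V)$ via the spin representation, observe this identification is $G$-equivariant so that invariants become $\End_G(\bigwedge V)$, and then apply the double commutant theorem to conclude that commutativity of this centralizer is equivalent to all multiplicity spaces being at most one-dimensional. The only difference is presentational: you construct the spinor isomorphism explicitly (exterior multiplication and contraction, Clifford relations, simplicity plus dimension count, equivariance on generators), whereas the paper cites these facts without proof.
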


\begin{proof} $V$ is a maximal isotropic subspace of $V \oplus V^*$ with respect to $\left<\cdot,\cdot \right>$. Hence, the spin representation of $\SO(V\oplus V^*)$ gives rise to a $G$-equivariant algebra isomorphism $Cl(V \oplus V^*) \simeq \End(\bigwedge V)$. Taking invariants, it follows that 

\[ \mathcal{B} =\End_G(\bigwedge V) \simeq Cl(V \oplus V^*)^G \]
is the full centralizer algebra of the action $\rho: G \rightarrow \GL(\bigwedge V)$. Since this representation is completely reducible, $\rho(G)$ generates a semisimple subalgebra $\mathcal{A}$ of $\End(\bigwedge V)$. Hence, by the Double Commutant Theorem~\ref{thm:double-commutant}, 
\[ \bigwedge V = \bigoplus_i V_i \otimes U_i \]
is the isotypic decomposition as a joint $G \times Cl(V \oplus V^*)^G$ module. It follows that the SMF property of $(G,V)$ is equivalent to $\dim(U_i) = 1$ for all $i$. This in turn, is equivalent to the commutativity of $\mathcal{B} \simeq \bigoplus \End(U_i)$. 
\end{proof}

In the following, we present two particular dual pairs on exterior algebras. The discussion mainly follows~\cite{Ho1}. First, consider the action of $\GL_n\times \GL_m$ on $V = \CC^n\otimes \CC^m$. There is a well-known decomposition formula for the exterior powers which is also referred to as ``$(\GL_m,\GL_n)$ skew duality''.

\begin{prop} \label{prop:glm_gln}
$(\GL_n,\GL_m)$ is a reductive dual pair on $\bigwedge V$. More precisely, for $0 \leq k \leq m\cdot n$ one has
\begin{equation} \label{eqn:glm_gln}
\bigwedge^k V = \bigoplus_{\begin{minipage}{34pt}\tiny $|\lambda|=k,\\ \ell(\lambda)\leq m,\\ \lambda_1\leq n$ \end{minipage}} V_{\lambda}^{(n)}\otimes V_{\lambda^t}^{(m)}. 
\end{equation}
\end{prop}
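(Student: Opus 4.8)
The plan is to establish the decomposition (\ref{eqn:glm_gln}) by exhibiting $\bigwedge V$ as a bimodule for $\GL_n \times \GL_m$ and identifying the joint isotypic components. The cleanest route is to recall the classical \emph{skew Cauchy identity} (the exterior analog of the Cauchy formula). One has, as a virtual character identity in the eigenvalues $x_1,\dots,x_n$ of $\GL_n$ and $y_1,\dots,y_m$ of $\GL_m$,
\[
\prod_{i=1}^n \prod_{j=1}^m (1 + x_i y_j) = \sum_{\lambda} s_\lambda(x_1,\dots,x_n)\, s_{\lambda^t}(y_1,\dots,y_m),
\]
where the sum runs over partitions $\lambda$, and $s_\lambda$ denotes the Schur polynomial. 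The left-hand side is precisely the total character of $\bigwedge(\CC^n\otimes\CC^m)$ as a $\GL_n\times\GL_m$ module, since each weight $x_i y_j$ of $V$ contributes a factor $(1+x_iy_j)$ to $\bigwedge V$. The Schur polynomial $s_\lambda(x_1,\dots,x_n)$ vanishes when $\ell(\lambda)>n$ (i.e. when $\lambda$ has more than $n$ rows), and likewise $s_{\lambda^t}(y_1,\dots,y_m)$ vanishes unless $\ell(\lambda^t)=\lambda_1 \leq m$; combining these with $\lambda_1 \leq n$ and $\ell(\lambda)\leq m$ gives exactly the index set appearing in the statement (after matching conventions between $n$ and $m$).

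First I would verify the character computation on the left-hand side by using the standard fact that $\bigwedge(A\oplus B)\cong \bigwedge A \otimes \bigwedge B$, applied to the weight-space decomposition $V=\bigoplus_{i,j}\CC\, e_i\otimes f_j$, so that the graded character is the product of the one-dimensional contributions. Next I would cite (or briefly derive via the Jacobi--Trudi / dual Cauchy identity) the skew Cauchy identity above; this is the representation-theoretic heart of the matter. Matching the degree $k$ part on both sides, the coefficient of the homogeneous degree-$k$ terms yields the decomposition of $\bigwedge^k V$, since Schur polynomials are the characters of the irreducible polynomial representations $V_\lambda$ and each product $s_\lambda\, s_{\lambda^t}$ appears with multiplicity one. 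Because all joint multiplicities are $1$, the pair of generated subalgebras of $\End(\bigwedge V)$ are mutual centralizers, so $(\GL_n,\GL_m)$ is a reductive dual pair by Theorem~\ref{thm:double-commutant}.

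The main obstacle is not the character bookkeeping but ensuring that the decomposition is genuine at the level of $\GL_n\times\GL_m$ \emph{modules}, not merely an equality of characters. Since characters determine representations for reductive groups, equality of graded characters already forces the module isomorphism; the subtler point is the dual-pair assertion. To secure it, I would argue that the multiplicity-one phenomenon means each joint isotypic component is an irreducible $\GL_n\times\GL_m$ representation of the form $V_\lambda^{(n)}\otimes V_{\lambda^t}^{(m)}$, with the $V_\lambda^{(n)}$ pairwise distinct as $\lambda$ ranges over the index set. By the Double Commutant Theorem this is exactly the condition that the centralizer of $\GL_n$ in $\End(\bigwedge V)$ is generated by the image of $\GL_m$, and vice versa, giving the dual-pair statement. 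The one genuine check worth spelling out is the \emph{injectivity} of the correspondence $\lambda \mapsto V_\lambda^{(n)}$ on the admissible index set, i.e. that distinct partitions $\lambda$ satisfying the stated constraints give nonisomorphic $\GL_n$ representations, which follows immediately since a partition with $\ell(\lambda)\le m$ and $\lambda_1\le n$ is recovered from its Schur function.
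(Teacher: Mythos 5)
Your proposal is correct, but it takes a genuinely different route from the paper, which gives no argument at all for this Proposition and simply cites Howe \cite[Theorem 4.1.1]{Ho1}. Your route --- compute the graded character of $\bigwedge(\CC^n\otimes\CC^m)$ as $\prod_{i,j}(1+x_iy_j)$, invoke the dual Cauchy identity $\prod_{i,j}(1+x_iy_j)=\sum_\lambda s_\lambda(x)s_{\lambda^t}(y)$, and upgrade the character identity to a module isomorphism by complete reducibility --- is a standard, self-contained alternative, and your last step correctly isolates what the dual-pair assertion actually requires: not merely multiplicity one of each joint constituent, but that the decomposition is the graph of a bijection between irreducibles, i.e.\ the $V_\lambda^{(n)}$ are pairwise nonisomorphic and (since $\lambda\mapsto\lambda^t$ is injective) so are the $V_{\lambda^t}^{(m)}$; given that, the algebra generated by $\GL_m$ exhausts the full commutant of $\GL_n$ by density on each isotypic component, and Theorem~\ref{thm:double-commutant} closes the argument. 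What the paper's citation (and its follow-up discussion) buys instead is the explicit highest weight vectors $\omega_\lambda$ of~\eqref{eqn:glnglm-highest-weight-vectors}, which are needed later for the $(\OG_n,\mathfrak{so}_{2m})$ skew duality and the harmonicity computations; your character-theoretic argument produces the decomposition but not these vectors. One further point in your favour: the index set as printed in the Proposition ($\ell(\lambda)\leq m$, $\lambda_1\leq n$) is inconsistent with the pairing $V_{\lambda}^{(n)}\otimes V_{\lambda^t}^{(m)}$; the conditions should read $\ell(\lambda)\leq n$, $\lambda_1\leq m$, exactly as in the paper's own description of the highest weight vectors, and your remark about ``matching conventions between $n$ and $m$'' correctly flags this.
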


\begin{proof} See~\cite[Theorem 4.1.1]{Ho1}.
\end{proof}

Moreover, also the highest weight vectors in~\eqref{eqn:glm_gln} can easily be described. (This goes back to~\cite[Cor. 8.2]{Ko}.) Arrange the standard basis vectors $e_{i,j} = e_i \otimes e_j$ of $\CC^n \otimes \CC^m$ in a rectangular array. Given a partition $\lambda$ with $\ell(\lambda) \leq n$ and $\lambda_1 \leq m$, we take the wedge product

\begin{equation} \label{eqn:glnglm-highest-weight-vectors}
\omega_\lambda = e_{1,1} \wedge\dots\wedge e_{1,\lambda_1} \wedge e_{2,1} \wedge\dots\wedge e_{2,\lambda_2} \wedge\dots \wedge e_{n,\lambda_n}.
\end{equation}

It is easy to see that this gives a highest weight vector for $\GL_n\times \GL_m$ of highest weight $(\lambda,\lambda^t)$. In particular, $\omega_\lambda$ has exterior degree $|\lambda|$. \bigskip

Next, consider the action of $\OG(V)$ on $V^m \simeq V \otimes \CC^m$. For convenience and later purpose, we outline the construction of the centralizer action. It is known~\cite[Theorem 4.3.2]{Ho1} that the invariants $(\bigwedge V \otimes \CC^m)^{\OG(V)}$ on the exterior algebra are generated by elements of degree $2$ which come from the various copies of the scalar product on $V$. Explicitly, if we take a basis $e_1,\dots,e_n$ of $V$ such that the symmetric form is given by $(e_i,e_j) = \delta_{i,n + 1 - j}$ and denote the copies of these basis elements by $e_{i,a}$, then for each pair $1 \leq a < b \leq m$ we get a basic invariant 

\begin{equation} \label{eqn:basic-invariant}
\xi_{a,b} = \sum_{i=1}^n e_{i,a}\wedge e_{n+1-i,b}.
\end{equation}
 
In particular, if we have $2m$ copies of $V$, there is a $\OG(V)$ isomorphism $V^{2m} \simeq V\otimes (\CC^m \oplus \CC^{*m})$. If we equip $\CC^m \oplus \CC^{*m}$ with a symmetric pairing as in \eqref{eqn:symmetric-pairing} we have, analogous as in the proof of Proposition~\ref{prop:clifford}, $\OG(V)$ isomorphisms

\begin{equation} \label{eqn:clifford-iso}
\bigwedge V\otimes (\CC^m \oplus \CC^{*m}) \stackrel{\varphi_1}{\longrightarrow} Cl(V\otimes (\CC^m \oplus \CC^{*m})) \stackrel{\varphi_2}{\longrightarrow} \End\left(\bigwedge V\otimes \CC^m\right). 
\end{equation}

It is well-known that $\varphi_1$ maps the space $\bigwedge^2 V\otimes (\CC^m \oplus \CC^{*m})$ to a Lie subalgebra of $Cl(V\otimes (\CC^m \oplus \CC^{*m}))$ isomorphic to $\mathfrak{so}(V\otimes (\CC^m \oplus \CC^{*m}))$. This subspace contains all the basic invariants~\eqref{eqn:basic-invariant} for $\OG(V)$. Denote exterior multiplication with an element $v \in V\otimes\CC^m$ by $\bigwedge v$ and interior multiplication with $\lambda\in (V\otimes\CC^m)^*$ by $\rfloor \lambda$. Then, the composite isomorphism~\eqref{eqn:clifford-iso} maps the basic invariants to the following operators: If $1 \leq a < b \leq m$, the element $\xi_{a,b}$ is is sent to

\begin{equation} \label{eqn:raising}
e^+_{a,b} = \sum_{i=1}^n \bigwedge{e_{i,a}}\bigwedge{e_{n+1-i,b}},
\end{equation}
that rises the degree of each homogeneous element by $2$. Furthermore, the image of $\xi_{a+m,b+m}$ is given by 

\begin{equation} \label{eqn:lowering}
e^-_{a,b} = \sum_{i=1}^n \rfloor{e^*_{i,a}}\rfloor{e^*_{n+1-i,b}}
\end{equation}
that lowers the degree by $2$. Finally, for $\xi_{a,b+m}$ with $1 \leq a , b \leq m$, we also get operators

\begin{equation} \label{eqn:preserving}
d_{a,b} = \sum_{i=1}^n \bigwedge{e_{i,a}}\rfloor{e^*_{n+1-i,b}} - \delta_{a,b} \cdot \frac{n}{2}
\end{equation}
that preserve degrees. The linear subspaces of $\mathfrak{so}(V\otimes(\CC^m \oplus \CC^{* m}))$ generated by these operators are denoted by $\mathfrak{so}^{(2,0)}$, $\mathfrak{so}^{(0,2)}$ and $\mathfrak{so}^{(1,1)}$ respectively. By virtue of the commutation relations for~\eqref{eqn:raising} -~\eqref{eqn:preserving}, their direct sum forms a Lie algebra isomorphic to $\mathfrak{so}_{2m}$. This discussion amounts to the following Proposition (see also~\cite[Theorem 4.3.4.1]{Ho1}):

\begin{prop} \label{prop:duality_construction}
The commutant of $\OG(V)$ inside $\End(\bigwedge V \otimes \CC^m)$ is generated as an associative algebra by a Lie algebra $\mathfrak{so}_{2m}$. \hfill $\Box$
\end{prop}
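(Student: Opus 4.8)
The plan is to verify that the three families of operators defined in (3.7)–(3.9)—$e^+_{a,b}$, $e^-_{a,b}$, and $d_{a,b}$—span a Lie algebra isomorphic to $\mathfrak{so}_{2m}$, and that this algebra generates the full commutant. The construction already tells us these operators are the images under the composite isomorphism (3.6) of the basic invariants $\xi_{a,b}$, $\xi_{a+m,b+m}$, $\xi_{a,b+m}$ inside $\mathfrak{so}(V\otimes(\CC^m\oplus\CC^{*m}))$. So the Proposition really has two halves: an identification of the Lie algebra they span, and a surjectivity-onto-the-commutant statement. I will treat these in turn.

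For the Lie-algebra identification, I would first count: the raising operators $e^+_{a,b}$ ($1\le a<b\le m$) give $\binom{m}{2}$ elements, the lowering operators $e^-_{a,b}$ another $\binom{m}{2}$, and the degree-preserving operators $d_{a,b}$ ($1\le a,b\le m$) give $m^2$; the total is $2\binom{m}{2}+m^2 = m(m-1)+m^2 = 2m^2-m = \binom{2m}{2}=\dim\mathfrak{so}_{2m}$, which is exactly right. Next I would verify the bracket relations. Since $\varphi_1$ sends $\bigwedge^2 V\otimes(\CC^m\oplus\CC^{*m})$ isomorphically (as a Lie algebra) onto $\mathfrak{so}(V\otimes(\CC^m\oplus\CC^{*m}))$, and the three subspaces $\mathfrak{so}^{(2,0)}$, $\mathfrak{so}^{(0,2)}$, $\mathfrak{so}^{(1,1)}$ are the images of the basic invariants, it suffices to confirm that these subspaces close under the bracket with the structure of $\mathfrak{so}_{2m}$. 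Concretely, $\mathfrak{so}^{(1,1)}$ (the $d_{a,b}$) should form a copy of $\mathfrak{gl}_m$ acting on the $\CC^m$-factor, $\mathfrak{so}^{(2,0)}$ and $\mathfrak{so}^{(0,2)}$ should be abelian subspaces transforming as $\bigwedge^2\CC^m$ and $\bigwedge^2\CC^{*m}$ respectively under this $\mathfrak{gl}_m$, and $[e^+,e^-]$ should land back in $\mathfrak{gl}_m$. This is exactly the triangular (parabolic) decomposition $\mathfrak{so}_{2m}=\mathfrak{n}^-\oplus\mathfrak{gl}_m\oplus\mathfrak{n}^+$ associated to the $\GL_m$ Levi, and the normalization shift $-\delta_{a,b}\cdot n/2$ in (3.9) is precisely what makes the trace conditions match so that one gets $\mathfrak{so}_{2m}$ rather than $\mathfrak{so}_{2m}\times\CC$. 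The bracket computations reduce to the canonical anticommutation relations for exterior and interior multiplication, namely $\bigl(\bigwedge v\bigr)\bigl(\rfloor\lambda\bigr)+\bigl(\rfloor\lambda\bigr)\bigl(\bigwedge v\bigr)=\lambda(v)\cdot\mathrm{id}$ together with the pairing convention $(e_i,e_j)=\delta_{i,n+1-j}$; these are the routine calculations I would not grind through.

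For the second half—that this Lie algebra actually generates the entire commutant $\End_{\OG(V)}\!\bigl(\bigwedge V\otimes\CC^m\bigr)$ as an associative algebra—I would invoke the first fundamental theorem for the orthogonal group in the skew setting, already cited as \cite[Theorem 4.3.2]{Ho1}: the $\OG(V)$-invariants in $\bigwedge V\otimes\CC^m$ are generated in degree $2$ by the scalar-product invariants $\xi_{a,b}$. Transported through the isomorphisms (3.6), the full invariant subalgebra $\bigl(Cl(V\otimes(\CC^m\oplus\CC^{*m}))\bigr)^{\OG(V)}$ corresponds to $\End_{\OG(V)}\!\bigl(\bigwedge V\otimes\CC^m\bigr)$, and its degree-$2$ generators are exactly the operators (3.7)–(3.9). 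Hence the associative algebra they generate is the whole commutant.

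The main obstacle is the bracket-relation verification, specifically getting the degree-preserving block right: one must check that the $d_{a,b}$ close to give $\mathfrak{gl}_m$ (not merely an abelian or larger algebra) and that the constant $n/2$ shift correctly centers the diagonal so the Cartan element $\sum_a d_{a,a}$ sits inside $\mathfrak{so}_{2m}$ with the right eigenvalue, placing $\bigwedge V\otimes\CC^m$ in a single irreducible or well-understood $\mathfrak{so}_{2m}$-representation (an oscillator/spinor-type module). Getting the sign and shift conventions consistent between the pairing $(e_i,e_j)=\delta_{i,n+1-j}$ and the Clifford pairing (3.2) is the delicate bookkeeping; once the relations $[d_{a,b},e^+_{c,d}]$, $[d_{a,b},e^-_{c,d}]$, and $[e^+_{a,b},e^-_{c,d}]$ are computed and matched against the known structure constants of $\mathfrak{so}_{2m}$ in this parabolic presentation, the identification follows and the Proposition is complete.
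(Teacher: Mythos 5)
Your proposal is correct and follows essentially the same route as the paper: the proposition carries a $\Box$ precisely because its proof is the preceding discussion, namely Howe's skew first fundamental theorem \cite[Theorem 4.3.2]{Ho1} giving generation of the invariants in degree $2$, the transport through the isomorphisms~\eqref{eqn:clifford-iso} identifying the commutant with $Cl(V\otimes(\CC^m\oplus\CC^{*m}))^{\OG(V)}$, and the identification of the span of the operators~\eqref{eqn:raising}--\eqref{eqn:preserving} as $\mathfrak{so}_{2m}$ via their commutation relations. Your additional details (the dimension count $2\binom{m}{2}+m^2=\dim\mathfrak{so}_{2m}$ and the parabolic decomposition $\mathfrak{so}_{2m}=\mathfrak{n}^-\oplus\mathfrak{gl}_m\oplus\mathfrak{n}^+$ with abelian nilradicals) merely flesh out what the paper compresses into ``by virtue of the commutation relations,'' so the two arguments coincide.
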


By the Double commutant Theorem ~\ref{thm:double-commutant} it follows that $\bigwedge \CC^n \otimes \CC^m$ is a module under the joint $(\OG_n, \mathfrak{so}_{2m})$ action and the isotypic components of $\OG_n$ and $\mathfrak{so}_{2m}$ stand in bijective correspondence. However, note that this is {\it not} an SMF module according to our definition since the wedge powers $\bigwedge^k \CC^n\otimes\CC^m$ are not stable with respect to action of $\mathfrak{so}_{2m}$. \bigskip

For the following we also need a description of the highest weights for $\OG_n$ that appear in $\bigwedge \CC^n\otimes \CC^m$. By construction~\eqref{eqn:glnglm-highest-weight-vectors}, the joint $\GL_n\times \GL_m$ highest weight vector $\omega_\lambda$ corresponding to the partition $\lambda$ is also a highest weight vector for $\OG_n$. 

Denote by $[\lambda]$ the character of the irreducible $\OG_n$ representation generated by $\omega_\lambda$. As long as $\ell(\lambda) \leq \lfloor \frac{n}{2} \rfloor$, it is no problem to read off this character directly. Otherwise, this is not immediately possible and moreover, $[\lambda]$ may refer to a virtual representation.~\cite{Ki1} gives a procedure (``modification rules'') how this representation can be determined. Here, the following notation is used: If $[\lambda]$ is the character of any irreducible representation $V$ of $\OG_n$, then $[\lambda]^\sharp$ denotes the character of $V \otimes \det$. 

\begin{lemma}[\bf Modification rules for $\OG_n$] \label{lemm:mod_rules} $\quad$ 

\begin{itemize}

\item[(a)] Remove $h = 2\ell(\lambda) - n$ boxes (called a {\itshape strip of length $h$}) from $\lambda$ by starting in the lowest box of the first column. Then in each step, if there is a box to the right, go one box right; otherwise go one box upwards.

\item[(b)] If the resulting diagram does not correspond to a regular partition $\hat{\lambda}$, then $[\lambda]\equiv 0$. Otherwise, we have $[\lambda]\equiv (-1)^{c-1}[\hat{\lambda}]^\sharp$, where $c$ is the number of columns that are affected by the removed strip. 

\item[(c)] If $\ell(\hat{\lambda}) > \lfloor \frac{n}{2} \rfloor$, repeat (a), otherwise $\hat{\lambda}=\tau$. \\

\end{itemize}

\end{lemma}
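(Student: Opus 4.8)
The plan is to obtain the rules from the character theory of $\OG_n$, regarding $[\lambda]$ as the (a priori virtual) character attached via the Weyl--Littlewood determinantal formula to the formal weight $\mu_\lambda = \sum_i \lambda_i \varepsilon_i$, and reducing it to a genuine character of $\OG_n$. When $\ell(\lambda) \le \lfloor \frac{n}{2}\rfloor$ the weight $\mu_\lambda$ already lies in the dominant chamber and $[\lambda]$ is the honest irreducible generated by $\omega_\lambda$, so there is nothing to do. The whole content of the lemma is therefore the reduction of $[\lambda]$ to a signed genuine character, or to zero, in the overlong range $\ell(\lambda) > \lfloor \frac{n}{2}\rfloor$, and the strategy is to realise this reduction as a Weyl-group straightening together with the extra $\det$-twist coming from the passage $\SO_n \rightsquigarrow \OG_n$.

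First I would fix the half-sum of positive roots, for which $\rho_i = \frac{n}{2} - i$ uniformly in both parities, and record the two standard features of the Weyl alternant: the dotted action $w\bullet\mu = w(\mu+\rho)-\rho$ satisfies $[\,w\bullet\mu\,] = (-1)^{\ell(w)}[\,\mu\,]$, and $[\,\mu\,]\equiv 0$ precisely when $\mu+\rho$ is singular (fixed by a reflection). The Weyl group of $\OG_n$ acts by signed permutations of the shifted coordinates $\beta_i := \lambda_i + \frac{n}{2} - i$, so returning $\mu_\lambda + \rho$ to the dominant chamber means reflecting the negative $\beta_i$ through $0$ and re-sorting the $|\beta_i|$ into strictly decreasing order. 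Since $\rho_i$ drops by one down the coordinates, the only $\beta_i$ that can be non-positive sit in rows $i > \lfloor \frac{n}{2}\rfloor$, and a short computation identifies the effect of reflecting and reinserting the deepest such coordinate with the removal of a connected boundary strip of length $h = 2\ell(\lambda) - n$ from the $\beta$-set; the singular case $|\beta_i| = |\beta_j|$ (or $\beta_i = 0$) is exactly the situation in which the removed strip leaves a shape that is not a genuine partition, giving $[\lambda]\equiv 0$, the first alternative of (b).

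The heart of the argument is the combinatorial dictionary between this coordinate bookkeeping and part (a): reflecting the excess coordinate and re-sorting is the same as deleting the rim strip that starts at the foot of the first column and moves right where possible and up otherwise. When the result $\hat\lambda$ is a regular partition, the straightening sign $(-1)^{\ell(w)}$ has to be matched with $(-1)^{c-1}$, which I would do by relating the length of the re-sorting permutation to the number $c$ of columns the strip meets (equivalently, to the number of times the reflected $\beta$-value must be commuted past a larger coordinate); a pure vertical strip contributes no transpositions, matching $c=1$, while a strip spanning $c$ columns contributes $c-1$ of them. The $\sharp$-twist appears because for $\OG_n$ a single sign change in the last coordinate is realised not by an inner Weyl element but by the determinant character, so tracking that one reflection converts $[\,\hat\lambda\,]$ into $[\,\hat\lambda\,]^\sharp$. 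Finally, since each pass strictly decreases the number of rows, iterating (a) terminates with $\ell(\hat\lambda)\le \lfloor \frac{n}{2}\rfloor$, which is part (c).

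The step I expect to be the main obstacle is precisely this matching of the abstract data ``$(-1)^{\ell(w)}$'' and ``$\mu_\lambda+\rho$ singular'' with their combinatorial avatars $(-1)^{c-1}$ and ``$\hat\lambda$ not regular''. It requires a careful case analysis of how the rim path crosses column boundaries and interacts with the $\rho$-shift, and one must also check that the determinant twist is applied the correct number of times, especially in the even-$n$ case where $\SO_n$ only sees even sign changes. Once this dictionary is established, parts (a)--(c) follow by induction on $\ell(\lambda)$; for the complete combinatorial verification I would ultimately defer to~\cite{Ki1}.
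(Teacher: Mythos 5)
The first thing to note is that the paper does not prove this lemma at all: it is King's modification rule, quoted with the pointer to~\cite{Ki1} in the sentence preceding it, so your closing deferral to~\cite{Ki1} is not by itself a defect. What is a defect is the mechanism you offer as the proof idea, because it is unavailable precisely in the only regime the lemma addresses. When $\ell(\lambda) > \lfloor \frac{n}{2}\rfloor$, the formal weight $\mu_\lambda=\sum_i\lambda_i\varepsilon_i$ has more coordinates than the rank $m=\lfloor \frac{n}{2}\rfloor$ of $\SO_n$: it is not a weight of the group, there is no Weyl alternant attached to it, and the Weyl group --- signed permutations of only $m$ coordinates, with only evenly many sign changes when $n$ is even --- does not act on your sequence $\beta_i=\lambda_i+\frac{n}{2}-i$. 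Note also that reflecting and re-sorting never changes the number of coordinates, whereas a correct reduction must pass from $\ell(\lambda)$ coordinates down to $m$. The failure is not merely one of definitions; it shows on the paper's own worked example. For $n=6$, $\lambda=(2^5,1)$ one has $\beta=(4,3,2,1,0,-2)$; reflecting $-2\mapsto 2$ collides with $\beta_3=2$, so your dictionary (``singular $\Leftrightarrow$ the removed strip leaves an irregular shape, hence $[\lambda]\equiv 0$'') predicts $[\lambda]\equiv 0$, while the strip removal leaves the regular shape $(2,1^3)$ and the lemma gives $[\lambda]\equiv-[2,1]\neq 0$. Moreover, for $n$ odd every sign change already lies in the Weyl group of $\SO_n$, so your bookkeeping could never produce the twist $[\hat\lambda]^\sharp$, which for $\OG_{2m+1}$ is a character genuinely different from $[\hat\lambda]$ even though both restrict identically to $\SO_{2m+1}$.

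If you want an actual proof rather than a citation, the workable route (essentially King's~\cite{Ki1}, carried out rigorously by Koike--Terada~\cite{KT}) runs on the conjugate side, which is why the rule is phrased in terms of columns. One expresses $[\lambda]$ by a determinant of dual Jacobi--Trudi type in the characters $e_k$ of $\bigwedge^k\CC^n$, with rows indexed by the column lengths $\lambda^t_i$. Under specialization to $\OG_n$ these satisfy $e_k=0$ for $k\notin\{0,\dots,n\}$ and $e_k\equiv e_{n-k}^\sharp$, the latter coming from the $\OG_n$-isomorphism $\bigwedge^k\CC^n\simeq\bigwedge^{n-k}\CC^n\otimes\det$. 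Straightening the determinant with these relations is exactly the strip removal of length $2\lambda^t_1-n$ starting at the foot of the first column: two rows becoming equal gives $0$ (irregular diagram), reordering the rows produces the sign $(-1)^{c-1}$, the det-twist produces the $\sharp$ in (b), and iteration terminates as in (c). This determinantal straightening, not a dotted Weyl action on row coordinates, is the correct substitute for the mechanism you had in mind.
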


\begin{example}
Let $n = 6$ and $\lambda=(2^3,1)$. A strip removal of length $h=8-6=2$ leads to an irregular diagram, so that $[\lambda]=0$.

\[ \young(\hfill\hfill,\hfill\hfill,*\hfill,*) \]

More interesting is e.g. $\lambda=(2^5,1)$. The strip length equals $h = 12 - 6 = 6$. The modified diagram $\hat{\lambda}=(2,1^3)$ has length $\ell=4$, so a second removal is necessary, now $h' = 8 - 6 = 2$. This results in $[\lambda]\equiv -[2,1]$, because the first strip affected two columns of $\lambda$. In the picture, the $*-$signs (resp. $+-$signs) indicate the removal of the first (resp. second) strip:

\[ \young(\hfill\hfill,\hfill*,+*,+*,**,*) \] \end{example}

Denote the conjugate partition of $\lambda$ by $\lambda^t = (\lambda^t_1,\dots,\lambda^t_m)$. According to \cite[Section 4.3.5, p. 57]{Ho1} the decomposition of $\bigwedge\CC^n\otimes\CC^m$ is determined by the {\it harmonic} highest weight
vectors $\omega_\lambda$, i.e. those in the kernel of $\mathfrak{so}^{(0,2)}_{2m}$, and this condition is equivalent to
$\lambda^t_i + \lambda^t_j \leq n$ for all $1 \leq i, j \leq m$. By applying the operators $d_{a,a}$ to a harmonic $\omega_\lambda$, one finds that this has highest weight

\begin{equation} \label{eqn:highest-weight_so2m}
\left(\frac{n}{2} - \lambda^t_m \right) \varepsilon_1 + \left(\frac{n}{2} - \lambda^t_{m-1}\right) \varepsilon_2 +\dots + \left(\frac{n}{2} - \lambda^t_1 \right) \varepsilon_m, 
\end{equation}
so it generates an irreducible representation $V(\tilde{\lambda})$ for $\mathfrak{so}_{2m}$. Now we are in the position to give the explicit decomposition of the dual pair $(\OG_n,\mathfrak{so}_{2m})$.

\newpage

\begin{prop}[$(\OG_n,\mathfrak{so}_{2m})$ skew duality] \label{prop:duality_explicit}
As a module for the joint $(\OG_n,\mathfrak{so}_{2m})$ action, we have 

\[ \bigwedge \CC^n\otimes \CC^m = \bigoplus_{\lambda:~ \lambda_1 \leq m,~\lambda^t_i + \lambda^t_j \leq n} V_{[\lambda]} \otimes V(\tilde{\lambda}). \]

Here, $V_{[\lambda]}$ is the irreducible representation of $\OG_n$ whose character equals $[\lambda]$. If $\ell(\lambda) > \lfloor \frac{n}{2} \rfloor$, it can be determined by the modification rule in Lemma~\ref{lemm:mod_rules}. $V(\tilde{\lambda})$ is the irreducible representation of $\mathfrak{so}_{2m}$ of highest weight~\eqref{eqn:highest-weight_so2m}.

Furthermore, the harmonic highest weight vectors $\omega_\lambda$ that correspond to this decomposition are given by the procedure~\eqref{eqn:glnglm-highest-weight-vectors}. In particular, $\omega_\lambda$ has degree $|\lambda|$. 
\hfill $\Box$
\end{prop}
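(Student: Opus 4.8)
The plan is to assemble the decomposition from the pieces already developed in the excerpt rather than to prove it from scratch. First I would invoke Proposition~\ref{prop:duality_construction} together with the Double Commutant Theorem~\ref{thm:double-commutant}: the commutant of $\OG_n$ inside $\End(\bigwedge \CC^n \otimes \CC^m)$ is generated by a copy of $\mathfrak{so}_{2m}$, so $\bigwedge \CC^n \otimes \CC^m$ decomposes as a multiplicity-free sum $\bigoplus_i V_i \otimes U_i$ under the joint $(\OG_n, \mathfrak{so}_{2m})$ action, with the $\OG_n$-types $V_i$ and the $\mathfrak{so}_{2m}$-types $U_i$ in bijective correspondence. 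The entire task then reduces to identifying the pairs $(V_i, U_i)$ that occur and the index set over which they run.

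To index the summands, I would use the raising/lowering/degree-preserving operators from equations~\eqref{eqn:raising}--\eqref{eqn:preserving}. The $\mathfrak{so}_{2m}$-highest weight vectors are precisely the vectors killed by all raising and Cartan-positive root operators; within this setup the relevant condition is being harmonic, i.e.\ lying in the kernel of $\mathfrak{so}^{(0,2)}_{2m}$ (generated by the lowering operators $e^-_{a,b}$). I would quote the fact, recalled just before the Proposition from~\cite[Section 4.3.5]{Ho1}, that the harmonic joint highest weight vectors are exactly the $\omega_\lambda$ of~\eqref{eqn:glnglm-highest-weight-vectors} subject to $\lambda_1 \leq m$ (so that $\omega_\lambda$ lives in $\CC^n \otimes \CC^m$ at all, forcing $\lambda^t$ to have at most $m$ parts) and the harmonicity constraint $\lambda^t_i + \lambda^t_j \leq n$ for all $i,j$. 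This pins down the index set in the stated sum.

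Next I would identify the two factors attached to each admissible $\lambda$. On the $\OG_n$ side, $\omega_\lambda$ is a $\GL_n \times \GL_m$ highest weight vector by~\eqref{eqn:glnglm-highest-weight-vectors} and hence, as noted, an $\OG_n$ highest weight vector; the irreducible $\OG_n$-module it generates has character $[\lambda]$ by definition, which is computed via the modification rules of Lemma~\ref{lemm:mod_rules} when $\ell(\lambda) > \lfloor n/2 \rfloor$. On the $\mathfrak{so}_{2m}$ side, I would apply the degree-preserving Cartan operators $d_{a,a}$ of~\eqref{eqn:preserving} to the harmonic vector $\omega_\lambda$: a direct eigenvalue computation, using that $\omega_\lambda$ has $\lambda^t_a$ wedge factors involving the $a$-th copy, yields the weight~\eqref{eqn:highest-weight_so2m}, so the $\mathfrak{so}_{2m}$-factor is the irreducible $V(\tilde\lambda)$ of that highest weight. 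The assertions about the harmonic highest weight vectors and the degree $|\lambda|$ then follow immediately from~\eqref{eqn:glnglm-highest-weight-vectors}.

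The main obstacle is not the identification of the generic summands but the bookkeeping at the boundary where $\ell(\lambda)$ exceeds $\lfloor n/2 \rfloor$, i.e.\ where the naive character $[\lambda]$ need not correspond to a genuine irreducible representation and must be corrected by the modification rules. Here one must check that the harmonicity condition $\lambda^t_i + \lambda^t_j \leq n$ is exactly the combinatorial constraint ensuring that, after applying Lemma~\ref{lemm:mod_rules}, each surviving $[\lambda]$ is a nonzero irreducible $\OG_n$-character and that distinct admissible $\lambda$ give distinct pairs, so that the sum is genuinely multiplicity-free and exhausts $\bigwedge \CC^n \otimes \CC^m$ by a dimension or character count. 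I would handle this by appealing to the cited results of~\cite{Ho1} and~\cite{Ki1}, treating the verification that the index set and the two families of highest weight vectors match up as the delicate point, while the operator computations leading to~\eqref{eqn:highest-weight_so2m} are routine.
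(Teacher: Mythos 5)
Your proposal is correct and follows essentially the same route as the paper: the paper's own justification is exactly the discussion preceding the proposition, namely Proposition~\ref{prop:duality_construction} plus the Double Commutant Theorem~\ref{thm:double-commutant} to get the multiplicity-free joint decomposition, the identification of the harmonic highest weight vectors $\omega_\lambda$ (kernel of $\mathfrak{so}^{(0,2)}_{2m}$, equivalent to $\lambda^t_i+\lambda^t_j\leq n$) via \cite[Section 4.3.5]{Ho1}, the character $[\lambda]$ with King's modification rules on the $\OG_n$ side, and the eigenvalue computation with the operators $d_{a,a}$ giving the weight~\eqref{eqn:highest-weight_so2m} on the $\mathfrak{so}_{2m}$ side. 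Your flagged ``delicate point'' about the boundary cases $\ell(\lambda)>\lfloor n/2\rfloor$ is handled in the paper at the same level of rigor you propose, i.e.\ by citation to~\cite{Ho1} and~\cite{Ki1}.
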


The pairs $(\GL_n,\GL_m)$ and $(\OG_n,\mathfrak{so}_{2m})$ on $\bigwedge \CC^n\otimes\CC^m$ are special examples of reductive dual pairs and can be defined in a more general context. A detailed discussion on this topic can be found in~\cite{Ho2}.

\section{Classification of irreducible SMF modules} \label{sec:irred}

It turns out that every irreducible SMF module in this section can be traced back to one of the above dual pairs. In particular, $(\GL_n,\GL_m)$ skew duality~\eqref{eqn:glm_gln} itself gives  rise to a SMF representation. In our framework, one can say that the action $\CC^* \otimes \SL_n \otimes \SL_m$ (which is geometrically equivalent to $\GL_n \otimes \GL_m$) is SMF. Equivalently, under the semisimple subgroup $\SL_m \times \SL_n$, each submodule $\bigwedge^k \CC^n\otimes\CC^m$ is multiplicity-free. \bigskip

In general, let $G = \CC^* \times [G,G]$ with $[G,G]$ being semisimple and connected. We claim that there are only three further series of irreducible SMF modules.

\begin{theorem} \label{thm:SMF_irr}
Let $G$ be as above. A complete list (up to geometric equivalence) of irreducible SMF representations is given as follows: If $[G,G]$ is simple, it is given by one of the representations from Theorem~\ref{thm:howeslist}. Otherwise, it is one of the following: \bigskip

\noindent $\SL_m\otimes \SL_n~{\rm (} m,n\geq 2{\rm )}$, $\SL_2\otimes \SO_n~ {\rm (}n\geq 3 {\rm )}$, $\SL_3\otimes \SO_{2m+1}~ {\rm (}m\geq 1{\rm )},~ \SL_n \otimes \Sp_4~ {\rm (}n\geq 2{\rm )}$.
\end{theorem}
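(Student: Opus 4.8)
The plan is to reduce $V$ to a tensor product and then play the admissible factors off against the skew dualities of Section~\ref{sec:dual}. Since $[G,G]=G_1\times\dots\times G_k$ is semisimple and connected and $V$ is irreducible, we may write $V=V_1\otimes\dots\otimes V_k$ with each $V_i$ a nontrivial irreducible $G_i$-module (trivial tensor factors only add torus directions and may be discarded). By Corollary~\ref{cor:products} every $(G_i,V_i)$ is SMF, hence each $V_i$ occurs in the list of Theorem~\ref{thm:howeslist}. To cut down further I would apply Lemma~\ref{lemm:levi} to the parabolic whose Levi is $G_i\times G_j$ times a maximal torus of the remaining factors: the irreducible submodule $V_i\otimes V_j\otimes(\text{weight line})$ shows that every pairwise product $V_i\otimes V_j$ is itself SMF. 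Thus it suffices to (a) classify the SMF two-factor products and (b) verify that no product of three or more nontrivial factors is SMF. For (b) a direct computation shows that the minimal configuration $\CC^2\otimes\CC^2\otimes\CC^2$ under $\SL_2\times\SL_2\times\SL_2$ is not SMF; since, by the above, every admissible triple restricts onto this one or onto finitely many equally small configurations, Lemma~\ref{lemm:levi} forces $k\leq 2$.

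For the positive direction I would realize each of the four series as a piece of a reductive dual pair. The case $\SL_m\otimes\SL_n$ on $\CC^m\otimes\CC^n$ is immediate from the $(\GL_n,\GL_m)$ skew duality~\eqref{eqn:glm_gln}, the summands $V_\lambda^{(n)}\otimes V_{\lambda^t}^{(m)}$ being pairwise distinct within each fixed exterior degree. For $\SL_2\otimes\SO_n$ and $\SL_3\otimes\SO_{2m+1}$ I would use the $(\OG_n,\mathfrak{so}_{2m})$ duality of Proposition~\ref{prop:duality_explicit} with the number of copies equal to $2$ and $3$ respectively. Here $\SL_m$ sits inside the centralizing $\mathfrak{so}_{2m}$ as the degree-preserving subalgebra $\mathfrak{so}^{(1,1)}\simeq\mathfrak{gl}_m$, and fixing the exterior degree amounts to fixing the central weight $\varepsilon_1+\dots+\varepsilon_m$ of each $V(\tilde{\lambda})$. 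A weight-space bookkeeping, using the multiplicity-free branching $\mathfrak{so}_{2m}\downarrow\mathfrak{gl}_m$ (which is transparent because $\mathfrak{so}_4\simeq\mathfrak{sl}_2\times\mathfrak{sl}_2$ and $\mathfrak{so}_6\simeq\mathfrak{sl}_4$), then shows that each graded piece decomposes without multiplicities under $\SO_n\times\SL_m$. The series $\SL_n\otimes\Sp_4$ is treated by the symplectic analogue of the construction in Proposition~\ref{prop:duality_construction}, namely the dual pair $(\Sp_4,\mathfrak{sp}_{2n})$, with $\SL_n\subset\mathfrak{sp}_{2n}$ again the degree-preserving subalgebra.

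For completeness I would exclude every remaining two-factor product. Both factors must be \emph{natural} representations: if $V_1$ is one of the larger entries of Theorem~\ref{thm:howeslist} (an $S^k$, a $\bigwedge^j$, a spin, or an exceptional module), then restriction along Lemma~\ref{lemm:levi}, combined with the rank-stabilization observation at the close of Section~\ref{sec:basics}, reduces $V_1\otimes V_2$ to one of finitely many minimal products which a direct calculation shows not to be SMF (for instance $S^2\SL_2\otimes\SO_3\simeq\SO_3\otimes\SO_3$, or any product involving a proper plethysm). These exclusions are carried out case by case and deferred to Appendix~\ref{app:irred}.

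The hardest part is twofold. Inside the positive direction, the delicate step is the passage from $\OG_n$ to the connected group $\SO_n$: restricting to $\SO_n$ may split an $\OG_n$-irreducible or may identify $V_{[\lambda]}$ with its twist $V_{[\lambda]}\otimes\det$ (cf.\ Lemma~\ref{lemm:mod_rules}), and either phenomenon can manufacture a multiplicity in $\bigwedge^k V$. Verifying that no such collision occurs is exactly what pins down the admissible pairs of $\SL$-rank and parity of $n$, and is the reason $\SL_2\otimes\SO_n$ survives for all $n$ while the $\SL_3$-series is confined to odd orthogonal groups. I expect this bookkeeping, together with the exhaustive small-case exclusions of the completeness step, to be the main obstacle.
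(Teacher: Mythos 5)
Your overall architecture (reduction to products of simple SMF modules via Corollary~\ref{cor:products}, pairwise Levi restriction, $(\GL_n,\GL_m)$ skew duality for $\SL_m\otimes\SL_n$, and the $(\OG_n,\mathfrak{so}_{2m})$ duality with exceptional isomorphisms plus the $\OG_n$-versus-$\SO_n$ parity bookkeeping for the orthogonal series) matches the paper. But there is a fatal error in your completeness step: your exclusion of three or more simple factors rests on the claim that $\SL_2\times\SL_2\times\SL_2$ acting on $\CC^2\otimes\CC^2\otimes\CC^2$ is not SMF. This is false, and it would contradict the very theorem you are proving: this representation is geometrically equivalent to $\SL_2\otimes\SO_4$ (via the isogeny $\SL_2\times\SL_2\rightarrow\SO_4$), i.e.\ to the $n=4$ member of the series $\SL_2\otimes\SO_n$ in the list, and it is SMF by Lemma~\ref{lemm:Olmost_dual}/\ref{lemm:SOlmost_dual}; concretely $\bigwedge^3(\CC^2\otimes\CC^2\otimes\CC^2) = [1,1,1]+[3,1,1]+[1,3,1]+[1,1,3]$, with no multiplicity in any degree. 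Hence the conclusion $k\leq 2$ is wrong. The correct statement, established in Appendix~\ref{app:irred}, is that this triple is the \emph{unique} example with three simple factors, and one must exclude the remaining triples by direct computation ($\SL_2\otimes\SL_2\otimes\SL_3$, $\SL_2\otimes\SL_2\otimes S^2\SL_2$, $\SL_2\otimes\SL_2\otimes\bigwedge^2\SL_4$, $\SL_2\otimes\SL_2\otimes\Sp_4$); your strategy of restricting every triple onto a non-SMF minimal configuration cannot be repaired, because the minimal configuration is SMF.

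The second gap is your treatment of $\SL_n\otimes\Sp_4$ via the dual pair $(\Sp_4,\mathfrak{sp}_{2n})$, which is circular as proposed. It requires that the harmonic $\mathfrak{sp}_{2n}$-modules of that pair restrict multiplicity-freely to the degree-preserving subalgebra $\mathfrak{gl}_n$; by the double commutant theorem this is \emph{equivalent} to the SMF statement itself, and unlike your orthogonal cases there is no exceptional isomorphism making it transparent, since the centralizer $\mathfrak{sp}_{2n}$ grows with $n$ instead of being the fixed small algebra $\mathfrak{so}_4$ or $\mathfrak{so}_6$. Moreover, $\mathfrak{sp}\downarrow\mathfrak{gl}$ branching can fail to be multiplicity-free even on harmonic modules of skew dualities: by Table~\ref{tab:NonMFplethysms} (entry $(C1^3)$), ${\rm Res}^{\GL_6}_{\Sp_6}S_{(2,2,1,1)}(\CC^6)$ contains $2[0,1,0]$, and feeding this into $(\GL_6,\GL_2)$ skew duality shows that in the pair $(\Sp_6,\mathfrak{sp}_4)$ on $\bigwedge \CC^6\otimes\CC^2$ the harmonic $\mathfrak{sp}_4$-module attached to the $\Sp_6$-type $[0,1,0]$ contains the $\mathfrak{gl}_2$-type $(4,2)$ twice, within a single exterior degree. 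The paper sidesteps this by flipping the roles of the two factors: it decomposes $\bigwedge^k(\CC^n\otimes\CC^4)$ by $(\GL_n,\GL_4)$ skew duality~\eqref{eq:branching_dec}, reducing the claim to multiplicity-freeness of ${\rm Res}^{\GL_4}_{\Sp_4}$, which is then proved via the exceptional isomorphisms $\SL_4\simeq\Spin_6$, $\Sp_4\simeq\Spin_5$ and the classical multiplicity-free branching $\Spin_6\downarrow\Spin_5$ (Lemma~\ref{lemm:branch_Sp4}). You should adopt that route (or supply an independent proof of the $\mathfrak{sp}_{2n}\downarrow\mathfrak{gl}_n$ statement for the relevant harmonic modules).
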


By Corollary \ref{cor:products}, all irreducible SMF representations for non-simple $[G,G]$ are obtained as products of representations in Theorem \ref{thm:howeslist}. In Appendix A we show by a case-by-case analysis that there are no other skew multiplicity-free representations than those in Theorem \ref{thm:SMF_irr}.

It then remains to show that each choice of $[G,G]$ from this list yields an SMF representation. As we pointed out, this is the case for $\SL_m\otimes \SL_n$. Note that $\SL_2\otimes\SO_n$ is SMF for all $n$ while for $\SL_3\otimes\SO_n$ this is only true for $n$ odd. Instead, if we consider the full orthogonal group, we can drop this assumption. We discuss this phenomenon in the subsequent lemmas. 

\begin{lemma} \label{lemm:SL3_Om}
$\SL_3 \otimes \OG_n$ is SMF for all $n \in \NN$.
\end{lemma}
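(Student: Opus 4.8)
The plan is to exploit the $(\OG_n,\mathfrak{so}_{2m})$ skew duality from Proposition~\ref{prop:duality_explicit} with $m=3$, so that the centralizer of $\OG_n$ acting on $\bigwedge \CC^n\otimes\CC^3$ is generated by a copy of $\mathfrak{so}_6 \cong \mathfrak{sl}_4$. Recall that the SMF property for the saturated group $\CC^*\times\SL_3\times\OG_n$ acting on $\CC^n\otimes\CC^3$ is, by Remark~\ref{rem:saturated}, equivalent to each homogeneous piece $\bigwedge^k(\CC^n\otimes\CC^3)$ being multiplicity-free as a module for the semisimple part $\SL_3\times\OG_n$. So the first step is to reduce the claim to a statement about multiplicities in these fixed-degree pieces, and then to understand how the $\mathfrak{so}_6$-action repackages the $\OG_n$-isotypic components.

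The key observation I would carry out is that the $\mathfrak{so}_{2m}$-weight in~\eqref{eqn:highest-weight_so2m} records the exterior degree: since $\omega_\lambda$ has degree $|\lambda|=\sum_i \lambda^t_i$, applying the degree operators $\sum_a d_{a,a}$ shows that the grading by exterior degree corresponds to the grading of $V(\tilde\lambda)$ by the eigenvalue of a fixed semisimple element of $\mathfrak{so}_6$ (essentially $\sum_i (\tfrac n2 - \lambda^t_i)$ up to a shift). The point is that $\bigwedge^k$ is \emph{not} $\mathfrak{so}_6$-stable, but it is stable under the degree-preserving part $\mathfrak{so}^{(1,1)}\cong\mathfrak{gl}_3$, which is exactly the Lie algebra of $\GL_3\supseteq\SL_3$. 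Thus each $\bigwedge^k$ decomposes as a module for $\OG_n\times\GL_3$, and the multiplicity of a given $\OG_n\times\SL_3$-irreducible in $\bigwedge^k$ is controlled by how the irreducible $\mathfrak{so}_6$-module $V(\tilde\lambda)$ decomposes into $\mathfrak{gl}_3$-weight/isotypic pieces. The plan is to show that within each $\OG_n$-isotype $V_{[\lambda]}$, the accompanying $\mathfrak{so}_6$-module $V(\tilde\lambda)$ restricts to $\GL_3$ multiplicity-freely in each degree, and that distinct harmonic $\lambda$ give distinct $\OG_n$-characters $[\lambda]$ after applying the modification rules of Lemma~\ref{lemm:mod_rules}.

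Concretely I would proceed as follows. First, fix $k$ and list the harmonic partitions $\lambda$ with $\lambda_1\le 3$ and $\lambda^t_i+\lambda^t_j\le n$ contributing to degree $k$; each contributes $V_{[\lambda]}\otimes V(\tilde\lambda)$ globally, and I intersect with the degree-$k$ slice. Second, I would invoke the classical branching $\mathfrak{so}_6\downarrow\mathfrak{gl}_3$ (equivalently $\mathfrak{sl}_4\downarrow\mathfrak{gl}_3$), which is multiplicity-free, to see that each $V(\tilde\lambda)$ contributes each $\GL_3$-type at most once in a fixed degree. Third — and this is where the genuine content lies — I must rule out collisions \emph{between different} $\lambda$: two distinct harmonic partitions $\lambda\neq\mu$ of the same size could in principle yield the same $\OG_n$-character $[\lambda]=[\mu]$ (since for $\ell(\lambda)>\lfloor n/2\rfloor$ the symbol $[\lambda]$ is only a virtual character and the modification rules may identify or annihilate diagrams) \emph{and} the same $\SL_3$-type simultaneously. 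The main obstacle is precisely this interaction: I expect that for small $n$ (where $\ell(\lambda)$ exceeds $\lfloor n/2\rfloor$ and the modification rules of Lemma~\ref{lemm:mod_rules} become active) the bookkeeping is delicate, because a nonzero modified character $(-1)^{c-1}[\hat\lambda]^\sharp$ could coincide with another $[\mu]$.

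To handle the obstacle I would split into the \emph{stable} range $n\ge 6$ (or more precisely $\lfloor n/2\rfloor\ge \ell(\lambda)$ for all relevant $\lambda$, so that no modification is needed and the $[\lambda]$ are genuinely distinct irreducibles indexed injectively by $\lambda$) and the finitely many \emph{small} cases. In the stable range the argument is clean: distinct $\lambda$ give distinct $\OG_n$-types, the $\mathfrak{so}_6\downarrow\mathfrak{gl}_3$ branching is multiplicity-free, and the degree grading separates the $\GL_3$-types within each $V(\tilde\lambda)$, so no multiplicity exceeds one. For the small values of $n$ (which are the cases $\SL_3\otimes\SO_n$ might fail for even $n$, but succeed once we pass to the full $\OG_n$), I would verify directly that passing from $\SO_n$ to $\OG_n$ merges the would-be-colliding $\SO_n$-irreducibles into a single $\OG_n$-irreducible together with its $\det$-twist, so the extra $\pm$ sign in the modification rule reflects exactly the two $\SO_n$-pieces being glued, thereby removing the multiplicity. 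This is consistent with the phenomenon flagged before Lemma~\ref{lemm:SL3_Om}, namely that $\SL_3\otimes\SO_n$ is SMF only for odd $n$ while $\SL_3\otimes\OG_n$ is SMF for all $n$; the even-$n$ failure for $\SO_n$ is resolved precisely by the $\det$-twist bookkeeping that $\OG_n$ supplies.
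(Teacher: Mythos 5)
Your approach coincides with the paper's in its core steps: $(\OG_n,\mathfrak{so}_6)$ skew duality, the observation that the degree-preserving part $\mathfrak{so}^{(1,1)}_6\cong\mathfrak{gl}_3$ integrates to the $\GL_3$-action on each $\bigwedge^k$, the exceptional isomorphism $\mathfrak{so}_6\simeq\mathfrak{sl}_4$, and the multiplicity-free branching $\mathfrak{gl}_4\downarrow\mathfrak{gl}_3$ (your remark that the $\mathfrak{gl}_3$-central character, i.e.\ the exterior degree, is what separates $\GL_3$-types with equal $\SL_3$-restriction is correct and indeed necessary). The gap lies in the step you yourself flag as carrying ``the genuine content'': ruling out collisions $[\lambda]=[\mu]$ for distinct harmonic $\lambda\neq\mu$. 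Your proposed dichotomy fails at its first half: the ``stable range'' in which no modification rules are needed is \emph{empty}, not $n\ge 6$. Harmonicity in Proposition~\ref{prop:duality_explicit} only bounds sums of distinct column lengths (this is how it is used in the proofs of Lemmas~\ref{lemm:SL3_SO2n+1} and~\ref{lemm:SOlmost_dual}, where harmonic $\lambda$ with $\ell(\lambda)>\lfloor n/2\rfloor$ explicitly occur); for instance every single-column partition $(1^k)$ with $k\le n$ is harmonic, so for \emph{every} $n$ the decomposition contains partitions with $\ell(\lambda)>\lfloor n/2\rfloor$ whose symbols $[\lambda]$ require modification. Consequently all $n$ land in your ``small case'', and the $\det$-twist bookkeeping you sketch there is a heuristic, not a proof.

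The repair is immediate, and it is exactly how the paper argues: Proposition~\ref{prop:duality_explicit} is the joint \emph{isotypic} decomposition for the reductive dual pair $(\OG_n,\mathfrak{so}_6)$ of Proposition~\ref{prop:duality_construction}, so the Double Commutant Theorem~\ref{thm:double-commutant} already guarantees that the $V_{[\lambda]}$, for distinct harmonic $\lambda$, are pairwise non-isomorphic $\OG_n$-modules --- no inspection of the modification rules is needed, in any range of $n$. Hence any multiplicity in $\bigwedge^k(\CC^n\otimes\CC^3)$ under $\OG_n\times\SL_3$ would have to arise inside a single $V(\tilde{\lambda})$ restricted to $\mathfrak{gl}_3=\mathfrak{so}^{(1,1)}_6$, which the branching rule excludes. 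The collision phenomenon you are guarding against, namely the identification $[\lambda]^\sharp=[\lambda]$, is real only for the \emph{connected} group $\SO_n$, where genuinely distinct $\OG_n$-types restrict to isomorphic $\SO_n$-modules; that is precisely the subject of the next lemma (Lemma~\ref{lemm:SL3_SO2n+1}) and its parity-of-degree argument, not of the present one.
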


\begin{proof} 
The dual pair $(\mathfrak{so}_{2k},\OG_n)$ on $\bigwedge \CC^k \otimes \CC^n$ as constructed in Proposition~\ref{prop:duality_construction} is closely connected to the action of $\SL_k\times \OG_n$ on the exterior algebra of $\CC^k \otimes \CC^n$: The key observation is that the subalgebra $\mathfrak{so}_{2k}^{(1,1)}$ of $\mathfrak{so}_{2k}$ is isomorphic to $\mathfrak{gl}_k$ and acts by degree preserving operators. In particular, we get an action of $\mathfrak{so}^{(1,1)}_{2k}\times \OG_n$ on $\CC^k\otimes \CC^n$. If one integrates this to a group action, then this becomes geometrically equivalent to $\CC^*\otimes\SL_k \otimes \OG_n$. 

In the current situation, $k = 3$ and by the bijective correspondence between the isotypic components of $\OG_n$ and $\mathfrak{so}_{2k}$, it suffices to show that the restriction $\mathfrak{so}_{6} \downarrow \mathfrak{sl}_3$ is multiplicity-free for irreducible representations of $\mathfrak{so}_{6}$. Note that it is known (see e.g.~\cite[Theorem 8.1.1, p. 350]{GW}) that the restriction of any irreducible representation of $\mathfrak{gl}_m$ to $\mathfrak{gl}_{m-1}$ is multiplicity-free. The claim follows since there is an exceptional isomorphism $\mathfrak{so}_6 \simeq \mathfrak{sl}_4$ and the embeddings of $\mathfrak{sl}_3$ into $\mathfrak{sl}_4$ and $\mathfrak{so}^{(1,1)}_6 \subseteq \mathfrak{so}_6$ give rise to a commutative diagram. 
\end{proof}

\begin{lemma} \label{lemm:SL3_SO2n+1}
$\SL_3 \otimes \SO_{n}$ is SMF if and only if $n$ is odd.
\end{lemma}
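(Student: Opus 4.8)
The plan is to build on Lemma~\ref{lemm:SL3_Om}, which already establishes that $\SL_3 \otimes \OG_n$ is SMF for all $n$, and then analyze what happens when we pass from the full orthogonal group $\OG_n$ to its identity component $\SO_n$. The crucial distinction is between $n$ odd and $n$ even. When $n = 2k+1$ is odd, the orthogonal group splits as $\OG_n \simeq \SO_n \times \{\pm 1\}$, where $-1 = -\mathrm{Id}$ acts as a central element. Since $-\mathrm{Id}$ acts as a scalar on $\CC^n$, it acts as $\pm 1$ on each homogeneous piece $\bigwedge^r(\CC^3 \otimes \CC^n)$ and in particular acts trivially on each irreducible $\SO_n$-constituent up to a global sign, so that restricting a multiplicity-free $\OG_n$-decomposition to $\SO_n$ cannot introduce new multiplicities. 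Hence the SMF property for $\OG_n$ descends to $\SO_n$ when $n$ is odd, and the ``if'' direction follows immediately.

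The ``only if'' direction is the substantive part: I must exhibit, for $n$ even, a multiplicity in $\bigwedge(\CC^3 \otimes \CC^n)$ under $\SL_3 \otimes \SO_n$. The source of the failure is precisely the phenomenon where $\OG_n$-irreducibles split upon restriction to $\SO_n$. Recall that for $n = 2k$ even, an irreducible representation $V_{[\lambda]}$ of $\OG_n$ with $\ell(\lambda) = k$ (i.e.\ a partition whose first column has exactly $n/2$ boxes) restricts to $\SO_n$ as a \emph{sum of two inequivalent} irreducible half-representations $V_\lambda^+ \oplus V_\lambda^-$, interchanged by the outer automorphism induced by $\OG_n/\SO_n$. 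By contrast, when $\ell(\lambda) < k$ the restriction stays irreducible. Using Proposition~\ref{prop:duality_explicit} with $m = 3$, I would locate a partition $\lambda$ with $\lambda_1 \leq 3$ and $\lambda^t_i + \lambda^t_j \leq n$ satisfying $\ell(\lambda) = n/2$, so that its $\OG_n$-constituent $V_{[\lambda]}$ becomes reducible over $\SO_n$. The key point is then to check that the two halves $V_\lambda^{\pm}$ are \emph{isomorphic} as $\SO_n$-modules to each other, or that they recombine with constituents coming from a different partition, thereby producing a repeated $\SL_3 \times \SO_n$-isotypic component inside some single $\bigwedge^r(\CC^3 \otimes \CC^n)$.

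Concretely, I expect the multiplicity to arise as follows. Fix $n = 2k$ and choose the smallest relevant partition, say $\lambda = (1^k)$ (a single column of height $k$), which has $\lambda_1 = 1 \leq 3$ and satisfies the harmonicity bound since $\lambda^t = (k)$ gives $\lambda^t_i + \lambda^t_j = 0$ for $i,j \geq 2$. For this spinorial-type weight, $V_{[\lambda]}$ splits over $\SO_n$ into two pieces that are \emph{exchanged by the diagram automorphism of $D_k$}; since geometric equivalence (via the automorphism $\varphi$ coming from $-w_0$ discussed after Definition~\ref{def:geom_equiv}) identifies these two $\SO_n$-representations, they contribute the \emph{same} irreducible $\SL_3 \times \SO_n$-type to $\bigwedge(\CC^3 \otimes \CC^n)$, yielding multiplicity two. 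I would verify this by pairing the $\SO_n$-decomposition of $V_{[\lambda]}$ with the corresponding $\mathfrak{so}_6 \simeq \mathfrak{sl}_4$ isotypic data and checking that the two halves carry identical $\SL_3$-labels.

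The main obstacle will be making the ``only if'' direction clean rather than a case-check: I need a uniform argument that the splitting $V_{[\lambda]}\!\downarrow_{\SO_n} = V_\lambda^+ \oplus V_\lambda^-$ always produces a genuine repetition of an $\SL_3 \times \SO_n$-type, as opposed to two halves that remain distinguishable by their $\SL_3$-factors (in which case no multiplicity would appear). The delicate part is controlling how the $\SL_3$-label (equivalently, the $\mathfrak{so}_6$-highest weight~\eqref{eqn:highest-weight_so2m}) interacts with the half-spin splitting on the orthogonal side. I would resolve this by noting that the outer automorphism acts only on the $\SO_n$-factor and fixes the $\SL_3$-factor pointwise, so both halves $V_\lambda^{\pm}$ are necessarily tensored with one and the same $\SL_3$-representation; combined with the geometric equivalence identifying $V_\lambda^+$ with $V_\lambda^-$ as abstract $\SO_n$-modules, this forces the multiplicity. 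Establishing this last identification rigorously — that the two halves are genuinely isomorphic and not merely conjugate — is where the argument requires the most care, and it is precisely the point at which the even case diverges qualitatively from the odd case treated above.
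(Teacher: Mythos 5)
Your ``if'' direction ($n$ odd) is correct, and it is a genuinely different and cleaner route than the paper's. You use that $\OG_n = \SO_n \times \{\pm I\}$ for odd $n$ and that the central element $-I$ acts on $\bigwedge^r(\CC^3\otimes\CC^n)$ by the scalar $(-1)^r$: hence all $\OG_n$-constituents of a fixed exterior power carry the same central sign, and since two distinct irreducible $\OG_n$-modules restrict to isomorphic $\SO_n$-modules precisely when they differ by $\det$ (equivalently, by the sign of $-I$, as $\det(-I)=-1$ for $n$ odd), the multiplicity-free $\OG_n$-decomposition of Lemma~\ref{lemm:SL3_Om} stays multiplicity-free over $\SO_n$ degree by degree. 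The paper reaches the same parity separation computationally: it tracks the exterior degrees of harmonic highest weight vectors under $(\OG_n,\mathfrak{so}_6)$ skew duality (Proposition~\ref{prop:duality_explicit}) and shows via the modification rules (Lemma~\ref{lemm:mod_rules}) that two components with $[\tau]=[\lambda]^\sharp$ live in degrees of opposite parity. Your argument gets this for free from the center; to make it airtight you should state explicitly that for odd $n$ every $\OG_n$-irreducible restricts irreducibly to $\SO_n$ and that det-twisted pairs are separated by the central sign.

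The ``only if'' direction, however, has a genuine gap, and the mechanism you commit to cannot produce a multiplicity at all. For $n=2k$ and $\ell(\lambda)=k$, the restriction of $V_{[\lambda]}$ to $\SO_n$ splits as $V_\lambda^+\oplus V_\lambda^-$ with highest weights $(\lambda_1,\dots,\lambda_{k-1},\pm\lambda_k)$; these two summands are \emph{inequivalent} $\SO_n$-modules. They are conjugate under the outer automorphism, but conjugacy is not isomorphism: geometric equivalence (Definition~\ref{def:geom_equiv}) identifies pairs $(G,V)$ up to automorphisms of the image of $G$ in $\GL(V)$ and never yields an $\SO_n$-equivariant isomorphism $V_\lambda^+\simeq V_\lambda^-$, whereas multiplicity counts genuine module isomorphisms. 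The paper makes exactly this point in Lemma~\ref{lemm:SOlmost_dual}: the splitting $V_{[\lambda]}=V_{[\lambda]}^+\oplus V_{[\lambda]}^-$ ``does not affect the multiplicity-freeness''. So your candidate $\lambda=(1^k)$ gives two distinct constituents, each occurring once, and no contradiction. The actual failure for even $n$ comes from the opposite phenomenon, which you mention only in passing: pairs $V_{[\mu]}$ and $V_{[\mu]^\sharp}=V_{[\mu]}\otimes\det$ with $\ell(\mu)<k$ are distinct over $\OG_n$ but become isomorphic over $\SO_n$ (and here your central-element argument collapses, since $-I\in\SO_{2k}$ and $\det(-I)=1$). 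One must then exhibit such a pair in the same exterior degree and tensored with the same $\SL_3$-type. The paper does this for $k\geq 2$: ${\rm Res}^{\GL_{2k}}_{\OG_{2k}}\bigl(V^{(2k)}_{(3,2^{k-1},1)}\bigr)$ contains both $V_{[2^{k-1}]}$ and $V_{[2^{k-1}]^\sharp}$, so under $(\GL_3,\GL_{2k})$ skew duality (Proposition~\ref{prop:glm_gln}) the single component $V^{(3)}_{(k+1,k,1)}\otimes V^{(2k)}_{(3,2^{k-1},1)}$ of $\bigwedge^{2k+2}(\CC^3\otimes\CC^{2k})$ already contains one $\SL_3\times\SO_{2k}$-type twice.
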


\begin{proof} Given the fact that $\SL_3 \otimes \OG_n$ is SMF, there is one problem that could cause a violation of the SMF property of $\SL_3 \otimes \SO_n$. The group $\SO_n$ acts trivially on the one-dimensional determinant representation $\CC_{\det}$ of $\OG_n$ and hence one has a $\SO_n$ equivariant isomorphism

\begin{equation}
V \otimes \CC_{\det} \simeq V
\end{equation}

for any representation $V$ of $\OG_n$. In terms of characters, this means that $[\lambda]^\sharp = [\lambda]$ as characters for the subgroup $\SO_n$. Concerning $(\OG_n,\mathfrak{so}_{6})$ skew duality, there are isotypic components $V_{[\lambda]}\otimes V(\tilde{\lambda})$ and $V_{[\tau]}\otimes V(\tilde{\tau})$ of $\bigwedge \CC^n\otimes \CC^3$ with 

\begin{equation} \label{eqn:mod_this}
[\tau] = [\lambda]^\sharp.
\end{equation}

Now, if ${\rm Res}_{\SL_3}(V(\tilde{\lambda}))$ and ${\rm Res}_{\SL_3}(V(\tilde{\tau}))$ have an irreducible $\SL_3$ module in common, $\SL_3\otimes\SO_n$ is no longer SMF, provided that both occur in the same exterior power $\bigwedge^k\CC^n\otimes\CC^3$.

If $n = 2m + 1$, we know from Proposition \ref{prop:duality_explicit} that $\lambda^t_i + \lambda^t_j \leq 2m + 1$ (and an analogous condition for $\tau$). Hence, if $\ell(\lambda) = \lambda^t_1 > m$, then $\lambda^t_i \leq m$. Thus, we can conclude that $\ell(\lambda) \leq m$ and $\ell(\tau) > m$ (or vice versa) and we need only a single strip removal (Lemma~\ref{lemm:mod_rules}) from $\tau$ in order to establish~\eqref{eqn:mod_this}.\bigskip

Now we restrict from $\SO_n \times \mathfrak{so}_6$ to $\SO_n \times \mathfrak{sl}_3$. In order to show that two copies of an irreducible $\SO_n \times \mathfrak{sl}_3$ submodule never appear in the same degree, we look at the harmonic highest weight vectors $\omega_\lambda$ of the dual pair $(\mathfrak{so}_6,\OG_{2m+1})$. By the description \eqref{eqn:glnglm-highest-weight-vectors}, we know that $\omega_\lambda$ has exterior degree $|\lambda|$. The action of $\OG_{2m+1}$ preserves this degree while $\mathfrak{so}_6$ acts by differential operators of degree $-2$, $0$ and $2$. Hence, for a fixed $\omega_\lambda$ all the weight spaces of the corresponding isotypic component of $\mathfrak{so}_6$ are contained either in $\bigoplus_i \bigwedge^{2i+1} V$ or in $\bigoplus_i \bigwedge^{2i} V$. \bigskip

Since $[\tau] = [\lambda]^\sharp$, we have to compare $\deg(\omega_\lambda)$ with $\deg(\omega_\tau)$. By Proposition~\ref{prop:duality_explicit}, $\lambda$ and $\tau$ have at most $3$ columns.  Lemma~\ref{lemm:mod_rules} says that the first part of $\tau$ changes to 

\begin{equation} \label{eqn:modi}
\tau_1^t - (2\tau_1^t - 2m - 1) = 2m + 1 - \tau_1^t
\end{equation}
while $\tau^t_2$ and $\tau^t_3$ remain unchanged. Note that $2m + 1 - \tau_1^t$ is odd if $\tau_1^t$ is even (and vice versa). In particular, 

\begin{equation} \label{eqn:difference_of_degrees}
\deg(\omega_\lambda) - \deg(\omega_\tau)
\end{equation}
is an odd integer and hence we are done. \bigskip

In the case that $n = 2m$, the same argument shows that the parity of~\eqref{eqn:difference_of_degrees} is even. Although this gives evidence that $\SL_3\otimes \SO_{2n}$ is not SMF, it is not a proof for that. But since for $m \geq 2$,

\[ {\rm Res}^{\GL_6}_{\OG_6}(V(3,2^{m-1},1)) \]
contains $V_{[2^{m-1}]}$ and $V_{[2^{m-1}]^\sharp}$ which are isomorphic as representations of $\SO_{2m}$, the SMF property of $\SL_3\otimes \SO_{2n}$ is violated. (For a detailed reasoning see the beginning of Appendix A).
\end{proof}

Let $X_n = \OG_n$ or $\SO_n$. 
From $(\OG_n,\mathfrak{so}_{4})$ skew duality we get a very precise description of the $\SL_2\times X_n$ module structure of $\bigwedge \CC^2\otimes\CC^n$. It also yields a refined statement of the SMF property of $\SL_2\otimes\OG_n$ which will be important in the next section. There is an isomorphism $\mathfrak{so}_4 \simeq \mathfrak{sl}_2 \times \mathfrak{sl}_2$ and we can make this explicit in terms of the operators~\eqref{eqn:raising} -~\eqref{eqn:preserving}. Namely, 

\[ [e^+_{1,2},e^-_{1,2}] = - d_{1,1} - d_{2,2} \]

\[ [d_{1,2}, d_{2,1}] = d_{1,1} - d_{2,2}. \]

The first equation stands for a copy $\mathfrak{sl}^{(1)}_2$ that raises and lowers the exterior powers and the second equation defines a copy $\mathfrak{sl}^{(2)}_2$ that preserves all degrees. \bigskip

In the current situation, the labels of the irreducible $\mathfrak{so}_4$ representations in Proposition~\ref{prop:duality_explicit} have the form $\tilde{\lambda} = (\frac{n}{2}-\lambda^t_2,\frac{n}{2}-\lambda^t_1)$. Since $V(a,b) \simeq V(a+b)\otimes V(a-b)$ via the isomorphism $\mathfrak{so}_4 \simeq \mathfrak{sl}^{(1)}_2 \times \mathfrak{sl}^{(2)}_2$, one deduces that

\begin{equation} \label{eqn:On_so4-skew}
\bigwedge \CC^n \otimes \CC^2 = \bigoplus_{\lambda_1 \leq 2,~ \lambda^t_1 + \lambda^t_2 \leq n} V_{[\lambda]}\otimes V(n - \lambda^t_1 - \lambda^t_2) \otimes V(\lambda^t_1 - \lambda^t_2) 
\end{equation}
as $X_n\times\mathfrak{sl}^{(1)}_2\times\mathfrak{sl}^{(2)}_2$ module. 
The isotypic decomposition of $\bigwedge \CC^n\otimes\CC^2$ under the group $\SL_2\times X_n$ can be determined as follows: For each $\lambda$ in the direct sum~\eqref{eqn:On_so4-skew}, there is a a harmonic highest weight vector $\omega_\lambda$ of degree $|\lambda| = \lambda^t_1 + \lambda^t_2$. Correspondingly, for each $\nu = 0, 1, \dots, n - \lambda^t_1 - \lambda^t_2 + 1$ we get a direct summand $V_{[\lambda]}\otimes V(\lambda^t_1 - \lambda^t_2)$ for the group $X_n\times\SL_2$ in 

\[ \bigwedge^{|\lambda| + 2\nu} \CC^n\otimes \CC^2. \]

The next lemma follows immediately from this description.

\begin{lemma} \label{lemm:Olmost_dual}
$\SL_2\otimes\OG_n$ is an almost dual pair on each exterior power of $\CC^n\otimes\CC^2$, i.e. if

\[ \bigwedge^k \CC^n\otimes\CC^2 = \bigoplus_\lambda V_{[\lambda]}\otimes V(\tilde{\lambda}), \]
then for all $\lambda \neq \mu$ in this sum, $V_{[\lambda]}$ and $V_{[\mu]}$ are nonisomorphic. In particular, $\SL_2\otimes\OG_n$ is SMF. \hfill $\Box$
\end{lemma}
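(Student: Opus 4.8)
The plan is to read off the lemma directly from the explicit $\SL_2\times X_n$ decomposition derived just above in equation~\eqref{eqn:On_so4-skew} and the paragraph following it. That description already tells us precisely which pairs $V_{[\lambda]}\otimes V(\tilde\lambda)$ appear in a fixed exterior power $\bigwedge^k\CC^n\otimes\CC^2$, so the work is to verify the \emph{injectivity} claim: distinct partitions $\lambda\neq\mu$ occurring in the same degree $k$ produce nonisomorphic $X_n$-representations $V_{[\lambda]}$ and $V_{[\mu]}$.

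First I would fix the degree $k$ and catalogue which summands land there. From the displayed recipe, a partition $\lambda$ with $\lambda_1\leq 2$ and $\lambda^t_1+\lambda^t_2\leq n$ contributes the summand $V_{[\lambda]}\otimes V(\lambda^t_1-\lambda^t_2)$ inside $\bigwedge^{|\lambda|+2\nu}\CC^n\otimes\CC^2$ for each $\nu=0,1,\dots,n-\lambda^t_1-\lambda^t_2+1$. So a given $k$ is hit by $\lambda$ precisely when $|\lambda|=\lambda^t_1+\lambda^t_2$ has the same parity as $k$ and $\lambda^t_1+\lambda^t_2\leq k\leq 2n-\lambda^t_1-\lambda^t_2+2$. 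Since $\lambda$ has at most two columns, it is completely determined by the pair $(\lambda^t_1,\lambda^t_2)$ with $\lambda^t_1\geq\lambda^t_2$, and the corresponding $\SL_2$-factor is $V(\lambda^t_1-\lambda^t_2)$.

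The key step is then to show that the map $\lambda\mapsto [\lambda]$ is injective on the set of two-column partitions satisfying $\lambda^t_1+\lambda^t_2\leq n$. For $\ell(\lambda)=\lambda^t_1\leq\lfloor n/2\rfloor$ the character $[\lambda]$ is just the honest irreducible $\OG_n$-character indexed by $\lambda$, so distinct such $\lambda$ give distinct $V_{[\lambda]}$. For $\lambda^t_1>\lfloor n/2\rfloor$ one applies the modification rule of Lemma~\ref{lemm:mod_rules}; because $\lambda^t_1+\lambda^t_2\leq n$ forces $\lambda^t_2\leq\lfloor n/2\rfloor$, a single strip removal of length $2\lambda^t_1-n$ from the first column sends the pair $(\lambda^t_1,\lambda^t_2)$ to the regular partition with column lengths $(n-\lambda^t_1,\lambda^t_2)$ (tensored with $\det$). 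One checks this modified pair still has its largest part $\leq\lfloor n/2\rfloor$ and that the assignment $(\lambda^t_1,\lambda^t_2)\mapsto(n-\lambda^t_1,\lambda^t_2)$ is a bijection on the relevant index set, hence the composite $\lambda\mapsto[\lambda]$ remains injective. Thus $V_{[\lambda]}\not\simeq V_{[\mu]}$ for $\lambda\neq\mu$ in the same sum, which is exactly the almost-dual-pair statement.

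The SMF conclusion is then immediate: in $\bigwedge^k\CC^n\otimes\CC^2$, restricting each summand $V_{[\lambda]}\otimes V(\lambda^t_1-\lambda^t_2)$ to $\SL_2\times\OG_n$ yields pairwise distinct $\OG_n$-factors, so no irreducible $\SL_2\times\OG_n$ representation can appear twice in a single degree, and by Remark~\ref{rem:saturated} this gives the SMF property for the saturated group $\CC^*\otimes\SL_2\otimes\OG_n$. The main obstacle I anticipate is the careful bookkeeping in the two-column modification rule: one must confirm both that the strip removal never produces an irregular diagram (which would kill the character) and that it genuinely acts bijectively on the index set, so that no collision $[\lambda]=[\mu]$ with $\lambda\neq\mu$ sneaks in through the modification. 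Everything else is transcription from the already-established $(\OG_n,\mathfrak{so}_4)$ duality.
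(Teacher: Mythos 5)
Your argument is correct in outline, but it does by hand what the paper gets for free. The paper's proof of this lemma is genuinely a one-liner, which is why the statement carries a $\Box$ and is prefaced by ``follows immediately from this description'': by Proposition~\ref{prop:duality_construction} and the Double Commutant Theorem~\ref{thm:double-commutant}, the decomposition~\eqref{eqn:On_so4-skew} is the isotypic decomposition of the \emph{dual pair} $(\OG_n,\mathfrak{so}_4)$ on the whole algebra $\bigwedge \CC^n\otimes\CC^2$, and for a dual pair the constituents attached to distinct indices are automatically pairwise non-isomorphic: $\lambda\mapsto\tilde\lambda$ is visibly injective, and the isotypic components of the two members of a dual pair correspond bijectively. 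Passing to a single exterior power only shrinks the index set, so the almost-dual-pair property, and with it SMF, is immediate --- no modification rules enter anywhere. Your substitute for this step, a combinatorial proof that $\lambda\mapsto[\lambda]$ is injective on two-column partitions with $\lambda^t_1+\lambda^t_2\le n$, is essentially the style of argument the paper reserves for the genuinely delicate statements about the subgroup $\SO_n$ (Lemmas~\ref{lemm:SL3_SO2n+1} and~\ref{lemm:SOlmost_dual}), where the dual-pair formalism alone fails because $[\lambda]$ and $[\lambda]^\sharp$ coincide upon restriction; for the full orthogonal group it is unnecessary work, though it does make the indexing completely explicit.

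Within your route there is one soft spot. Your strip-removal computation is correct (it is the same computation as~\eqref{eqn:modi}): for $\ell(\mu)>\lfloor n/2\rfloor$ the condition $\mu^t_1+\mu^t_2\le n$ forces the strip of length $2\mu^t_1-n$ to stay in the first column, giving $[\mu]=[\hat\mu]^\sharp$ with $\hat\mu$ of column lengths $(n-\mu^t_1,\mu^t_2)$ and no sign. But the concluding inference ``the assignment $(\lambda^t_1,\lambda^t_2)\mapsto(n-\lambda^t_1,\lambda^t_2)$ is a bijection, hence $\lambda\mapsto[\lambda]$ is injective'' is incomplete as stated: bijectivity rules out collisions among the modified labels, but not a collision between a twisted character $[\hat\mu]^\sharp$ and an untwisted one $[\lambda]$ with $\ell(\lambda)\le\lfloor n/2\rfloor$. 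To exclude that you need the association rule for $\OG_n$: tensoring with $\det$ sends the irreducible with standard label $\nu$ (meaning $\nu^t_1+\nu^t_2\le n$) to the one whose first column has length $n-\nu^t_1$, all other columns unchanged. Hence $[\hat\mu]^\sharp$ is precisely the irreducible with standard label $\mu$, and since $\ell(\mu)>\lfloor n/2\rfloor\ge\ell(\lambda)$ while distinct standard labels give non-isomorphic $\OG_n$-modules, no collision can occur. With that one sentence supplied, your proof closes, and the SMF conclusion you draw from it is correct.
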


\begin{lemma} \label{lemm:SOlmost_dual}
$\SL_2\otimes\SO_n$ is SMF for all $n \in \NN$. If $n$ is odd, then it is also an almost dual pair on each $\bigwedge^k\CC^2\otimes\CC^n$.
\end{lemma}

\begin{proof} Assume there are $\omega_\lambda$ and $\omega_\mu$ such that $[\lambda] = [\mu]^\sharp$ and the representations $V_{[\lambda]}\otimes V(\lambda^t_1-\lambda^t_2)$ and $V_{[\mu]}\otimes V(\mu^t_1-\mu^t_2)$ appear in the same exterior power of $\CC^n\otimes\CC^2$. We can assume that $\ell(\lambda) \leq \lfloor\frac{n}{2}\rfloor$ and $\ell(\mu) > \lfloor\frac{n}{2}\rfloor$. Since $\mu^t_1 + \mu^t_2 \leq n$, it follows from the modification rules in Lemma~\ref{lemm:mod_rules} that $\lambda_1 \neq \mu_1$ but $\lambda_2 = \mu_2$. Hence, $V(\lambda^t_1-\lambda^t_2)$ and $V(\mu^t_1-\mu^t_2)$ are nonisomorphic. Note that if $n = 2m$ and $\lambda = (\lambda_1,\dots,\lambda_m)$ with $\lambda \neq 0$, as a representation of $\SO_{2m}$

\[ V_{[\lambda]} = V_{[\lambda]}^- \oplus V_{[\lambda]}^+ \]
is a direct sum of irreducible representations of highest weights $(\lambda_1,\dots,\lambda_{m-1},-\lambda_m)$ and $\lambda$. But this does not affect the multiplicity-freeness.

If $n = 2m+1$, the above assumptions on $\omega_\lambda$ and $\omega_\mu$ are never satisfied (see proof of Lemma~\ref{lemm:SL3_SO2n+1}). Hence, the statement for $\SL_2\otimes\SO_{2m+1}$ follows.
\end{proof}

\begin{remark} Some parts of the above discussions can be found in~\cite[p. 147]{Ho1}. In fact, our deduction of the SMF property of $\SL_2\otimes\OG_n$ via $(\OG_n,\mathfrak{so}_4)$ skew duality follows the lines of this reference. On the other hand,~\cite{Ho1} does not cover the case $\SL_2\otimes\SO_n$. 
\end{remark}

We now turn to $\SL_n\otimes \Sp_4$. The exterior powers of this actions can be decomposed by virtue of $(\GL_n,\GL_4)$ skew duality~\eqref{eqn:glm_gln}. Namely,

\begin{equation} \label{eq:branching_dec}
 \bigwedge^k (\SL_n \otimes \Sp_4) \simeq \bigoplus_{\begin{minipage}{34pt}\tiny $|\lambda|=k,\\ \ell(\lambda)\leq n,\\ \lambda_1\leq m$ \end{minipage}} V_{\lambda}^{(n)}\otimes {\rm Res}^{\GL_4}_{\Sp_4}(V_{\lambda^t}^{(4)}), 
\end{equation}
and the multiplicity-freeness of this action is equivalent to the multiplicity-freeness of the restrictions ${\rm Res}^{\GL_4}_{\Sp_4}(V_{\lambda^t}^{(4)})$. In order to show this, one can once again impose exceptional isomorphisms of $\Spin$ groups. 

\begin{lemma} \label{lemm:branch_Sp4}
$\SL_n \otimes \Sp_4$ is SMF. 
\end{lemma}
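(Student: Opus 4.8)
The plan is to reduce the claim, via the decomposition~\eqref{eq:branching_dec}, to a branching statement for classical groups that can be settled by an exceptional isomorphism. By Remark~\ref{rem:saturated} it suffices to show that each exterior power $\bigwedge^k(\CC^n\otimes\CC^4)$ is multiplicity-free as an $\SL_n\times\Sp_4$ module. By~\eqref{eq:branching_dec} this power decomposes as $\bigoplus_\lambda V^{(n)}_\lambda\otimes{\rm Res}^{\GL_4}_{\Sp_4}(V^{(4)}_{\lambda^t})$, where $\lambda$ runs over the partitions with $|\lambda|=k$, $\ell(\lambda)\leq n$ and $\lambda_1\leq 4$.

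First I would observe that the $\SL_n$ factors contribute no multiplicities: two partitions $\lambda\neq\mu$ of the same size $k$ can never differ by a column of full height $n$, so the restrictions $V^{(n)}_\lambda|_{\SL_n}$ and $V^{(n)}_\mu|_{\SL_n}$ are pairwise non-isomorphic irreducible $\SL_n$ modules. Hence the multiplicity-freeness of $\bigwedge^k(\CC^n\otimes\CC^4)$ is equivalent to the multiplicity-freeness of each $\Sp_4$ module ${\rm Res}^{\GL_4}_{\Sp_4}(V^{(4)}_{\lambda^t})$. As $k$ and $n$ vary, $\lambda^t$ ranges over all partitions with at most four rows, so the whole statement reduces to the single assertion that ${\rm Res}^{\GL_4}_{\Sp_4}(V^{(4)}_\mu)$ is multiplicity-free for every such $\mu$.

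To prove this I would pass to the exceptional isomorphisms $\mathfrak{sl}_4\simeq\mathfrak{so}_6$ and $\mathfrak{sp}_4\simeq\mathfrak{so}_5$. Concretely, $\SL_4\simeq\Spin_6$ acts on $\bigwedge^2\CC^4\simeq\CC^6$, which carries the $\SL_4$-invariant symmetric form induced by the pairing $\bigwedge^2\CC^4\times\bigwedge^2\CC^4\to\bigwedge^4\CC^4\simeq\CC$, realising $\SL_4$ as the spin cover of the rotation group of $\CC^6$. The symplectic form defining $\Sp_4\subset\SL_4$ is a nonzero non-isotropic $\Sp_4$-fixed vector in $\CC^6$; its stabiliser in $\SO_6$ is a copy of $\SO_5$, and a dimension count ($\dim\Sp_4=10=\dim\SO_5$) identifies the inclusion $\Sp_4\subset\SL_4$ with $\SO_5\subset\SO_6$ (equivalently $\Spin_5\subset\Spin_6$). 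Under this identification ${\rm Res}^{\GL_4}_{\Sp_4}$ becomes the orthogonal branching ${\rm Res}^{\Spin_6}_{\Spin_5}$, and the classical fact that restriction from $\SO_m$ (resp.\ $\Spin_m$) to $\SO_{m-1}$ (resp.\ $\Spin_{m-1}$) is always multiplicity-free completes the argument.

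The routine parts are the decomposition bookkeeping and the dimension count; the step requiring genuine care is matching the inclusions, i.e.\ checking that $\Sp_4\subset\SL_4$ really corresponds to $\SO_5\subset\SO_6$ under $\SL_4\simeq\Spin_6$, and that the central/determinant discrepancy between $\GL_4$ and $\SL_4\simeq\Spin_6$ does not interfere (it does not, since $\Sp_4$ already lies in $\SL_4$ and the remaining central torus acts by scalars). This is exactly where the exceptional isomorphism does the real work: it replaces the pair $(\GL_{2n},\Sp_{2n})$, whose restriction is generically \emph{not} multiplicity-free for $n\geq 3$, by the always-multiplicity-free orthogonal branching available only through this low-rank coincidence.
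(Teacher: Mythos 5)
Your proof is correct and takes essentially the same route as the paper: reduce via $(\GL_n,\GL_4)$ skew duality~\eqref{eq:branching_dec} to the multiplicity-freeness of the restrictions ${\rm Res}^{\GL_4}_{\Sp_4}(V^{(4)}_\mu)$, then use the exceptional isomorphism $\SL_4\simeq\Spin_6$ on $\bigwedge^2\CC^4$ to identify $\Sp_4\subset\SL_4$ (as stabilizer of the symplectic form, a non-isotropic vector) with $\Spin_5\subset\Spin_6$, and invoke the well-known multiplicity-free branching $\Spin_{m+1}\downarrow\Spin_m$. The only difference is that you make explicit two points the paper leaves implicit, namely that distinct partitions of the same size give non-isomorphic $\SL_n$ modules and that the two inclusions really match up under the isomorphism.
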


\begin{proof}
The action of $\SL_4$ on $\bigwedge^2\CC^4$ identifies $\SL_4$ with $\Spin_6$. Since $\Sp_4$ fixes a symplectic form $\omega$, the restriction to this subgroup gives an irreducible representation on $\SO_5$. This embeds $\Spin_5$ into $\Spin_6$ as the subgroup $\{ g\in\Spin_6: g\omega = \omega \}$. Hence, the multiplicities for $\SL_4\downarrow \Sp_4$ are the same as those for $\Spin_6\downarrow\Spin_5$. It is well-known that the restriction $\Spin_{m+1}\downarrow\Spin_m$ is multiplicity-free for all $m$. ~\cite[p. 267ff]{Boe}. (See also [GW, p. 351ff].)
\end{proof}

\begin{remark}
The exposition in this section considerably profited from the referees' valuable remarks. Initially, we proved the SMF property of $\SL_3\otimes\SO_{2n+1}$ combinatorially by means of branching rules due to Littlewood~\cite{Li} and Kings modification rules. Also for $\SL_n\otimes\Sp_4$ we proved the multiplicity-freeness of branching rules explicitly using a description of~\cite[Theorem 12.1]{Su}. We thank the referee for pointing out the possible applications of $(\OG_n,\mathfrak{so}_{2m})$ skew duality in connection with the exceptional isomorphisms.
\end{remark}

\section{Reducible SMF modules} \label{sec:reducible}

While all irreducible SMF modules can simply be found by building products of simple SMF modules, the case of reducible representations is more involved. First, we have to eliminate trivial examples. If $(G_i,V_i)$ is SMF for $i=1,2$, then $(G_1\times G_2, V_1 \oplus V_2)$ is also SMF due to the isomorphism $\bigwedge (V_1 \oplus V_2) \simeq \bigwedge(V_1) \otimes \bigwedge (V_2)$.

\begin{definition}
A representation $(G,V)$ is called {\itshape indecomposable}, if it is not geometrically equivalent to $(G_1\times G_2, V_1\oplus V_2)$, with $V_i\neq 0$ and $G_i$ acting trivially on $V_j$ for $i\neq j$.
\end{definition}

In the following we are therefore only concerned with classifying indecomposable modules. There remains a difficulty: If in the above example, the $G_i$ are simple and one has an additional torus $\CC^*$ acting on $V_i$ as multiplication by $t^{a_i}$ with $a_i\neq 0$, this yields an indecomposable representation. But one can easily deduce that if we add a further torus this is again geometrically equivalent to a decomposable representation via an automorphism of $\CC^* \times \CC^*$.
This is one of the main reasons, why we consider saturated (indecomposable) representations only. For a discussion of the non-saturated case see Section~\ref{sec:non_saturated}. 

Recall that the exterior algebra of $V = V_1\oplus\dots\oplus V_l$ has a $l$-fold grading given by the subspaces 
\[ \bigwedge^{(r_1,\dots,r_l)} V = \bigwedge^{r_1} V_1 \otimes\dots\otimes \bigwedge^{r_l} V_l, \]
where $r_1 + \dots + r_l \leq \dim V$ and that a saturated representation of a reductive group $G$ is SMF whenever all the $\bigwedge^{(r_1,\dots,r_l)} V$ 
are multiplicity-free as modules for the semisimple part of $G$ (see also Remark~\ref{rem:saturated}).

A difficulty comes from the incompatibility of geometric equivalence with direct sums, e.g. 
$(\SL_n, \CC^n\oplus \CC^n) \sim (\SL_n,\CC^n\oplus\CC^{n*})$ holds only for $n = 2$. Fortunately, this can be disregarded due to the following fact.

\begin{lemma} \label{lemm:dual}
A saturated representation $(G, V_1\oplus V_2)$ is SMF if and only if $(G, V_1\oplus V_2^*)$ is SMF.
\end{lemma}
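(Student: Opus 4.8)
The plan is to use the Clifford-algebra characterization of SMF modules given in Proposition~\ref{prop:clifford}, which says that $(G,V)$ is SMF if and only if $Cl(V\oplus V^*)^G$ is commutative. The key observation is that for a saturated representation, the relevant invariant algebra depends only on the pair $V\oplus V^*$ together with its canonical symmetric form, and this data is unchanged when we replace a summand $V_2$ by its dual $V_2^*$. More precisely, I would set $W = V_1\oplus V_2$ and $W' = V_1\oplus V_2^*$ and exhibit a $G$-equivariant isometry between $(W\oplus W^*, \langle\cdot,\cdot\rangle)$ and $(W'\oplus (W')^*, \langle\cdot,\cdot\rangle)$. Since both orthogonal spaces decompose as $V_1\oplus V_1^* \oplus V_2 \oplus V_2^*$ with the same symmetric pairing~\eqref{eqn:symmetric-pairing}, the only thing that changes is the labeling of which factors are regarded as "$V$" versus "$V^*$".

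The core of the argument is therefore to produce a $G$-module isomorphism $\Phi\colon W\oplus W^* \to W'\oplus (W')^*$ that preserves the symmetric form. I would define $\Phi$ to be the identity on the $V_1\oplus V_1^*$ part and to swap the two copies $V_2$ and $V_2^*$ on the remaining part, i.e. send $V_2\subseteq W$ to $V_2\subseteq (W')^*$ and $V_2^*\subseteq W^*$ to $V_2^*\subseteq W'$. One checks directly from the definition~\eqref{eqn:symmetric-pairing} that the natural pairing between a space and its dual is symmetric in exactly the way needed, so that $\Phi$ is an isometry. Crucially, $\Phi$ is $G$-equivariant: because the action is \emph{saturated}, the torus factor acting on $V_2$ and the one acting on $V_2^*$ are independent, so interchanging these two summands can be absorbed into a relabeling of the torus factors without altering the semisimple part $[G,G]$. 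This is where saturation enters, and it is the step that would fail for a general (non-saturated) action where the torus couplings between summands are constrained.

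The isometry $\Phi$ induces an isomorphism of Clifford algebras $Cl(W\oplus W^*)\cong Cl(W'\oplus (W')^*)$ that intertwines the two $G$-actions, and hence an isomorphism of invariant subalgebras
\[ Cl(W\oplus W^*)^G \;\cong\; Cl\bigl(W'\oplus (W')^*\bigr)^G. \]
In particular, one algebra is commutative if and only if the other is. Applying Proposition~\ref{prop:clifford} to each side, this says precisely that $(G, V_1\oplus V_2)$ is SMF if and only if $(G, V_1\oplus V_2^*)$ is SMF.

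The step I expect to be the main obstacle is the verification that $\Phi$ can be made $G$-equivariant, since this is where the saturation hypothesis is genuinely used: one must confirm that swapping $V_2$ and $V_2^*$ corresponds to an honest automorphism of $G$ (coming from inverting the relevant torus character together with the duality automorphism $\varphi$ described after Definition~\ref{def:geom_equiv}) that fixes the action on $V_1$. The remaining computations—checking that $\Phi$ preserves~\eqref{eqn:symmetric-pairing} and that an isometry of quadratic spaces lifts to a $G$-equivariant Clifford isomorphism—are routine and follow the pattern already used in the proof of Proposition~\ref{prop:clifford}.
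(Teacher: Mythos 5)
Your proposal is correct and takes essentially the same route as the paper: the paper's proof likewise invokes Proposition~\ref{prop:clifford} and observes that $Cl(V\oplus V^*)\simeq \End(\bigwedge V)$, together with its $G$-action, is the same whether $V=V_1\oplus V_2$ or $V=V_1\oplus V_2^*$ (both Clifford algebras live on $V_1\oplus V_1^*\oplus V_2\oplus V_2^*$ with the pairing~\eqref{eqn:symmetric-pairing}), so the invariant subalgebras coincide and commutativity transfers. Your write-up only makes explicit what the paper dismisses with ``obviously'' --- the isometry identifying the two orthogonal spaces and the inversion of the relevant torus factor that absorbs the swap of $V_2$ and $V_2^*$, which is indeed exactly where saturation is used.
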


\begin{proof} Let us denote by $V$ either $V_1\oplus V_2$ or $V=V_1 \oplus V_2^*$. Obviously, the structure of $Cl(V\oplus V^*) \simeq \End(\bigwedge V)$ does not depend on this choice. Taking invariants, the claim follows from Proposition~\ref{prop:clifford}.
\end{proof}

The following definition is useful to describe the indecomposable representations in a systematic way.

\begin{definition}
Let $G$ be a group with $k$ simple factors and $V = V_1 \oplus \dots \oplus V_l$ the decomposition into irreducible components for $G$. The {\itshape representation diagram $\mathcal{G}$} of $(G,V)$ is a bipartite graph, given as follows:

Put $k$ vertices, each for one simple factor in $G$, in a horizontal line and put $\ell$ vertices, each for one irreducible submodule of $V$, in a line below. The top (resp. bottom) vertices are labeled by the simple factors (resp. irreducible summands). A pair of a top and bottom vertex is connected if and only if $G_i$ acts nontrivially on $V_j$. 
\end{definition}

\begin{remark}
An easy consequence of the above definitions is the fact that the connected representation diagrams are in $1-1$ correspondence with indecomposable representations and can thus be identified. Therefore the classification problem is reduced to the determination of all connected diagrams that lead to pairwise non-equivalent representations.
\end{remark}

It is reasonable to interpret subgraphs of $\mathcal{G}$ in terms of the underlying representations $(G,V)$. For every connected pair $(G_i,V_j)$ we can delete the corresponding edge in $\mathcal{G}$. If it happens that $V_j$ is no longer connected to any other group factor, we omit $V_j$ and the resulting graph $\mathcal{G}'$ stands for a $G$-submodule $V'$ of $V$. If, on the contrary, $G_i$ has no connecting edges left, $\mathcal{G}'$ is some kind of a restriction to a subgroup.

\begin{example}
The representation diagram $\mathcal{G}$ of the irreducible $G_1\times G_2$-module $V_1\otimes V_2$ is given by
\[ \repV{G_1}{G_2}{V_1\otimes V_2}. \]
Let $\mathcal{G}'$ be the subgraph of $\mathcal{G}$ where the edge adjacent to $G_2$ and $V_1\otimes V_2$ is deleted. The $G_2$ vertex is no longer connected to any other vertex, so it can be omitted because we are only interested in indecomposable representations. So, $\mathcal{G}'$ is given by
\[ \repI{G_1}{V_1}. \]

In this particular case, it follows from Lemma~\ref{lemm:levi} that if $\mathcal{G}$ is SMF, $\mathcal{G}'$ is necessarily SMF too. This is also true for arbitrary graphs:
\end{example}

\begin{prop} \label{prop:subgraph}
If $\mathcal{G}$ is the representation diagram corresponding to a SMF module, then also every representation belonging to a connected subgraph $\mathcal{G'}$ of $\mathcal{G}$ must be SMF.
\end{prop}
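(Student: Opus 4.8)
The plan is to reduce the claim to two elementary graph operations, since every connected subgraph $\mathcal{G}'$ of $\mathcal{G}$ can be reached from $\mathcal{G}$ by a finite sequence of edge deletions, each followed (if necessary) by the removal of a now-isolated vertex. Because the SMF property is preserved under passing to an arbitrary connected subgraph if and only if it is preserved under each single such step, it suffices to show that removing one edge (and the possibly-isolated summand or group factor it exposes) cannot create a multiplicity. I would phrase the two basic moves as follows. First, deleting an edge so that some irreducible summand $V_j$ becomes disconnected and is then omitted corresponds to passing from $V$ to the $G$-submodule $V' = \bigoplus_{i \neq j} V_i$. Second, deleting an edge so that some group factor $G_i$ becomes disconnected and is then omitted corresponds to restricting the action to the subgroup generated by the remaining factors, i.e. to a Levi-type sub-situation.

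First I would handle the submodule move. Here I claim $\bigwedge V'$ sits inside $\bigwedge V$ as a $G$-stable subalgebra, via the inclusion of the graded piece $\bigwedge V' \otimes \bigwedge^0 V_j \hookrightarrow \bigwedge V$ coming from the tensor factorization $\bigwedge V \simeq \bigwedge V' \otimes \bigwedge V_j$. Since $\bigwedge V$ is multiplicity-free as a $G$-module by hypothesis, and $\bigwedge V'$ is a $G$-submodule of it, every irreducible occurs in $\bigwedge V'$ with multiplicity at most one; hence $V'$ is SMF. (Strictly, because we are in the saturated setting, one checks the statement componentwise on the $l$-fold graded pieces as in Remark~\ref{rem:saturated}: a multiplicity in some $\bigwedge^{(r_1,\dots,r_l)} V'$ for the semisimple part would already be a multiplicity in the corresponding piece of $\bigwedge V$ with $r_j = 0$.)

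Second I would handle the restriction move, and this is where I expect the real content to lie, because omitting a group factor $G_i$ is not simply passing to a submodule but genuinely changing the group acting. The cleanest route is to invoke Lemma~\ref{lemm:levi}: up to a torus, the subgroup of $G$ generated by the remaining simple factors is (isomorphic to) the semisimple part of a Levi component of $G$, so each $V_j$ restricts to a direct sum of irreducibles for it, and choosing the appropriate irreducible constituents $W_j$ exactly reproduces the representation attached to $\mathcal{G}'$. Lemma~\ref{lemm:levi} then guarantees that this direct sum $W = \bigoplus_j W_j$ is SMF. The main obstacle is to make precise that the representation diagram $\mathcal{G}'$ obtained after the deletion does correspond to such an admissible choice of Levi constituents $W_j$, i.e. that the edges surviving in $\mathcal{G}'$ record exactly the nontrivial actions of the retained factors on the retained summands; once this bookkeeping is set up, Lemma~\ref{lemm:levi} applies verbatim.

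Finally I would assemble the induction. Let $\mathcal{G}'$ be any connected subgraph of $\mathcal{G}$. Order its missing edges and delete them one at a time from $\mathcal{G}$; after each deletion, if an isolated summand vertex appears, apply the submodule move, and if an isolated group vertex appears, apply the restriction move. Connectedness of the final $\mathcal{G}'$ is what lets us discard isolated vertices without altering the indecomposable representation it encodes. Each intermediate graph is itself a representation diagram of an SMF module by the two moves just established, so by induction on the number of deleted edges the terminal graph $\mathcal{G}'$ is SMF as well. The only subtlety worth a sentence is that intermediate graphs need not be connected, so one applies the two moves to each connected component separately; since SMF for a direct sum of decoupled blocks reduces to SMF of each block (by $\bigwedge(V_1 \oplus V_2) \simeq \bigwedge V_1 \otimes \bigwedge V_2$), this causes no difficulty.
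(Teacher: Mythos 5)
Your reduction to single edge deletions is the correct skeleton (it is also how the paper argues), and each of your two moves is individually sound: when a summand vertex becomes isolated, the reduced module is the $G$-submodule $\bigoplus_{i\neq j}V_i$ of $V$, and when a group vertex becomes isolated, Lemma~\ref{lemm:levi} with the Levi $L = T_j\times\prod_{p\neq j}G_p$ does the job. The gap is that these two moves do not exhaust edge deletions. Deleting the edge between $G_j$ and $V_i = W_i^1\otimes\dots\otimes W_i^k$ when \emph{both} endpoints keep other edges --- for instance passing from a diagram of type (X) to its (N) subdiagram, which is exactly how the proposition is used in Appendix~\ref{app:reducible} --- produces a representation of the \emph{same} group $G$ in which $V_i$ is replaced by $\tilde V_i = W_i^1\otimes\dots\otimes\CC\otimes\dots\otimes W_i^k$. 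This reduced module is neither a $G$-submodule of $V$ (the summand $V_i$ is $G$-irreducible, and an embedded copy of $\tilde V_i$ is not even $G_j$-stable) nor reachable via Lemma~\ref{lemm:levi}: any Levi subgroup whose restriction kills the action of $G_j$ on $V_i$ necessarily cuts down the $G_j$-factor of \emph{every} other summand as well, so you cannot switch off $G_j$ on $V_i$ alone by a Levi restriction. Your assembly paragraph silently assumes that every deletion isolates a vertex (``if an isolated summand vertex appears \dots if an isolated group vertex appears \dots''), so this case simply falls through, and it is the essential one.

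The paper closes exactly this case with $(\GL_n,\GL_m)$ skew duality (Proposition~\ref{prop:glm_gln}): inside
\[
\bigwedge^{r_i}V_i \;=\; \bigwedge^{r_i}\bigl(W_i^j\otimes\tilde V_i\bigr)\;=\;\bigoplus_{|\lambda|=r_i}S_\lambda(W_i^j)\otimes S_{\lambda^t}(\tilde V_i),
\]
the summand for $\lambda=(r_i)$ is $S^{r_i}(W_i^j)\otimes\bigwedge^{r_i}\tilde V_i$. Hence if some multigraded piece $\tilde U = \bigwedge^{r_1}V_1\otimes\dots\otimes\bigwedge^{r_i}\tilde V_i\otimes\dots\otimes\bigwedge^{r_l}V_l$ of the reduced module fails to be multiplicity-free for the semisimple part, then $S^{r_i}W_i^j\otimes\tilde U$ is a $G$-submodule of the corresponding piece $U$ of $\bigwedge V$, and tensoring a non-multiplicity-free module with any module leaves it non-multiplicity-free; this contradicts the SMF property of $(G,V)$ in view of Remark~\ref{rem:saturated}. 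Note that this single argument also subsumes both of your moves (it applies whether or not a vertex becomes isolated), which is why the paper's proof needs no case distinction; to repair your write-up you would have to add this, or an equivalent, argument for the non-isolating deletions.
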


\begin{proof} It is an easy consequence of the above considerations that every subgraph of $\mathcal{G}$ can be obtained by a sequence of edge removals. We can therefore argue by induction, that the graph resulting from the removal of one edge still
corresponds to a SMF representation.

Let $G = G_1 \times\dots\times G_k$ and $V = V_1 \oplus\dots\oplus V_l$ with $V_i = W_i^1 \otimes\dots\otimes W_i^k$. Deleting one edge means, that for a unique simple factor $G_j$ and a unique submodule $V_i$ one has a ``reduced'' $G$-operation on $\tilde{V}_i = W_i^1 \otimes\dots\otimes W_i^{j-1}\otimes\CC\otimes W_i^{j+1} \otimes\dots\otimes W_i^k$.
So if we assume, by contradiction, that some subspace $\tilde{U} = \bigwedge^{r_1} V_1 \otimes\dots\otimes \bigwedge^{r_i} \tilde{V}_i \otimes\dots\otimes \bigwedge^{r_l} V_l$ is not multiplicity-free, we have to show that this is also the case for
\[ U:= \bigwedge^{r_1} V_1 \otimes\dots\otimes \bigwedge^{r_i} V_i \otimes\dots\otimes \bigwedge^{r_l} V_l. \]

There is a homomorphism from $G$ to the group $H:=\GL(V_1) \times\dots\times \GL(W_i^j) \times \GL(\tilde{V}_i) \times\dots\times \GL(V_l)$, so $U$ is a module for $H$. If $S_\lambda(\cdot)$ denotes the Schur functor corresponding to the partition $\lambda$, we get
\[ U \simeq \bigwedge^{r_1} V_1 \otimes\dots\otimes \left( \bigoplus_{|\lambda|=r_i} S_\lambda(W_i^j) \otimes S_{\lambda^t}(\tilde{V}_i) \right) \otimes\dots\otimes \bigwedge^{r_l} V_l \]
by $(\GL_n,\GL_m)$ skew duality \ref{prop:glm_gln}. In particular, $S^{r_i} W_i^j \otimes \tilde{U} \simeq \bigwedge^{r_1} V_1 \otimes\dots\otimes \left(S^{r_i} W_i^j \otimes \bigwedge^{r_i} \tilde{V}_i \right) \otimes\dots\otimes \bigwedge^{r_l} V_l$ is a $G$ submodule of $U$ which fails to be multiplicity-free. 
\end{proof}

Now, we can state the main result of this section.

\begin{theorem} \label{thm:SMF_red}
Let $V$ be a reducible indecomposable saturated representation of a reductive group $G$. We can assume that $G = (\CC^*)^k \times [G,G]$.  

\begin{itemize}

\item[a)] If $G$ is connected, then (up to geometric equivalence and a possible exchange of irreducible submodules by their duals) $([G,G],V)$ is given by one of the diagrams in the following list: \newline
$(\alpha)$ $\repA{\SL_n}{\CC^{n}}{\CC^n}$,                                        \hfill
$(\beta)$ $\repA{\SL_2}{\CC^2}{\!\!S^2\CC^2}$,                                    \hfill
$(\gamma)$ $\repA{\SL_3}{\CC^{3}}{S^2\CC^3}$,                                     \hfill
$(\delta)$ $\repA{\SL_2}{\CC^2}{S^3\CC^2}$,                                       \newline
$(\epsilon)$ $\repA{\SL_2}{\CC^2}{S^4\CC^2}$,                                     \hfill
$(\rho)$ $\repA{X_m}{\CC^{m}}{\CC^{m}}$,                             \hfill
$(\eta)$ $\repN{\SL_n}{\SL_m}{\CC^{n}}{\CC^n}{\CC^m}$\,\,,                        \newline
$(\kappa)$ $\repN{\SL_2}{\SL_n}{S^2\CC^2}{\CC^2}{\CC^n}$,                         \hfill
$(\vartheta)$ $\repN{\SL_2}{X_n}{\CC^2}{\!\!\CC^2}{\CC^n}$,   \hfill
$(\sigma)$ $\repN{\SL_2}{X_n}{\!\!\!S^2\CC^2}{\!\CC^2}{\CC^n}$, \newline
$\repW{\SL_n}{\SL_2}{\SL_m}{\CC^n}{\CC^2}{\CC^2}{\CC^m}$,                         \hfill 
$\repW{\SL_n}{\SL_2}{X_m}{\CC^n}{\CC^2}{\CC^2}{\CC^m}$,                            \hfill 
$\repW{X_{n}}{\SL_2}{X_{m}}{\CC^{n}}{\CC^2}{\CC^2}{\CC^{m}}$ \bigskip

\noindent Here, $X_n = \SO_n$ with $n$ be an odd integer. \bigskip

\item[b)] Every representation in a) with $X_n = \OG_n$ and $n$ be an arbitrary integer gives an instance of a SMF representation for a nonconnented group. 

\end{itemize}
\end{theorem}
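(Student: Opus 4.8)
The plan is to prove Theorem~\ref{thm:SMF_red} in two parts, establishing the completeness of the list in (a) and then deriving (b) from the structural machinery already assembled. For part (a), the strategy is to reduce the classification to a finite combinatorial search over connected bipartite representation diagrams, using Proposition~\ref{prop:subgraph} as the central tool. First I would observe that any indecomposable saturated SMF module corresponds to a connected representation diagram $\mathcal{G}$, and that by the monotonicity criterion (Proposition~\ref{prop:subgraph}) every connected subgraph of $\mathcal{G}$ must again be SMF. This immediately bounds the local complexity: each irreducible summand $V_j$ can be attached to at most a few simple factors, since any summand $V_i = W_i^1 \otimes \cdots \otimes W_i^k$ that is a genuine tensor product of three or more nontrivial factors would contain, as a subgraph, an irreducible SMF module of the form treated in Theorem~\ref{thm:SMF_irr}, and the list there admits only the products $\SL_m \otimes \SL_n$, $\SL_2 \otimes \SO_n$, $\SL_3 \otimes \SO_{2m+1}$, and $\SL_n \otimes \Sp_4$. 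Thus every edge configuration at a bottom vertex is severely constrained, and the building blocks are exactly the irreducible SMF modules together with the single-factor SMF modules from Theorem~\ref{thm:howeslist}.

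With these local constraints in hand, I would enumerate the admissible diagrams. The subgraph condition forces each bottom vertex to have degree $\leq 2$ (degree $3$ would embed a forbidden triple tensor product, excluded via Corollary~\ref{cor:products}), and each top vertex's local multiplicities are controlled by requiring that any subdiagram consisting of one simple factor with its attached summands be SMF — this restricts which representations $W_i^j$ may coexist on a single factor. The enumeration then proceeds by the number of simple factors and the degrees of the vertices: single-factor diagrams $(\alpha)$–$(\rho)$ come from pairing two summands on one group; two-factor diagrams $(\eta)$, $(\kappa)$, $(\vartheta)$, $(\sigma)$ arise from linking two groups through shared or separate summands; and the three-factor ``path'' diagrams come from chaining $\SL_2$ (or $\Sp_4 \sim \SO_5$) between two flanking classical groups. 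Throughout, Lemma~\ref{lemm:dual} lets me replace any summand by its dual without affecting the SMF property, which both prunes redundant cases and justifies the ``possible exchange of irreducible submodules by their duals'' clause. The genuinely positive direction — verifying that each listed diagram \emph{is} SMF — is relegated to the case-by-case computations of Appendix~\ref{app:reducible}, where each multigraded piece $\bigwedge^{r_1} V_1 \otimes \cdots \otimes \bigwedge^{r_l} V_l$ is checked to be multiplicity-free for the semisimple part, using the skew-duality decompositions of Section~\ref{sec:dual}.

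For part (b), the key point is the geometric independence of the Clifford-algebra characterization from the distinction between $\SO_n$ and $\OG_n$. Recall from Lemma~\ref{lemm:SL3_Om} and Lemma~\ref{lemm:Olmost_dual} that the full orthogonal group $\OG_n$ behaves better than $\SO_n$: replacing $\SO_n$ (for odd $n$) by $\OG_n$ and allowing $n$ arbitrary preserves, and in fact improves, the almost-dual-pair structure on each exterior power. The plan is to argue that since each diagram in (a) is SMF with $X_n = \SO_n$ (odd $n$), the corresponding module with $X_n = \OG_n$ remains SMF for every $n$: the relevant multigraded pieces decompose via $(\OG_n,\mathfrak{so}_{2m})$ skew duality (Proposition~\ref{prop:duality_explicit}), and the multiplicity-freeness of the $\OG_n$-side follows from the almost-dual-pair lemmas, which hold uniformly in $n$ without the parity restriction needed for $\SO_n$. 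Since $\OG_n$ is disconnected, this exhibits SMF representations of nonconnected groups, as claimed.

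The main obstacle I anticipate is not the positive verification (which is mechanical once the skew-duality decompositions are invoked) but the \emph{completeness} argument in (a): one must rule out every connected diagram not on the list. The delicate cases are those where a forbidden configuration is not ruled out by a single subgraph but only by a combination of multigraded multiplicity computations — for instance, distinguishing when two summands attached to the same classical group force a repeated irreducible in some $\bigwedge^{(r_1,\dots,r_l)}V$. Here the parity arguments from the proof of Lemma~\ref{lemm:SL3_SO2n+1}, tracking the degree $\deg(\omega_\lambda) - \deg(\omega_\tau)$ modulo $2$, become essential to separate isotypic components by exterior degree; the hardest step is organizing these degree-parity obstructions systematically across all admissible top-vertex labelings so that the finite search provably terminates with exactly the listed diagrams.
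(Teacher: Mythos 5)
Your high-level architecture (Proposition~\ref{prop:subgraph} to constrain diagrams, skew duality and the almost-dual-pair lemmas for the positive verifications, Lemma~\ref{lemm:dual} to handle dualization) matches the paper's, but there are two genuine gaps. The first is an outright error: you claim the subgraph condition forces every bottom vertex to have degree $\leq 2$ because ``degree $3$ would embed a forbidden triple tensor product, excluded via Corollary~\ref{cor:products}.'' This is false. Theorem~\ref{thm:SMF_irr} contains an irreducible SMF module with three simple factors, namely $\SL_2\otimes\SO_4 \sim \SL_2\otimes\SL_2\otimes\SL_2$, and Corollary~\ref{cor:products} only says that factors of an irreducible SMF product are SMF --- it excludes nothing here. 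Consequently, connected diagrams with a degree-$3$ bottom vertex pass every subgraph test built from the irreducible classification and can only be eliminated by explicit computation. The paper does exactly this: it checks $S^k\SL_2 \oplus \SL_2\otimes\SL_2\otimes\SL_2$ for $k=1,\dots,4$ and finds that $\bigwedge^3 \SL_2^{\otimes 3} = [1,1,1]+[1,1,3]+[1,3,1]+[3,1,1]$ forces a multiplicity in $\bigwedge^{(1,3)}$. Your a priori exclusion silently skips a family of candidates that the classification must rule out.

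The second gap is that the completeness argument --- which you yourself identify as the hard part --- is not actually supplied, and the tool you propose for it is the wrong one. The paper's exhaustion (Appendix~\ref{app:reducible}) rests on two reduction lemmas you never formulate: (i) if $((\CC^*)^2\times G, V\oplus V)$ is SMF then $(\CC^*\times\SL_2\times G, \CC^2\otimes V)$ is SMF, used contrapositively, and (ii) if some $\bigwedge^k V$ is reducible then $V\oplus V$ is never SMF; together with tables of explicit non-multiplicity-free subspaces $\bigwedge^{(r_1,\dots,r_l)}V$, organized by diagram type $(\Lambda)$, (N), (X), (W) and by number of summands. Nothing in that exhaustion uses the degree-parity argument of Lemma~\ref{lemm:SL3_SO2n+1}, which you propose as the organizing principle; that parity argument belongs to the irreducible case ($\SL_3\otimes\SO_{2m+1}$) and does not separate the reducible counterexamples, which are killed by direct multiplicity computations. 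Also a small misattribution: the positive verifications of the listed diagrams are carried out in the main-text lemmas following the theorem (via $(\GL_n,\GL_m)$ skew duality, the almost-dual-pair structure of $\SL_2\otimes X_n$, and the $\mathfrak{sl}_2^{(1)}\times\mathfrak{sl}_2^{(2)}$ grading argument for $(\rho)$), not in Appendix~\ref{app:reducible}, which contains only the exhaustion. Your part~(b) reasoning is essentially correct and agrees with the paper: the lemmas for modules with a factor $X_n$ are proved uniformly for $X_n=\OG_n$ with arbitrary $n$, since Lemma~\ref{lemm:Olmost_dual} carries no parity restriction.
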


\begin{proof}
For the modules $(\beta)-(\epsilon)$ the SMF property can be verified by explicit calculations. In the subsequent lemmas we treat all the remaining representations that are labeled by Greek letters and also the last diagram of type W. From this it follows that the other two diagrams consisting of three group factors are SMF.

In Appendix~\ref{app:reducible} it is shown that there are no other saturated indecomposable SMF modules than those on the above list. \end{proof}


\begin{lemma}
Representations $(\alpha)$ and $(\eta)$ of Theorem~\ref{thm:SMF_red} are SMF. 
\end{lemma}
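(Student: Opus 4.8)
The plan is to prove the two representations $(\alpha)$ and $(\eta)$ SMF separately, exploiting the $(\GL_n, \GL_m)$ skew duality from Proposition~\ref{prop:glm_gln} in both cases. For each diagram I would use the characterization from Remark~\ref{rem:saturated}: it suffices to check that every multidegree component $\bigwedge^{r_1} V_1 \otimes \bigwedge^{r_2} V_2$ is multiplicity-free as a module for the semisimple part.

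For $(\alpha)$, the representation is $\SL_n$ acting on two copies of $\CC^n$, i.e. on $\CC^n \otimes \CC^2$. The key observation is that this is exactly the setting of $(\GL_n,\GL_2)$ skew duality: under the enlarged group $\GL_n \times \GL_2$ the exterior algebra $\bigwedge(\CC^n \otimes \CC^2)$ decomposes multiplicity-free via~\eqref{eqn:glm_gln}, with summands $V_\lambda^{(n)} \otimes V_{\lambda^t}^{(2)}$ where $\ell(\lambda) \le 2$ and $\lambda_1 \le n$. Since $\ell(\lambda) \le 2$, each such $\lambda$ has conjugate $\lambda^t$ with at most two columns, so $V_{\lambda^t}^{(2)}$ is an irreducible $\GL_2$ representation that is determined by $\lambda$; crucially, distinct $\lambda$ give distinct pairs $(\lambda, \lambda^t)$. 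Restricting to the torus $\CC^*$ acting diagonally on the second factor picks out the bidegree, and one sees that each $V_\lambda^{(n)}$ appears at most once in each $\bigwedge^{(r_1,r_2)}$, so the action is SMF.

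For $(\eta)$, the representation is $\SL_n \times \SL_m$ acting on $\CC^n \oplus (\CC^n \otimes \CC^m)$, where the first summand is the natural $\SL_n$ module and the second is the tensor product. I would analyze each bigraded piece $\bigwedge^{r}\CC^n \otimes \bigwedge^{s}(\CC^n \otimes \CC^m)$ separately. For the second tensor factor I apply $(\GL_n,\GL_m)$ skew duality to write $\bigwedge^{s}(\CC^n \otimes \CC^m) = \bigoplus_{|\mu|=s} V_\mu^{(n)} \otimes V_{\mu^t}^{(m)}$ with $\ell(\mu)\le m$, $\mu_1 \le n$. The first factor $\bigwedge^r \CC^n = V_{(1^r)}^{(n)}$ is a single fundamental representation of $\SL_n$. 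So the $\SL_n \times \SL_m$ decomposition of the bigraded piece is $\bigoplus_\mu \big( V_{(1^r)}^{(n)} \otimes V_\mu^{(n)} \big) \otimes V_{\mu^t}^{(m)}$, and I must tensor $V_{(1^r)}^{(n)} \otimes V_\mu^{(n)}$ inside $\GL_n$ via the Pieri rule. The multiplicity of a given irreducible $V_\nu^{(n)} \otimes V_{\mu^t}^{(m)}$ then equals the number of $\mu$ (with $|\mu|=s$) such that $V_\nu^{(n)}$ appears in $V_{(1^r)}^{(n)} \otimes V_\mu^{(n)}$ and $V_{\mu^t}^{(m)}$ is the prescribed $\SL_m$ type. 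Since $V_{\mu^t}^{(m)}$ already determines $\mu$ (as $\SL_m$ highest weights record $\mu$ up to adding columns of height $m$, but under the constraints $\ell(\mu)\le m$ the partition $\mu$ is pinned down once the bidegree is fixed), I am reduced to the Pieri multiplicity of adding a single column $(1^r)$ to $\mu$, which by the Pieri rule is always $0$ or $1$.

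The main obstacle is precisely the bookkeeping in $(\eta)$: the $\SL_m$ label $V_{\mu^t}^{(m)}$ determines $\mu^t$, hence $\mu$, only up to the ambiguity of adding full-height columns (i.e. $\SL_m$ sees $\mu^t$ modulo $(1^m)$), so I must verify that within a fixed exterior bidegree $(r,s)$ this ambiguity does not produce two distinct $\mu$ with the same $\SL_m$ type yet both contributing $V_\nu^{(n)}$. Here the degree constraint $|\mu| = s$ together with $\ell(\mu) \le m$ fixes $\mu$ uniquely given $\mu^t \bmod (1^m)$, since changing $\mu^t$ by adding a column of height $m$ changes $|\mu|$. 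Hence no collision occurs, and the Pieri rule's multiplicity-one property finishes the argument. I would present these two cases as the two halves of the proof, each concluding via the appropriate multiplicity-free combinatorics.
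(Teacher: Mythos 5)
Your proof is correct, but it takes a genuinely different route from the paper's. The paper proves $(\eta)$ by embedding the whole module into a single exterior algebra: it considers $\SL_n\times\SL_{m+1}$ acting on $\CC^n\otimes\CC^{m+1}$, decomposes each $\bigwedge^k(\CC^n\otimes\CC^{m+1})$ by $(\GL_n,\GL_{m+1})$ skew duality, and then restricts to the Levi subgroup $\GL_m\subseteq\GL_{m+1}$ (under which $\CC^n\otimes\CC^{m+1}$ becomes precisely $\CC^n\oplus(\CC^n\otimes\CC^m)$), invoking the multiplicity-free interlacing branching rule for $\GL_{m+1}\downarrow\GL_m$; the case $(\alpha)$ is then not computed at all, but deduced from $(\eta)$ by the subgraph monotonicity of Proposition~\ref{prop:subgraph}. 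You instead analyze each bidegree component directly: for $(\eta)$ you apply $(\GL_n,\GL_m)$ skew duality only to the tensor summand and then use the Pieri rule for $\bigwedge^r\CC^n\otimes V_\mu^{(n)}$, and for $(\alpha)$ you use $(\GL_n,\GL_2)$ skew duality together with the structure of $\GL_2$-weight spaces. The two arguments rest on equivalent combinatorics --- the interlacing condition for $\GL_{m+1}\downarrow\GL_m$ is exactly the (transposed) strip condition of the Pieri rule --- but your organization buys a self-contained proof that makes the $\GL$-versus-$\SL$ bookkeeping explicit (your degree argument ruling out labels that differ by full-height columns is exactly the point the paper leaves implicit), while the paper's organization buys economy: one branching rule handles $(\eta)$, and $(\alpha)$ follows formally from Proposition~\ref{prop:subgraph} with no further work. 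Two small points you should make explicit in your $(\alpha)$ argument: the torus you restrict to must be the full diagonal torus $(\CC^*)^2$ of $\GL_2$, not the scalar $\CC^*$ (which only records total degree), and the reason each $V_\lambda^{(n)}$ occurs at most once in a fixed bidegree is that every weight space of an irreducible $\GL_2$-module is one-dimensional.
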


\begin{proof} Consider the action of $\SL_n \times \SL_{m + 1}$ on $\CC^n\otimes \CC^{m+1}$. For any $0\leq k \leq n(m + 1)$ we have

\begin{equation}
\bigwedge^k(\CC^n \otimes \CC^{m+1}) = \bigoplus_{|\mu| = k,~ \ell(\mu) \leq n, ~\mu_1 \leq m + 1}V^{(n)}_{\mu}\otimes V^{(m+1)}_{\mu^t}
\end{equation}
From this we can obtain the desired decomposition if we consider the Levi subgroup of $\SL_{m+1}$ that corresponds to deleting the rightmost node in the Dynkin diagram $A_m$. Then, the interlacing condition for $\GL_{m+1} \downarrow \GL_m$~\cite[Theorem 8.1.1, p. 350]{GW} says that $\Gamma^{(m+1)}_{\mu}$ breaks up under $\SL_m$ into the direct sum of all representations $\Gamma^{(m)}_{\nu}$ such that 
\[ \mu_1 \geq \nu_1 \geq \mu_2 \geq \nu_2 \geq \dots \geq \nu_n \geq \mu_{n+1}. \]

Since they are all pairwise different, this concludes the proof for $(\eta)$. The SMF property for $(\alpha)$ now follows from Proposition~\ref{prop:subgraph}.
\end{proof}

\begin{lemma}
Representation $(\kappa)$ of Theorem~\ref{thm:SMF_red} are SMF. 
\end{lemma}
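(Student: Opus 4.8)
The plan is to reduce the claim to a multiplicity statement about each bigraded piece and then decompose the two summands separately. Writing the representation as $V = S^2\CC^2 \oplus (\CC^2\otimes\CC^n)$, where $\SL_2$ acts on both summands and $\SL_n$ only on the second, saturatedness together with Remark~\ref{rem:saturated} tells us it suffices to show that for all $0\leq r_1\leq 3$ and $0\leq r_2\leq 2n$ the space
\[ \bigwedge^{r_1}(S^2\CC^2)\otimes\bigwedge^{r_2}(\CC^2\otimes\CC^n) \]
is multiplicity-free as an $\SL_2\times\SL_n$ module.

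First I would record the decomposition of the first factor. As an $\SL_2$ module $S^2\CC^2$ is the irreducible $V(2)$, and since $\bigwedge^3 V(2)$ is trivial and $\bigwedge^2 V(2)\cong V(2)^*\cong V(2)$, the exterior powers are $\bigwedge^0=\bigwedge^3=V(0)$ and $\bigwedge^1=\bigwedge^2=V(2)$. In particular each $\bigwedge^{r_1}(S^2\CC^2)$ is an irreducible $\SL_2$ module of the form $V(0)$ or $V(2)$, on which $\SL_n$ acts trivially.

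Next I would decompose the second factor by $(\GL_2,\GL_n)$ skew duality (Proposition~\ref{prop:glm_gln}):
\[ \bigwedge^{r_2}(\CC^2\otimes\CC^n)=\bigoplus_{|\lambda|=r_2,\ \ell(\lambda)\leq 2,\ \lambda_1\leq n} S_\lambda(\CC^2)\otimes S_{\lambda^t}(\CC^n), \]
where $S_\lambda(\CC^2)$ restricts to the $\SL_2$ irreducible $V(\lambda_1-\lambda_2)$ and $S_{\lambda^t}(\CC^n)$ is an irreducible $\SL_n$ module. The key observation is that within a fixed degree the $\SL_n$ types are pairwise distinct: two partitions $\lambda^t$ and $\mu^t$ of the same size $r_2$ yield isomorphic $\SL_n$ modules only if they differ by a full column of height $n$, which alters the size by $n$; hence distinct $\lambda$ give distinct $\SL_n$ types for every $n\geq 2$.

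Finally I would combine the two decompositions. Tensoring with $\bigwedge^{r_1}(S^2\CC^2)$, which is a pure $\SL_2$ module, leaves each $\SL_n$ type $S_{\lambda^t}(\CC^n)$ untouched, so the multiplicity of an irreducible constituent $V(a)\otimes W$ receives a contribution from at most one index $\lambda$. That contribution equals the multiplicity of $V(a)$ in $\bigwedge^{r_1}(S^2\CC^2)\otimes V(\lambda_1-\lambda_2)$, which is at most one: it is either $V(\lambda_1-\lambda_2)$ itself when $\bigwedge^{r_1}(S^2\CC^2)=V(0)$, or $V(2)\otimes V(d)=V(d+2)\oplus V(d)\oplus V(d-2)$ when $\bigwedge^{r_1}(S^2\CC^2)=V(2)$, both multiplicity-free by the Clebsch--Gordan rule. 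This establishes multiplicity-freeness of every bigraded component and hence the SMF property of $(\kappa)$ for all $n\geq 2$. The only point requiring care is the distinctness of the $\SL_n$ types after restriction from $\GL_n$, i.e. the full-column argument above; the remaining steps are routine.
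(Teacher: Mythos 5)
Your proof is correct and follows essentially the same route as the paper: decompose $\bigwedge^{r_2}(\CC^2\otimes\CC^n)$ by $(\GL_2,\GL_n)$ skew duality, note that $\bigwedge^{r_1}(S^2\CC^2)$ is always $V(0)$ or $V(2)$, and conclude by Clebsch--Gordan together with the distinctness of the $\SL_n$ types. The only difference is that you spell out the full-column argument for why distinct $\lambda$ of the same size give distinct $\SL_n$ irreducibles, a point the paper leaves implicit.
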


\begin{proof} Let $W = S^2\SL_2$. Note that $\bigwedge^i W$ is either isomorphic to $S^2\SL_2$ or to the trivial representation. This ensures all tensor products $\bigwedge^i W \otimes V_{\mu}$ to be multiplicity-free. In particular, all the subspaces

\[ \bigwedge^i(W) \otimes \bigwedge^j(\SL_2\otimes\SL_n) \simeq \bigoplus_{|\mu|=j,~\ell(\mu)\leq 2}\left(\bigwedge^i W \otimes V^{(2)}_{\mu}\right) \otimes V^{(n)}_{\mu^t} \] 
are multiplicity-free and hence the claim is proved.
\end{proof}

For the modules with a factor $X_n$, in all cases the proofs rely on the statement of Lemma~\ref{lemm:Olmost_dual} and~\ref{lemm:SOlmost_dual} that $\SL_2\otimes X_n$ is an almost dual pair on the various $\bigwedge^k\CC^2\otimes\CC^n$.

\begin{lemma}
$X_n \otimes\SL_2 \oplus \SL_2\otimes X_m$ (possibly with $X_n = \SL_n$ and/or $X_m = \SL_m$) is SMF. 
\end{lemma}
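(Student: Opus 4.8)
The plan is to reduce to the bigraded pieces and then exploit that each of the two summands is governed by an (almost) dual pair with the shared $\SL_2$, so that the two outer factors $X_n$ and $X_m$ become ``decoupled'' and only an $\SL_2$ Clebsch--Gordan product survives. Write $V = V_1 \oplus V_2$ with $V_1 = \CC^n \otimes \CC^2$ and $V_2 = \CC^2 \otimes \CC^m$, the middle $\SL_2$ being the one shared by both summands, and the semisimple part of $G$ being $X_n \times \SL_2 \times X_m$. By Remark~\ref{rem:saturated} it suffices to show that every bigraded piece $\bigwedge^{r_1} V_1 \otimes \bigwedge^{r_2} V_2$ is multiplicity-free as an $X_n \times \SL_2 \times X_m$ module.

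First I would record the $X_n \times \SL_2$ decomposition of $\bigwedge^{r_1} V_1$. In every admissible case it has the shape
\[ \bigwedge^{r_1} V_1 = \bigoplus_\lambda A_\lambda \otimes W_\lambda, \]
where $A_\lambda$ is an irreducible $X_n$ module, $W_\lambda$ an irreducible $\SL_2$ module, and, crucially, the $A_\lambda$ are pairwise non-isomorphic for distinct $\lambda$. When $X_n = \SL_n$ this is exactly $(\GL_n,\GL_2)$ skew duality (Proposition~\ref{prop:glm_gln}), a genuine dual pair, so the $A_\lambda = V_\lambda^{(n)}$ are distinct and each $W_\lambda = {\rm Res}^{\GL_2}_{\SL_2} V_{\lambda^t}^{(2)}$ is irreducible; when $X_n = \OG_n$ (or $\SO_n$ with $n$ odd) this is precisely the almost dual pair statement of Lemma~\ref{lemm:Olmost_dual} (resp. Lemma~\ref{lemm:SOlmost_dual}), the relevant $\SL_2$ being the degree-preserving $\mathfrak{sl}_2^{(2)}$. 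The same applies to $\bigwedge^{r_2} V_2 = \bigoplus_\mu U_\mu \otimes B_\mu$, with $B_\mu$ pairwise non-isomorphic irreducibles of $X_m$ and $U_\mu$ irreducible for $\SL_2$.

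Then I would tensor the two decompositions, letting $\SL_2$ act diagonally on the shared $\CC^2$:
\[ \bigwedge^{r_1} V_1 \otimes \bigwedge^{r_2} V_2 = \bigoplus_{\lambda,\mu} A_\lambda \otimes (W_\lambda \otimes U_\mu) \otimes B_\mu. \]
Fix an irreducible $P \otimes Q \otimes R$ of $X_n \times \SL_2 \times X_m$. Its multiplicity is the sum over $(\lambda,\mu)$ of the product of the multiplicity of $P$ in $A_\lambda$, the multiplicity of $Q$ in $W_\lambda \otimes U_\mu$, and the multiplicity of $R$ in $B_\mu$. Distinctness of the $A_\lambda$ forces at most one $\lambda$ with $P \cong A_\lambda$, and distinctness of the $B_\mu$ forces at most one $\mu$ with $R \cong B_\mu$, so at most one pair $(\lambda,\mu)$ contributes. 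For that pair the surviving factor is the multiplicity of $Q$ in the $\SL_2$-tensor product $W_\lambda \otimes U_\mu$ of two irreducibles, which is at most $1$ by Clebsch--Gordan. Hence every bigraded piece is multiplicity-free and $V$ is SMF.

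The main obstacle is really the uniform packaging of the ``distinctness'' input rather than any hard computation: one must be sure that in each admissible case the $X_n$- and $X_m$-isotypic labels remain pairwise distinct. This is automatic for $\SL$ factors and for $\OG_n$/odd $\SO_n$ via the quoted lemmas, and it is exactly why part a) restricts the orthogonal factors to odd $\SO_n$ while part b) uses $\OG_n$: for even $\SO_{2k}$ the constituent $V_{[\lambda]}$ can split as $V_{[\lambda]}^+ \oplus V_{[\lambda]}^-$ and the almost-dual-pair property fails, so that this uniform decoupling argument would break down there.
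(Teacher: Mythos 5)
Your argument is correct and is essentially the paper's own proof: both reduce to the bigraded pieces, invoke the almost dual pair property of $\SL_2\otimes X_n$ (Lemmas~\ref{lemm:Olmost_dual} and~\ref{lemm:SOlmost_dual}, with $(\GL_n,\GL_2)$ skew duality covering the $\SL_n$ case) to get pairwise distinct $X_n$- and $X_m$-isotypic labels, and then observe that the only remaining source of multiplicity is the Clebsch--Gordan product of two irreducible $\SL_2$ modules, which is multiplicity-free. Your version merely spells out the bookkeeping (and the even-$\SO_{2k}$ caveat) more explicitly than the paper does.
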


\begin{proof} Since $\SL_2\otimes X_n$ is an almost dual pair (even for $X_n = \SL_n$), it follows that for all the subspaces

\[ \bigwedge^{(k,l)}(\CC^n\otimes\CC^2 \oplus \CC^2\otimes\CC^m) = \bigoplus_{\lambda,\mu} V^{(n)}_{[\lambda]} \otimes V(\tilde{\lambda})\otimes V(\tilde{\mu}) \otimes V^{(m)}_{[\mu]}, \]
we have $V^{(n)}_{[\lambda]} \neq V^{(n)}_{[\lambda']}$ for $\lambda \neq \lambda'$. The claim is now immediate, since the tensor product $V(\tilde{\lambda})\otimes V(\tilde{\mu})$ of irreducible $\SL_2$ representations is multiplicity-free. 
\end{proof}

\begin{lemma}
Representations $(\sigma)$ and $(\vartheta)$ of Theorem~\ref{thm:SMF_red} are SMF. 
\end{lemma}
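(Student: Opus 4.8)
The plan is to treat both diagrams uniformly by exploiting the almost dual pair property of $\SL_2\otimes X_n$ on the exterior powers of $\CC^2\otimes\CC^n$ (Lemmas~\ref{lemm:Olmost_dual} and~\ref{lemm:SOlmost_dual}), exactly along the lines of the proof for $(\kappa)$. Both representations carry the joint summand $\CC^2\otimes\CC^n$ under $\SL_2\times X_n$ and differ only in the extra $\SL_2$-summand $W$, which is $W=\CC^2$ for $(\vartheta)$ and $W=S^2\CC^2$ for $(\sigma)$. Since the action is saturated, by Remark~\ref{rem:saturated} it suffices to show that every bigraded piece
\[ \bigwedge^{(k,l)} = \bigwedge^k W \otimes \bigwedge^l(\CC^2\otimes\CC^n) \]
is multiplicity-free as an $\SL_2\times X_n$ module.

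First I would decompose the second factor. By~\eqref{eqn:On_so4-skew} and the discussion following it, within a fixed exterior degree $l$ one has
\[ \bigwedge^l(\CC^2\otimes\CC^n) = \bigoplus_\lambda V_{[\lambda]}\otimes V(\lambda^t_1-\lambda^t_2), \]
where each $\lambda$ contributes a single irreducible $\SL_2$-factor and, crucially, the $X_n$-types $V_{[\lambda]}$ for distinct $\lambda$ are pairwise nonisomorphic by the almost dual pair property. Tensoring with $\bigwedge^k W$ alters only the $\SL_2$-side and leaves the $X_n$-types untouched, so two isomorphic $X_n\times\SL_2$-constituents of $\bigwedge^{(k,l)}$ can only arise from a single $\lambda$. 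Thus multiplicity-freeness of $\bigwedge^{(k,l)}$ reduces to multiplicity-freeness of the $\SL_2$-module $\bigwedge^k W\otimes V(\lambda^t_1-\lambda^t_2)$ for each individual $\lambda$.

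The next step is to compute the exterior powers of $W$ and observe that they stay irreducible. For $W=\CC^2$ one has $\bigwedge^0 W\cong\bigwedge^2 W\cong V(0)$ and $\bigwedge^1 W\cong V(1)$. For $W=S^2\CC^2\cong V(2)$ one has $\bigwedge^0 W\cong\bigwedge^3 W\cong V(0)$, $\bigwedge^1 W\cong V(2)$, and, by a direct weight count, $\bigwedge^2 V(2)\cong V(2)$. Hence $\bigwedge^k W$ is always an irreducible $\SL_2$-module of the form $V(0)$, $V(1)$ or $V(2)$. It then remains to invoke the Clebsch--Gordan rule: $V(1)\otimes V(d)=V(d+1)\oplus V(d-1)$ and $V(2)\otimes V(d)=V(d+2)\oplus V(d)\oplus V(d-2)$, with the obvious truncation for small $d$, both of which are multiplicity-free, as is trivially $V(0)\otimes V(d)$. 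This completes the reduction and shows that $(\vartheta)$ and $(\sigma)$ are SMF; the argument applies verbatim to $X_n=\SO_n$ with $n$ odd (part a) and to $X_n=\OG_n$ arbitrary (part b), since the almost dual pair property is available in both cases.

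I expect the only genuine point to verify carefully is that the $X_n$-types $V_{[\lambda]}$ occurring in a fixed $\bigwedge^l(\CC^2\otimes\CC^n)$ are really distinct, since this is exactly what prevents cross-terms between different $\lambda$ from producing multiplicities; this is precisely the content of the almost dual pair lemmas. The remaining input is the small computation $\bigwedge^2 V(2)\cong V(2)$, which keeps all exterior powers of the standalone $S^2\CC^2$-summand irreducible, so that no multiplicity can be introduced on the $\SL_2$-side beyond the harmless Clebsch--Gordan splittings. Everything else is elementary bookkeeping.
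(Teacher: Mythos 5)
Your proposal is correct and follows essentially the same route as the paper: the paper's proof also combines the almost dual pair property of $\SL_2\otimes X_n$ with the observation that all exterior powers $\bigwedge^k W$ are irreducible, so that multiplicity-freeness of each bigraded piece reduces to the Clebsch--Gordan rule for $\SL_2$. You merely spell out details the paper leaves implicit, such as the computation $\bigwedge^2 V(2)\cong V(2)$.
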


\begin{proof} We can argue as in the preceding lemmas, using that all exterior powers $\bigwedge^k W$ are irreducible. Hence,

\[ \bigwedge^{(k,l)}(W \oplus \CC^2\otimes\CC^n) = \bigoplus_{\lambda,\mu} V(\tilde{\lambda})\otimes V(\tilde{\mu}) \otimes V^{(m)}_{[\mu]} \]
is multiplicity-free for all $k$ and $l$. \end{proof}

\begin{lemma} 
Representation $(\rho)$ of Theorem~\ref{thm:SMF_red} is SMF. 
\end{lemma}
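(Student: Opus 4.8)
The plan is to deduce $(\rho)$ from the $(\OG_m,\mathfrak{so}_4)$ skew duality that already underlies Lemmas~\ref{lemm:Olmost_dual} and~\ref{lemm:SOlmost_dual}. Writing the underlying space as $\CC^m\otimes\CC^2$, the saturated group is $(\CC^*)^2\times X_m$, so by Remark~\ref{rem:saturated} it suffices to show that each bigraded piece $\bigwedge^{r_1}\CC^m\otimes\bigwedge^{r_2}\CC^m$ is multiplicity-free as an $X_m$ module. I would start from the decomposition~\eqref{eqn:On_so4-skew}, which presents $\bigwedge\CC^m\otimes\CC^2=\bigoplus_\lambda V_{[\lambda]}\otimes V(\tilde\lambda)$ as a module for $X_m\times\mathfrak{so}_4$ with $\mathfrak{so}_4\simeq\mathfrak{sl}^{(1)}_2\times\mathfrak{sl}^{(2)}_2$.

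The key point is that the two torus factors of $(\CC^*)^2$ act through the number operators on the two copies of $\CC^m$, i.e. through $d_{1,1}$ and $d_{2,2}$, and these span exactly the Cartan subalgebra of $\mathfrak{so}_4$ (recall $-d_{1,1}-d_{2,2}$ and $d_{1,1}-d_{2,2}$ are the Cartan generators of $\mathfrak{sl}^{(1)}_2$ and $\mathfrak{sl}^{(2)}_2$). Hence fixing the bidegree $(r_1,r_2)$ is the same as fixing the $\mathfrak{so}_4$-weight $(n-r_1-r_2,\,r_1-r_2)$, and the multiplicity of $V_{[\lambda]}$ in $\bigwedge^{r_1}\CC^m\otimes\bigwedge^{r_2}\CC^m$ equals the dimension of that weight space in $V(\tilde\lambda)$. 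Since $V(\tilde\lambda)$ is an irreducible $\mathfrak{sl}_2\times\mathfrak{sl}_2$ module, hence an outer tensor product of two $\mathfrak{sl}_2$-irreducibles, all of its weight spaces are at most one-dimensional; thus every $V_{[\lambda]}$ occurs with multiplicity at most one in each bigraded piece.

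It remains to check that distinct partitions never contribute isomorphic $X_m$ modules to the same piece. As the bidegree lies inside the fixed total degree $k=r_1+r_2$, this is precisely the almost dual pair property: Lemma~\ref{lemm:Olmost_dual} gives $V_{[\lambda]}\not\simeq V_{[\mu]}$ for $\OG_m$ (settling part b) with $X_m=\OG_m$ and arbitrary $m$), and Lemma~\ref{lemm:SOlmost_dual} gives the same for $\SO_m$ with $m$ odd, where the only dangerous coincidence $[\mu]=[\lambda]^\sharp$ is already excluded within a fixed total degree by the parity bookkeeping inherited from the proof of Lemma~\ref{lemm:SL3_SO2n+1}. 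Combining the two inputs shows each $\bigwedge^{r_1}\CC^m\otimes\bigwedge^{r_2}\CC^m$ is multiplicity-free, so $(\rho)$ is SMF. The main obstacle is conceptual rather than computational: the ambient module $\bigwedge^{k}(\CC^m\otimes\CC^2)$ is \emph{not} multiplicity-free over $X_m$ alone, since the $\SL_2$-multiplicity spaces $V(\tilde\lambda)$ have dimension greater than one, so one cannot simply restrict a known multiplicity-free module; the real content is recognizing the bidegree as an $\mathfrak{so}_4$-weight, which is what forces each multiplicity back down to at most one.
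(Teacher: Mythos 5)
Your proposal is correct and takes essentially the same route as the paper: both pass to $\bigwedge(\CC^m\oplus\CC^m)\simeq\bigwedge\CC^m\otimes\CC^2$, apply the $(\OG_m,\mathfrak{so}_4)$ skew duality decomposition~\eqref{eqn:On_so4-skew}, use the almost-dual-pair Lemmas~\ref{lemm:Olmost_dual} and~\ref{lemm:SOlmost_dual} to separate distinct partitions within a fixed total degree, and observe that the several copies of a fixed $V_{[\lambda]}$ land in distinct bigraded pieces. Your identification of the bidegree with an $\mathfrak{so}_4$-weight, together with the one-dimensionality of weight spaces of irreducible $\mathfrak{sl}_2\times\mathfrak{sl}_2$ modules, merely makes explicit what the paper phrases via the root operators of $\mathfrak{sl}^{(1)}_2$ and $\mathfrak{sl}^{(2)}_2$ shifting bidegrees.
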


\begin{proof} Let $V = \CC^n \oplus \CC^n$. Consider the isomorphism $\bigwedge \CC^n\otimes\CC^2 \simeq \bigwedge (\CC^n \oplus \CC^n) \simeq \bigoplus_{k,l} \bigwedge^{(k,l)} V$ of $\OG_n$ modules. Recall the description~\eqref{eqn:On_so4-skew} of the $\OG_n\times\mathfrak{sl}^{(1)}_2\times\mathfrak{sl}^{(2)}_2$ module structure. Here, the root operators of $\mathfrak{sl}^{(1)}_2$ act as

\[  \bigwedge^{(k,l)} V \longrightarrow \bigwedge^{(k+\epsilon,l+\epsilon)} V \]
with $\epsilon = \pm 1$, whereas those of $\mathfrak{sl}^{(2)}_2$ act as

\[ \bigwedge^{(k,l)} V \longrightarrow \bigwedge^{(k+\epsilon,l-\epsilon)} V. \] 

Recall that $\SL_2\times X_n$ is an almost dual pair on $\bigwedge^p \CC^2\otimes\CC^n$. If we restrict to $X_n$, we get multiple copies of $V_{[\lambda]}$ in $\bigwedge^{|\lambda| + 2i} V$ for each $\omega_\lambda$, but they are contained in different $\bigwedge^{(k,l)} V$ with $k + l = |\lambda| + 2i$. Since we assume saturatedness, this representation is SMF.
\end{proof}

\section{Non-saturated Modules} \label{sec:non_saturated} 

The question of determining the SMF property of an arbitrary (not necessarily saturated) representation is quite difficult which comes, amongst others, from the absence of a simple geometric or algebraic characterization of skew multiplicity-free actions. 

If we take a finite set of saturated indecomposable representations $(G_i,V_i)$ from Theorems~\ref{thm:howeslist}, \ref{thm:SMF_irr} and~\ref{thm:SMF_red}, the product $G:= G_1 \times \dots \times G_n$, $V:=V_1 \oplus \dots \oplus V_n$ yields a saturated SMF module. If we choose a subgroup of the torus factors of $G$, it may or may not happen that $\bigwedge V$ stays multiplicity-free under restriction.

This description is quite unsatisfactory, because the last instruction may involve the complete computation of decomposition of exterior powers $\bigwedge^k V$. In the symmetric setting there is a more direct proposition (which can be found in~\cite{Lea}) concerning the character group $X=X(T)$ of the torus factors $T$ of $G$:

Subgroups $U$ of $T$ correspond to a quotient of $X$ by $X_U=\{ \chi \in X:~ \chi|_U=1 \}$. Let $S$ denote be the group generated by the highest weights occurring in $S(V)$, intersected by $X$. Under restriction to $U$, $S(V)$ stays multiplicity-free if and only if the quotient map $S \rightarrow X/X_U$ is injective.

The proof uses the fact that for a multiplicity-free module $S(V)$ the elements therein invariant under the unipotent radical of $G$ form a polynomial algebra. A similar statement for the skew symmetric setting is not known yet.

\begin{example}
Consider the action of $(\CC^*)^2\times \SL_3$ on $V = S^2\CC^3 \oplus \CC^3$ (which is labeled by $(\gamma)$ in Theorem~\ref{thm:SMF_red}). One can check that all subspaces $\bigwedge^r V$ are multiplicity-free for $\SL_3$. Hence if we take the pullback coming from the diagonal embedding of $\CC^*$ in $(\CC^*)^2$, the resulting action of $\CC^*\times\SL_3$ on $V$ is still SMF. But if we take instead the embedding 
\[ z \mapsto (z^2,z), \] 
we see that the torus acts by $z^6$ on the submodules $\bigwedge^3\CC^3$ and $\bigwedge^6 S^2\CC^3$ of $\bigwedge V$ which are both (as representations of the simple factor $\SL_3$) isomorphic to the trivial representation. Thus, in this case the action is no longer SMF. 
\end{example}

Similarly, we can see that the assumption in Lemma~\ref{lemm:dual} of a saturated operation is essential. If we take the action of $\CC^*\times\SL_3$ via the diagonal embedding but now acting on $V' = S^2\CC^3 \oplus \CC^{3*}$, the standard representation $\CC^3$ has multiplicity two inside $\bigwedge^2 V'$. Since $\CC^*$ acts by $z^2$ on the whole subspace, we conclude that $(\CC^*\otimes\SL_3, V)$ is SMF while $(\CC^* \otimes \SL_3, V')$ is not. \\

One can detect similar phenomena for the other modules in Theorem~\ref{thm:SMF_red}. For instance, $(\alpha)$ stays SMF when restricted to $\CC^*\times \SL_n$ (diagonal embedding). But here, we can also exchange $\CC^n \oplus \CC^n$ by $\CC^n \oplus \CC^{n*}$ without violating the SMF condition. \\

At least it is satisfactory that there is no other possibility of constructing SMF modules than the one described above.

\begin{lemma}
Every skew multiplicity-free representation $(G,V)$ is obtained by restrictions from a sum of indecomposable saturated SMF representations.
\end{lemma}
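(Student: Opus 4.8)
The plan is to reverse the saturation process carefully. Given an arbitrary SMF representation $(G,V)$, first decompose $V = V_1 \oplus \dots \oplus V_l$ into $G$-irreducibles and write $G = Z \cdot [G,G]$, where $Z$ is the identity component of the center (a torus $T$) together with any finite central part. I would first reduce to the connected case: by the second \emph{Remark} following Definition~\ref{def:SMF}, replacing $G$ by a connected cover $G'$ preserves the SMF property and the induced subalgebra of $\Aut(\bigwedge V)$, so we may assume $G$ is connected and hence $G = T \times [G,G]$ with $[G,G]$ a product of simple factors $G_1 \times \dots \times G_k$.

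Next I would build the saturation. For each irreducible summand $V_j$ on which some simple factor acts nontrivially, the semisimple part $[G,G]$ acts as some tensor product $W_j^1 \otimes \dots \otimes W_j^k$ of representations of the simple factors, each $W_j^i$ being either trivial or irreducible. The saturated cover is $\widehat{G} := (\CC^*)^l \times [G,G]$, acting on the same $V$, where the $j$th torus factor scales $V_j$ and fixes the other summands. I claim $(\widehat{G}, V)$ is SMF: this is the assertion of Remark~\ref{rem:saturated}, since SMF-ness of a saturated action is precisely multiplicity-freeness of each $\bigwedge^{(r_1,\dots,r_l)}V$ as a $[G,G]$-module, and the SMF-ness of the smaller group $(G,V)$ forces this. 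The point is that the original torus $T$ embeds into $(\CC^*)^l$ via the diagonal determined by the weights of $T$ on each $V_j$, and multiplicity-freeness of $\bigwedge V$ under the smaller group $G = T \times [G,G]$ implies, degree-block by degree-block, multiplicity-freeness of each $\bigwedge^{(r_1,\dots,r_l)}V$ for $[G,G]$ alone—because two isomorphic $[G,G]$-constituents sitting in different multidegrees would collide in $\bigwedge V$ under $G$ as soon as $T$ assigns them the same character, but in fact one only needs them to be distinguishable, and if they were \emph{not} distinguishable under $T$ they would already violate SMF-ness of $(G,V)$. Thus $(\widehat{G},V)$ is saturated and SMF.

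The final step is to recognize $(\widehat{G},V)$ as a \emph{sum of indecomposable} saturated SMF representations and to exhibit $(G,V)$ as a restriction of it. Decomposing the representation diagram $\mathcal{G}$ of $(\widehat{G},V)$ into its connected components $\mathcal{G}^{(1)}, \dots, \mathcal{G}^{(s)}$ (as in the \emph{Remark} on representation diagrams), each component is a connected diagram, hence an indecomposable representation; by Proposition~\ref{prop:subgraph} each such component is itself SMF. The product of these indecomposable saturated SMF pieces reconstitutes $(\widehat{G},V)$, and the original $(G,V)$ is obtained by restricting the full torus $(\CC^*)^l$ to the subtorus $T$ (and passing to the quotient by the finite central kernel), which is exactly a restriction in the sense demanded. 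This completes the argument.

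The main obstacle I expect is the second paragraph: proving that enlarging $T$ to the full saturating torus $(\CC^*)^l$ genuinely preserves SMF-ness rather than potentially destroying it. The subtle direction is that adding torus factors can only \emph{refine} the multigrading and hence can only \emph{separate} constituents that were previously forced to share a weight—so a multiplicity that appears for the larger saturated group must already appear for the smaller group in a single multidegree block, contradicting the hypothesis. Making this monotonicity precise (that coarsening the torus is what risks creating multiplicities, while refining it cannot) is the crux, and it is essentially the observation already used informally in Section~\ref{sec:non_saturated}: restriction to a subtorus corresponds to a quotient of the character lattice, and multiplicities can only be created, never removed, by such a quotient.
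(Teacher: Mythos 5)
Your proposal is correct and follows essentially the same route as the paper's (very terse) proof: saturate by adjoining a full torus $(\CC^*)^l$, note that enlarging the acting group cannot create multiplicities—since the original central torus acts by a single character on each multidegree block, a repeated $[G,G]$-constituent in one block would already violate SMF-ness of $(G,V)$—then split the saturated representation into its indecomposable (connected-diagram) pieces and recover $(G,V)$ by restricting the big torus to the image of the original one. The paper compresses all of this into two sentences; your write-up just makes the implicit monotonicity step explicit.
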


\begin{proof} By adding torus factors, we can assume that $(G,V)$ comes from a saturated representation $(\tilde{G},V)$. By definition, this is a sum of indecomposable representations $(G_i,V_i)$, which can be easily recognized as saturated ones.
\end{proof}

\appendix

\section{Exhaustion of irreducible SMF modules} \label{app:irred}

We now complete the proof of Theorem \ref{thm:SMF_irr}. Therefore, we show that all products of representations in Theorem \ref{thm:howeslist} which have not been verified yet, fail to be SMF. Let us start with an observartion on $(\GL_n,\GL_m)$ skew duality. If we have irreducible representations $G \rightarrow \GL(V)$ and $H \rightarrow \GL(W)$ of simple groups, we know that the exterior product $\bigwedge^k (V \otimes W)$ decomposes as $\GL(V)\times\GL(W)$ module by

\begin{equation} \label{eqn:plethysm-duality}
\bigoplus_{|\lambda|=k,~ \ell(\lambda)\leq \dim V,~ \lambda_1 \leq\dim W} S_{\lambda}(V) \otimes S_{\lambda^t}(W). 
\end{equation}
Hence, if the plethysms $S_{\lambda}(V)$ or $S_{\lambda^t}(W)$ contain multiplicities for any $\lambda$ in this direct sum, $V\otimes W$ is not a SMF module for $G\times H$. In Table~\ref{tab:NonMFplethysms} we give examples of such $V$ and $\lambda$. The labels refer to Theorem~\ref{thm:howeslist}, $\lambda$ is the partition of the corresponding plethysm $S_\lambda(V)$ and under ``Representation'' we give the multiplicity of an irreducible submodule and its highest weight with respect to the basis of fundamental weights. Note that in all those cases, $\ell(\lambda) = 2$, i.e. the term corresponding to that partition always occurs whenever $\dim V \geq 2$. But this condition is always satisfied and hence, none of these $W$ gives rise to a SMF representation $V\otimes W$. \bigskip

\begin{table}	
\[ \begin{array}{| l l l l |} \hline
{\rm Label}  & V                   & \lambda   & {\rm Representation} \\ \hline
(A2^3)       & S^2\SL_3            & (2^3)     & 2[2,2]                \\ 
(A3)         & S^3\SL_2            & (2^2)     & 2[4]                  \\
(Ak)         & S^k\SL_2,~k = 4,5,6 & (2,1)     & 2[k]                  \\
(A7)         & S^3\SL_3            & (2^2)     & 2[2,2]                \\
(A8^5)       & \bigwedge^2\SL_5    & (2^3,1)   & 2[2,0,1,1]            \\
(A9)         & \bigwedge^3\SL_6    & (2^2,1)   & 3[0,1,1,1,0]          \\
(B2)         & \Delta_7            & (2^2,1^2) & 3[0,1,0]              \\
(B3)         & \Delta_9            & (2^2,1)   & 3[0,0,0,1]            \\
(C1^3)       & \Sp_6               & (2^2,1^2) & 2[0,1,0]              \\
(C3)         & \bigwedge^3_0\Sp_6  & (2^2)     & 2[0,2,0]              \\
(D2)         & \Delta^+_{10}       & (2^4)     & 2[0,0,2,0,0]          \\
(D3)         & \Delta^+_{12}       & (2^2,1)   & 2[1,0,0,0,0,1]        \\
(E6)         & E_6                 & (2^3)     & 2[1,0,0,0,0,1]        \\
(E7)         & E_7                 & (2^2,1)   & 2[1,0,0,0,0,0,1]      \\
(G)          & G_2                 & (2^2)     & 2[2,0]                \\ \hline
\end{array} \]
	\caption{Non-multiplicity-free plethysms of irreducible SMF representations}
	\label{tab:NonMFplethysms}
\end{table}

Among the representations of Theorem~\ref{thm:howeslist}, only $(A1^n)$, $(B1^n)$, $(D1^n)$  do not occur in Table~\ref{tab:NonMFplethysms}. (Note that $(A2^2) = (B1^1)$, $(A8^4) = (D1^3)$ and $(C2) = (B1^2)$ by geometrical equivalence.) 
Hence, the label of any irreducible SMF representation $V\otimes W$ must be $(X1^n)(Y1^m)$ with $X$ and $Y$ either of the letters $A$, $B$, $C$ or $D$. And if one of them equals $C$, then its exponent must be $2$. \bigskip

In order to show that this does not yield other SMF representations than those in Theorem~\ref{thm:SMF_irr}, it suffices (by Lemma~\ref{lemm:levi} and another application of the plethysm argument~\eqref{eqn:plethysm-duality}) to check the modules

\begin{equation} \label{eqn:remaining}
\SL_4\otimes\SO_3,~ \SL_3\otimes\SO_6,~ \SO_3\otimes\SO_3,~ \Sp_4\otimes\Sp_4. 
\end{equation}

(Since, for instance, $\SL_3\otimes\SO_6$ is a Levi component of $\SL_4\otimes\SO_6$. Furthermore, by~\eqref{eqn:plethysm-duality} multiplicities for $\SL_4\otimes\SO_6$ also produce multiplicities for $\Sp_4\otimes\SO_6$.) \bigskip

Note that for the modules in~\eqref{eqn:remaining}, the plethysms in~\eqref{eqn:plethysm-duality} are actually restrictions of irreducible $\GL(V)$ modules to the corresponding subgroups. They can be easily calculated by well-known branching rules from Littlewood (and application of the modification rules~\ref{lemm:mod_rules}). They state that for any partition $\lambda$,

\[ \{\lambda\} = \sum_{\mu} \left(\sum_{\delta~even}\, c^{\lambda}_{\delta,\mu} \right) [\mu] \]
for the characters of $\OG_n$, while for $\Sp_{2n}$ they decompose as

\[ \{ \lambda \} = \sum_{\mu} \left( \sum_{\beta^t~\!even}\, c^{\lambda}_{\beta,\mu} \right) \left<\mu\right>. \]

Note that these are identities of ``universal'' characters (why one might impose the modification rules). For a rigorous definition of these objects and a proof of this identity see~\cite[Theorem 2.3.1]{KT}. 

In our case, we calculate some specific restrictions that contain multiplicities. First, we see that

\begin{equation}
{\rm Res}^{\GL_3}_{\OG_3}(S_{(4,2)}(V)) = V_{[4]} \oplus V_{[3]^\sharp} \oplus 2 \cdot V_{[2]} \oplus V_{[0]}
\end{equation}
and hence, $\SL_4 \times \SO_3$ is not SMF. Similarly, for $\SL_3 \times \SO_6$ note that

\[ {\rm Res}^{\GL_6}_{\OG_6}(S_{(3,2,1^2)}(V)) = V_{[2,1]} \oplus V_{[2,1]^\sharp} \]
and that $V_{[2,1]} \simeq V_{[2,1]^\sharp}$ as representations of $\SO_6$. Finally, as a module for the group $\SL_m\times\SL_m$, the fourth exterior power of $\bigwedge^4\CC^m\otimes\CC^m$ contains the irreducible submodules $V_{(3,1)}\otimes V_{(2,1^2)}$ and $V_{(2,1^2)}\otimes V_{(3,1)}$. Hence, for $\SO_3 \times \SO_3$ (if $m=3$) and $\Sp_4\times\Sp_4$ (if $m=4$) it contains multiplicities since

\[ {\rm Res}^{\GL_3}_{\OG_3}(S_{(3,1)}(V)) = V_{[3]^\sharp} \oplus V_{[2]} \oplus V_{[1]^\sharp} {\rm~and~} {\rm Res}^{\GL_3}_{\OG_3}(S_{(2,1^2)}(V)) = V_{[2]^\sharp} \oplus V_{[1]} \]
in the first case, while in the second case

\[ {\rm Res}^{\GL_4}_{\Sp_4}(S_{(3,1)}(V)) = V_{\left<3,1\right>} + V_{\left<2\right>} {\rm~and~} {\rm Res}^{\GL_4}_{\Sp_4}(S_{(2,1^2)}(V)) = V_{\left<2\right>} \oplus V_{\left<1^2\right>}. \]

By this we know that if $G$ is a group with two simple factors, there are no further irreducible SMF representations than those in Theorem~\ref{thm:SMF_irr}. In this list, there is one instance of an SMF representation for a group with three simple factors, since $\SL_2\otimes\SO_4 \sim \SL_2\otimes\SL_2\otimes\SL_2$. We claim that this is the only case of such group, i.e. that all other groups with more than two simple factors admit no irreducible SMF representation: If there is such a space, then at least two simple factors must be isomorphic to $\SL_n$. Therefore it suffices to show that the following actions are not SMF: 
\[ \begin{array}{| l c l |} \hline
{\rm Label}                                & \bigwedge^a & {\rm Representation} \\ \hline
\SL_2\otimes \SL_2 \otimes \SL_3           & 6           & 2[2][2][1,1]          \\
\SL_2\otimes\SL_2\otimes S^2 \SL_2         & 5           & 3[1][1][2]            \\
\SL_2\otimes \SL_2\otimes \bigwedge^2\SL_4 & 3           & 2[1][1][0,1,0]        \\
\SL_2\otimes\SL_2\otimes\Sp_4              & 5           & 3[1][1][1,0]          \\ \hline
\end{array} \]

In particular, this completes the proof of Theorem \ref{thm:SMF_irr}.

\section{Exhaustion of saturated SMF modules} \label{app:reducible}

We begin with two lemmas which make a number of calculations superfluous and are thus very helpful. Let $G$ be a group with an irreducible representation $V$. Given this, one can construct a representation of $G_1 = (\CC^*)^2 \times G$ on the direct sum $V\oplus V$ with the $\CC^*$ factors acting as in~\eqref{eqn:saturated}. On the other hand, we have a representation of $G_2 = \CC^* \times \SL_2 \times G$ on $\CC^2\otimes V$. The embedding of $G_1$ into $G_2$ with $\CC^*$ being identified as the maximal torus of $\SL_2$ is compatible with these actions. Hence, multiplicities in $\bigwedge^k \CC^2\otimes V$ with respect to $G_2$ would lead to multiplicities in $\bigwedge^{(p,q)} (V\oplus V)$ with respect to $G_1$ for some $p + q = k$. 

\begin{lemma} \label{lemm:B1}
If $(G_1,V\oplus V)$ is SMF, then $(G_2,\CC^2\otimes V)$ is also SMF. \hfill $\Box$
\end{lemma}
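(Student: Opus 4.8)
The plan is to make the relationship between $G_1 = (\CC^*)^2 \times G$ acting on $V \oplus V$ and $G_2 = \CC^* \times \SL_2 \times G$ acting on $\CC^2 \otimes V$ completely explicit, and then argue by contraposition. First I would fix the standard maximal torus $T \subseteq \SL_2$ with character $\mathrm{diag}(z, z^{-1})$, so that the two $\CC^*$ factors of $G_1$ are identified with $T$ together with the central scaling $\CC^*$ of $\SL_2$; under this identification the restriction of the $G_2$-module $\CC^2 \otimes V$ to $G_1$ is exactly $V \oplus V$, where the two copies correspond to the two weight lines $e_1 \otimes V$ and $e_2 \otimes V$ of $\CC^2$. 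Consequently the full exterior algebra $\bigwedge(\CC^2 \otimes V)$ restricts, as a $G_1$-module, to $\bigwedge(V \oplus V) = \bigoplus_{p,q} \bigwedge^{(p,q)}(V \oplus V)$, and the single grading by exterior degree $k$ on the left refines on the right into the bigrading with $p + q = k$.

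The key step is then the observation already set up in the paragraph preceding the lemma: since the torus $T$ of $\SL_2$ records the difference $p - q$ of the two exterior degrees, any $\SL_2$-isotypic component of a fixed exterior power $\bigwedge^k(\CC^2 \otimes V)$ is carried, upon restriction to $G_1$, into the various bidegree pieces $\bigwedge^{(p,q)}(V \oplus V)$ with $p + q = k$. I would argue by contraposition: suppose $(G_2, \CC^2 \otimes V)$ is \emph{not} SMF, so that some irreducible $G_2$-module $\Gamma$ occurs with multiplicity at least two in a single $\bigwedge^k(\CC^2 \otimes V)$. Restricting $\Gamma$ to $G_1$ and extracting a highest-weight line for the semisimple part $G$ together with a specific $T$-weight (equivalently, a specific value of $p - q$), one obtains an irreducible $G_1$-module appearing with multiplicity at least two in the corresponding fixed bidegree piece $\bigwedge^{(p,q)}(V \oplus V)$ — contradicting the SMF property of $(G_1, V \oplus V)$.

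The one point requiring care is ensuring that the doubled multiplicity really survives the passage from the single $\SL_2$-grading to the refined bigrading, rather than being smeared across several distinct bidegrees $(p,q)$. Here I would use that within a \emph{fixed} irreducible $\SL_2$-representation each $T$-weight space is one-dimensional, so picking out one weight of $\Gamma|_{\SL_2 \times G}$ selects a single bidegree $(p,q)$ and faithfully transports the multiplicity of the $G$-type there. This is exactly the content of Remark~\ref{rem:saturated}, which lets me test the SMF property of the saturated $G_1$-action bidegree-by-bidegree, so that a multiplicity in one $\bigwedge^{(p,q)}(V \oplus V)$ genuinely violates it. The main obstacle, and the step I would want to write out most carefully, is this bookkeeping of weights, i.e. verifying that the $T$-weight of the chosen vector pins down $p - q$ and hence (together with $p + q = k$) the bidegree $(p,q)$ unambiguously; everything else is the formal contrapositive.
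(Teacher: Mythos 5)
Your proposal is correct and takes essentially the same approach as the paper: the paper's (deliberately terse) proof is exactly the embedding of $G_1$ into $G_2$ identifying one $\CC^*$ factor with the maximal torus of $\SL_2$, under which $\CC^2\otimes V$ restricts to $V\oplus V$, so that a repeated $G_2$-type in $\bigwedge^k(\CC^2\otimes V)$ forces a repeated $G_1$-type in some $\bigwedge^{(p,q)}(V\oplus V)$ with $p+q=k$, contradicting Remark~\ref{rem:saturated}. Your extra bookkeeping step — that one-dimensionality of the $T$-weight spaces of an irreducible $\SL_2$-module pins the transported multiplicity into a single bidegree $(p,q)$ — is precisely the detail the paper leaves implicit.
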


\begin{lemma} \label{lemm:B2}
Let $V$ be a representation of $G$ such that $\bigwedge^k V$ is reducible for some integer $k$. Then, $(\CC^* \times \CC^* \times G, V\oplus V)$ is not SMF.
\end{lemma}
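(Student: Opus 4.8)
The plan is to exploit the two-torus bigrading on $\bigwedge(V\oplus V)$. Writing the two copies of $V$ as $V_1$ and $V_2$, the first $\CC^*$ acts by a nontrivial scalar on $V_1$ and trivially on $V_2$, and symmetrically for the second $\CC^*$. Consequently the bihomogeneous pieces $\bigwedge^{(p,q)}(V\oplus V) = \bigwedge^p V \otimes \bigwedge^q V$ sit in pairwise distinct $\CC^*\times\CC^*$-weight spaces, so by Remark~\ref{rem:saturated} the action is SMF if and only if every $\bigwedge^p V \otimes \bigwedge^q V$ is multiplicity-free as a $G$-module. I would therefore reduce the statement to exhibiting a single bihomogeneous piece that fails to be multiplicity-free.

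Since $\bigwedge^k V$ is reducible and $G$ is reductive, I would decompose $\bigwedge^k V = A \oplus W$ as a $G$-module, with $A$ irreducible and $W\neq 0$. The piece to examine is the diagonal bidegree $\bigwedge^{(k,k)}(V\oplus V) = \bigwedge^k V \otimes \bigwedge^k V$. Expanding $(A\oplus W)\otimes(A\oplus W)$ yields the internal direct sum $A\otimes A \,\oplus\, A\otimes W \,\oplus\, W\otimes A \,\oplus\, W\otimes W$, in which $A\otimes W$ and $W\otimes A$ are two distinct nonzero summands. The flip map gives a $G$-module isomorphism $A\otimes W \simeq W\otimes A$, so these two summands share exactly the same $G$-constituents. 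Hence any irreducible $\Gamma$ occurring in $A\otimes W$ occurs with at least twice its multiplicity in $\bigwedge^k V \otimes \bigwedge^k V$, and in particular this tensor product is not multiplicity-free as a $G$-module.

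Combining the two steps, $\bigwedge^{(k,k)}(V\oplus V)$ is a bihomogeneous component of $\bigwedge(V\oplus V)$ that is not multiplicity-free for $G$; since its bidegree is detected by the $\CC^*\times\CC^*$-weight, the offending multiplicity cannot be absorbed into a different bidegree and therefore survives in $\bigwedge(V\oplus V)$ as a $\CC^*\times\CC^*\times G$-module. It follows that $(\CC^*\times\CC^*\times G,\, V\oplus V)$ is not SMF.

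The only point requiring care, and the one I would regard as the crux, is the claim that the constituents of $A\otimes W$ and $W\otimes A$ genuinely add rather than cancel. This is immediate once one observes that they occupy complementary subspaces of $\bigwedge^k V \otimes \bigwedge^k V$, so their constituents contribute independently to the total multiplicity. It is precisely reducibility, as opposed to mere multiplicity-freeness of $\bigwedge^k V$, that guarantees the existence of a nonzero cross term $A\otimes W$: for an irreducible $\bigwedge^k V$ there is no such cross term, and the argument correctly produces no contradiction in that case.
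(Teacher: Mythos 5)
Your proof is correct and takes essentially the same approach as the paper: the paper's own proof splits $\bigwedge^k V = V_1 \oplus V_2$ and notes that $\bigwedge^k V \otimes \bigwedge^k V = V_1\otimes V_1 \oplus V_2\otimes V_2 \oplus 2\,(V_1\otimes V_2)$ fails to be multiplicity-free, then invokes the torus bigrading exactly as you do. Your extra care (choosing $A$ irreducible, spelling out the flip isomorphism and the weight-space separation) only elaborates the same two-line argument.
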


\begin{proof}
Assume without loss of generality that $\bigwedge^k V = V_1 \oplus V_2$. Then, 

\[ \bigwedge^k V \otimes \bigwedge^k V = V_1\otimes V_1 \oplus V_2\otimes V_2 \oplus 2 \cdot (V_1\otimes V_2) \]

\noindent is not multiplicity-free for $G$. Hence, $\bigwedge (V \oplus V)$ is not multiplicity-free for $(\CC^*)^2 \times G$.
\end{proof}

The completeness of the list in Theorem~\ref{thm:SMF_red} is shown. We start with considering saturated indecomposable representations with two nontrivial submodules and identify them with their representation diagrams. The first three types being discussed are
\[ 
 {\rm (\Lambda)}
\begin{picture}(60,40)  
 \put(25,30){\circle*{5}}
 \put(10,0){\circle*{5}} \put(40,0){\circle*{5}}  
 \put(10,0){\line(1,2){15}} \put(40,0){\line(-1,2){15}}
\end{picture}, {\rm (N)}
\begin{picture}(60,40)  
 \put(10,30){\circle*{5}} \put(40,30){\circle*{5}} 
 \put(10,0){\circle*{5}} \put(40,0){\circle*{5}}  
 \put(10,0){\line(0,1){30}} \put(40,0){\line(0,1){30}} \put(10,30){\line(1,-1){30}}
\end{picture}
{\rm ~and~(X)} 
\begin{picture}(60,40)  
 \put(10,30){\circle*{5}} \put(40,30){\circle*{5}} 
 \put(10,0){\circle*{5}} \put(40,0){\circle*{5}}  
 \put(10,0){\line(0,1){30}} \put(40,0){\line(0,1){30}} \put(10,30){\line(1,-1){30}} \put(10,0){\line(1,1){30}}
\end{picture}.
\]

In the first case, by Proposition~\ref{prop:subgraph}, a SMF representation is the direct sum of two irreducible SMF representations $V$ and $W$. Clearly, they both must satisfy the conditions of Lemma~\ref{lemm:B1} and~\ref{lemm:B2}. All necessary counter-examples (recall also Lemma~\ref{lemm:levi} concerning Levi factors) are listed in Table~\ref{tab:NonSMF_A}. 

\begin{table}
\[\begin{array}{| l l || l l |} \hline
\bigwedge^{(2,5)}\SL_4 \oplus S^2 \SL_4   & 2[1,2,1]     & \bigwedge^{(1,2)}S^3 \SL_2 \oplus S^4 \SL_2  & 2[3]  \\
\bigwedge^{(1,3)}\SL_2 \oplus S^5 \SL_2   & 2[4]         & \bigwedge^{(1,2)}S^3 \SL_2 \oplus S^5 \SL_2  & 2[3]  \\
\bigwedge^{(1,3)}\SL_2 \oplus S^6 \SL_2   & 2[5]         & \bigwedge^{(1,2)}S^3 \SL_2 \oplus S^6 \SL_2  & 2[3]  \\ 
\bigwedge^{(1,3)}\SL_3 \oplus S^3 \SL_3   & 2[3,2]       & \bigwedge^{(1,2)}S^4 \SL_2 \oplus S^5 \SL_2  & 3[4]   \\
\bigwedge^{(2,3)}\SL_4 \oplus \bigwedge^2 \SL_4 & 2[1,0,1]& \bigwedge^{(1,2)}S^4 \SL_2 \oplus S^6 \SL_2  & 2[2]   \\
\bigwedge^{(2,2)}\SL_6 \oplus \bigwedge^3 \SL_6 & 2[0,1,0,0,0] & \bigwedge^{(1,2)}S^5 \SL_2 \oplus S^6 \SL_2 & 2[3] \\
\bigwedge^{(1,2)}S^2\SL_2 \oplus S^3\SL_2 & 2[2] & \bigwedge^{(1,3)}\bigwedge^2\SL_4\oplus\bigwedge^2\SL_4 & 2[1,0,1]  \\ 
\bigwedge^{(1,2)}S^2\SL_2\oplus S^4\SL_2 & 2[4] & \bigwedge^{(1,2)}\bigwedge^2\SL_6 \oplus \bigwedge^3\SL_6 & 2[0,1,0,0,0] \\
\bigwedge^{(1,2)}S^2\SL_2\oplus S^5\SL_2        & 2[2]     & \bigwedge^{(2,2)}\Sp_4 \oplus\bigwedge^2_0 \Sp_4 & 2[2,0]   \\
\bigwedge^{(1,2)}S^2\SL_2\oplus S^6\SL_2        & 2[4]     & \bigwedge^{(2,2)}\Sp_6\oplus\bigwedge^3_0\Sp_6   & 2[0,1,0] \\
\bigwedge^{(2,2)}S^2\SL_4\oplus\bigwedge^2\SL_4 & 2[2,1,0] & \bigwedge^{(2,3)}\Spin_8 \oplus \Delta^+_8 & 2[1,0,1,0]  \\
\bigwedge^{(2,2)}S^2\SL_6\oplus\bigwedge^3\SL_6& 2[2,0,1,0,1] & &  \\ \hline 
\end{array}\]
 \caption{Non-SMF modules of type ($\Lambda$)}
	\label{tab:NonSMF_A}
\end{table}

SMF representations of the type (N) must contain a subgraph corresponding to $(\alpha) - (\rho)$ in Theorem~\ref{thm:SMF_red}. Furthermore, the submodule on which both simple factors of $G$ act nontrivially must be SMF. The
labels in Table~\ref{tab:NonSMF_N} indicate the sorting by $(\Lambda)$-subgraphs.

\begin{table}
\[\begin{array}{| l l || l l |} \hline
(\alpha)                                                    &            & (\beta) &              \\ \hline
\bigwedge^{(1,3)}\SL_3\oplus \SL_3\otimes S^2\SL_2&2[1,0][2]&\bigwedge^{(1,4)}S^2\SL_2\oplus \SL_2\otimes \bigwedge^2\SL_4   & 2[2][1,0,1]   \\
\bigwedge^{(1,4)}\SL_2 \oplus \SL_2\otimes \bigwedge^2\SL_4 & 2[3][1,0,1] & \bigwedge^{(1,3)}S^2\SL_2\oplus \SL_2\otimes \Sp_4    & 2[1][1,0]     \\
\bigwedge^{(1,3)}\SL_2\oplus \SL_2\otimes\Sp_4 & 2[2][1,0]& \bigwedge^{(1,3)}\SL_2 \oplus S^2\SL_2\otimes\SL_2  & 2[3][1]  \\
\hline
(\delta) &                                                        & (\epsilon)  &    \\ \hline
\bigwedge^{(2,2)}S^3\SL_2\oplus\SL_2\otimes\SL_2 & 2[2][0] & \bigwedge^{(2,2)}S^4\SL_2\oplus\SL_2\otimes \SL_2 & 2[4][0] \\
\bigwedge^{(2,2)}S^3\SL_2\oplus\SL_2\otimes S^2\SL_2 & 2[2][2] & \bigwedge^{(2,2)}S^4\SL_2\oplus\SL_2\otimes S^2\SL_2 & 2[4][2] \\   
\bigwedge^{(2,2)}S^3\SL_2 \oplus \SL_2\otimes \bigwedge^2 \SL_4   & 2[2][1,0,1] & \bigwedge^{(2,2)}S^4\SL_2 \oplus \SL_2\otimes \bigwedge^2 \SL_4  & 2[4][1,0,1]  \\ 
\bigwedge^{(1,3)}S^3\SL_2\oplus\SL_2\otimes\Sp_4 &2[2][1,0] & \bigwedge^{(1,3)}S^4\SL_2\oplus\SL_2\otimes\Sp_4 & 2[3][1,0] \\  
\hline
(\gamma)  &                                            & (\rho)   &  \\ \hline
\bigwedge^{(2,3)}S^2\SL_3\oplus \SL_3\otimes\SL_2      & 2[2,1][1]  & \bigwedge^{(1,2)}S^2 \SL_2 \oplus S^2\SL_2 \otimes \SL_2   & 2[2][0]   \\   
\bigwedge^{(1,3)}S^2\SL_3\oplus \SL_3\otimes S^2\SL_2  & 2[2,0][2] & &    \\ \hline
\end{array} \] 
\caption{Non-SMF modules of type (N)}
	\label{tab:NonSMF_N}
\end{table}

A graph of type (X) contains two (N) subgraphs (with one of those in mirrored fashion). So in this case we must consider representations with building blocks $(\eta), (\kappa), (\vartheta)$ or $(\sigma)$ . The only possibilities that match together are two times $(\eta)$, $(\eta)$ and $(\kappa)$ or, taking into account $\SO_3 \sim S^2\SL_2$, two times $(\vartheta)$ for $n=1$. But one can easily compute by the Pieri rule, that in each case $\bigwedge^{(1,2)}$ contains multiplicities since
\[ \bigwedge^2 \SL_2\otimes \SL_2 = [0,2] + [2,0] {\rm ~and~} \bigwedge^2 \SL_2\otimes S^2\SL_2 = [0,0] + [0,4] + [2,2]. \] 

Now we turn to diagrams of type (W) and deduce from the above considerations that a SMF diagram of this type has to be of the form
\[ \repW{G}{\SL_m}{H}{U}{\CC^m}{\CC^m}{X}, \]

where $G$ and $H$ are isomorphic to $\SL_n$ or $\SO_{2k+1}$ for $k,n\in\NN$. Hence it must contain subdiagrams 
of type $(\eta)$ or $(\vartheta)$ and the only combination which is distinct from those in Theorem~\ref{thm:SMF_red} corresponds to the action of $\SL_n\times \SL_m \times \SL_k {\rm~on~} \CC^{n(*)}\otimes\CC^m \oplus \CC^m\otimes\CC^{k(*)},~ m>2$. But for $m=3$, $n=k=2$ one has $[1,1,1,1]$ twice in $\bigwedge^{(3,3)}$.

In order to complete the observations of indecomposable representations with two submodules, we have to check the representations \[ S^k\SL_2 \oplus \SL_2\otimes\SL_2\otimes\SL_2 \]
for $k = 1,\dots, 4$, since $\SL_2\otimes \SO_4$ is SMF. But $\bigwedge^3 \SL_2^{\otimes 3}$ decomposes as $[1,1,1] + [1,1,3] + [1,3,1] + [3,1,1]$ and hence $\bigwedge^{(1,3)}$ contains always multiplicities. \\

Now consider representations with three irreducible submodules. Since all of the following modules for $\SL_2$ contain multiplicities in $\bigwedge^{(1,1,1)}$ (as listed below), there is no SMF representation for groups with only one simple factor.

\[ \begin{array}{| l l || l l  |} \hline
\SL_2\oplus\SL_2\oplus \SL_2             & 2[1]  & S^2\SL_2\oplus S^2\SL_2\oplus S^2\SL_2   & 2[4]   \\
\SL_2\oplus\SL_2\oplus S^2\SL_2          & 2[2]  & \SL_2\oplus\SL_2\oplus S^3\SL_2          & 2[3]   \\
\SL_2\oplus S^2\SL_2\oplus S^2\SL_2      & 2[3]  & \SL_2\oplus\SL_2\oplus S^4\SL_2          & 2[4]   \\ \hline
\end{array} \] \bigskip

If there are two simple factors, an indecomposable representation is given by a diagram of type (N) by adding further bottom vertex which must be connected to at least one top vertex. So either there will be a simple factor operating on three irreducible submodules, which is never SMF by the above, or the diagram looks like
\[ \begin{picture}(100,50)
 \put(15,40){\circle*{5}}   \put(75,40){\circle*{5}}
 \put(15,10){\line(0,1){30}}  \put(75,10){\line(0,1){30}}
 \put(15,40){\line(1,-1){30}}  \put(45,10){\line(1,1){30}} 
 \put(15,10){\circle*{5}}  \put(45,10){\circle*{5}}      \put(75,10){\circle*{5}}
\end{picture}.\]
The simple factors of the group must be isomorphic to $\SL_2$, by comparing this to diagrams of type (N). The eligible modules are $\SL_2 \oplus \SL_2\otimes\SL_2 \oplus \SL_2$, $\SL_2 \oplus \SL_2\otimes\SL_2 \oplus S^2\SL_2$ and $S^2\SL_2 \oplus \SL_2\otimes\SL_2 \oplus S^2\SL_2$. But there is a submodule of multiplicity two in $\bigwedge^4$, namely $[1,1]$, $[1,2]$ and $[2,2]$ respectively. \\

Since the diagram of an module with more than two irreducible components contains always some of the above subdiagrams, there are no further SMF representations and therefore the proof of Theorem~\ref{thm:SMF_red} is complete.

\section*{Acknowledgements}
This work is part of the author's PhD research conducted at University of Erlangen-N\"urnberg under the supervision of Friedrich Knop. The author would like to thank him for his support and suggestions. Furthermore, he thanks Ron C. King, Oksana Yakimova and Guido Pezzini for helpful discussions. Finally, he is indebted to the referees for carefully reading the manuscript. The author appreciates their numerous valuable remarks and the resulting improvements of this article. Some of the research was carried out during a stay at ``Centro di Ricerca Mathematica Ennio De Giorgi'' in Pisa.

\end{document}